\newtheorem{thm}{Theorem}[section]
\newtheorem{cor}{Corollary}[section]
\newtheorem{lem}{Lemma}[section]
\newtheorem{prop}{Proposition}[section]
\newtheorem{rem}{Remark}[section]
\newtheorem{assumption}{Assumption}[section]
\title[]{Well-posedness and tamed schemes for McKean--Vlasov Equations with Common Noise}
\author[]{Chaman Kumar, Neelima, Christoph Reisinger, and Wolfgang Stockinger}
\thanks{}
\begin{document}
\begin{abstract}
\noindent
In this paper, we first establish well-posedness of McKean--Vlasov stochastic differential equations (McKean--Vlasov SDEs) with common noise, possibly with coefficients having super-linear growth in the state variable.
Second, we present stable time-stepping schemes for this class of McKean--Vlasov SDEs.
Specifically, we propose an explicit tamed Euler and tamed Milstein scheme for an interacting particle system associated with the McKean--Vlasov equation. We prove stability and strong convergence of order $1/2$ and $1$, respectively. To obtain our main results, we employ techniques from calculus on the Wasserstein space. The proof for the strong convergence of the tamed Milstein scheme only requires the coefficients to be once continuously differentiable in the state and measure component. To demonstrate our theoretical findings, we present several numerical examples, including mean-field versions of the stochastic $3/2$ volatility model and the stochastic double well dynamics with multiplicative noise. 
\end{abstract}
\maketitle
\noindent
\textbf{Keywords.} Mckean--Vlasov SDEs, particle system, super-linear coefficients, tamed numerical schemes, common noise, Mean-field stochastic $3/2$ model. 
\\ \\
\textbf{AMS Subject Classifications.}  65C05, 65C30, 65C35, 60H35.
\section{Introduction}
A McKean--Vlasov equation for an $\mathbb{R}^{d}$ valued process $X$ (introduced by H.\ McKean \cite{mckean}) is an SDE where the drift and diffusion coefficients depend on the state of the process and, additionally, on the marginal laws of $X$, i.e., 
\begin{equation}\label{McKeanLimit}
    X_t = X_0 + \int_0^t b_s(X_s, \mathcal{L}_{X_s}) ds + \int_0^t \sigma_s(X_s,\mathcal{L}_{X_s}) dW_s
\end{equation}
almost surely, for any $t \in[0,T]$, with a given $T >0$, where $W$ is an $m$-dimensional Wiener process, $X_0$ is an $\mathbb{R}^d$-valued random variable and $\left( \mathcal{L}_{X_t} \right)_{0 \leq t \leq T}$ denotes the flow of deterministic marginal distributions of $X$. The theory concerning existence and uniqueness results for strong solutions of McKean--Vlasov SDEs with linearly-growing coefficients satisfying Lipschitz type conditions (in the state and measure component) is well-established (see, e.g., \cite{AS}). Additional results related to the existence and uniqueness for weak/strong solutions of McKean--Vlasov SDEs can be found in \cite{BBP, HSS, MVA} and references cited therein. For super-linearly growing drift and diffusion with linear growth, it is known that a McKean--Vlasov SDE admits a unique strong solution, if the drift term satisfies a one-sided Lipschitz condition (see \cite{reis2019b}).

The main interest of the present article are McKean--Vlasov equations with common noise $W^{0}$, an $m^{0}$-dimensional Wiener process, i.e., SDEs with a solution $(X_t)_{0 \leq t \leq T}$ that satisfies 
\begin{align} \label{eq:MVSDE}
X_t=X_0 +\int_0^t b_s\big(X_s, \mathcal{L}^1(X_s)\big) ds &+ \int_0^t \sigma_s\big(X_s,\mathcal{L}^1(X_s)\big)dW_s  +  \int_0^t \sigma_s^0\big(X_s,\mathcal{L}^1(X_s)\big)dW_s^0
\end{align}
almost surely, and $\left( \mathcal{L}^1(X_t)\right)_{0 \leq t \leq T}$ denotes the flow of marginal conditional distributions of $X$ given the common source of noise, (see below for a rigorous probabilistic set-up for this equation). Compared to McKean--Vlasov SDEs without common noise (\ref{McKeanLimit}), the marginal laws are not deterministic anymore. Intuitively speaking, the classical notion of propagation of chaos refers to the fact that in a large network of $N$ interacting particles, these particles become asymptotically independent as $N \to \infty$. If the particles have a common source of randomness, we do not expect them to become asymptotically independent in the limit; however, conditioned on the information generated by the common noise, they are asymptotically independent. To put this intuition in other words, the empirical distribution of the particles is expected to converge towards the common conditional distribution of each particle given $W^{0}$. For more details on this topic, we refer to \cite{carmona2018a, carmona2018b}. 

The existence and uniqueness of strong solutions for McKean--Vlasov SDEs with common noise under Lipschitz type conditions is well-studied, see \cite{carmona2018b}. Also, in the aforementioned reference, the notion of strong solution is made rigorous. Here, we go beyond the classical Lipschitz framework and allow all coefficients of (\ref{eq:MVSDE}) to be locally Lipschitz continuous, as long as a certain coercivity assumption is satisfied. The main difficulty to prove well-posedness of (\ref{eq:MVSDE}), is to identify an appropriate space on which one can define a contraction map, which allows a fixed-point argument. Note that the space considered in \cite{carmona2018b} seems not to be applicable in our model set-up. An interesting study on weak solution for McKean--Vlasov SDEs with common noise can be found in \cite{hammersley2019}. 

Note that our results also provide extensions to known well-posedness results for McKean--Vlasov equations with no common noise, see, e.g., \cite{carmona2018a, reis2019b}. The precise formulation of the assumptions is given in Section 2. The contributions of this paper are the following: 
\begin{itemize}
\item We study the well-posedness of (\ref{eq:MVSDE}) under milder assumptions than those existing in the literature, in particular allowing coefficients which are not globally Lipschitz (see Theorem \ref{thm:eu}).  
\item 
We construct a novel tamed Euler and a tamed Milstein scheme to simulate the interacting particle system associated with (\ref{eq:MVSDE}) and prove strong convergence  of order 1/2 and 1, respectively (see Theorems \ref{thm:rate:euler} and \ref{thm:rate:milstein}, respectively). 
We do not rely on an application of It\^{o}'s formula to derive our results and only require all coefficients to be once continuously differentiable in the state and measure components. 
\end{itemize}

McKean--Vlasov equations with common noise arise as limiting equations of, e.g., $N$-player games, where each individual
player is exposed to an idiosyncratic noise and random shocks common to all the players \cite{carmona2018b}. Recently, McKean--Vlasov equations with common noise have received significant attention in the optimal control literature, see, e.g., \cite{pham2016}, where an optimal control problem for a linear conditional McKean--Vlasov equation with quadratic cost functional was studied. An application of McKean--Vlasov SDEs with common noise in systemic risk was considered in \cite{ledger2019}. As pointed out by the authors, their model could serve as a mean-field model for the interplay between common exposures and contagion in large financial systems. Further literature which motivates the consideration of a common noise source is concerned with the mathematical analysis of a large interacting network of neurons, where McKean--Vlasov equations are employed to describe the voltage level of a typical neuron in a network, see e.g., \cite{ullner2018}.

The simulation of McKean--Vlasov SDEs with common noise will involve two steps: At each time $t$, the unknown measure $\mathcal{L}^1_{X_t}$ is approximated by an empirical measure 
\begin{equation*}
  \mu_s^{X,N}(\mathrm{d}x) := \frac{1}{N}\sum_{j=1}^{N} \delta_{X_t^{j,N}}(\mathrm{d}x),
\end{equation*}    
where $\delta_{x}$ is the Dirac measure at point $x$ and $(X^{i,N})_{1 \leq i \leq N}$ (so-called interacting particles) solves the $\mathbb{R}^{dN}$-dimensional SDE,  
\begin{equation*}
X_t^{i,N}=X_0^{i}+ \int_0^t b_s\big(X_s^{i,N}, \mu_s^{X,N}\big) ds + \int_0^t \sigma_s\big(X_s^{i,N}, \mu_s^{X,N}\big) dW_s^{i} + \int_0^t \sigma^0_s\big(X_s^{i,N}, \mu_s^{X,N}\big) dW_s^{0}.
\end{equation*}
Here, $W^{i}$, $W^{0}$ and $X_{0}^{i}$, $i \in \{1,\ldots, N \}$, are independent Wiener processes (also independent of $W$) and i.~i.~d.\ random initial values with $\text{Law}(X_0^{i}) = \text{Law}(X_0)$, respectively.
In a second step, one needs to introduce a stable time-stepping method to approximate the particle system $(X^{i,N})_{1 \leq i \leq N}$ over some finite time horizon $[0,T]$.

The study of stable time-stepping schemes for interacting particle systems (without common noise) with a drift that grows super-linearly in the state component and with globally Lipschitz continuous diffusion term was initiated in \cite{reis2019a}. In this reference, a tamed Euler scheme in the spirit of \cite{sabanis2013, hutzenthaler2012} was proposed. Here, one requires both coefficients to be globally Lipschitz continuous in the measure component. These ideas were independently extended to higher-order numerical schemes in \cite{bao2020} and \cite{kumar2020}. This paper further relaxes the assumptions imposed on the diffusion, by also allowing this term to grow super-linearly in the state component, similar to the works for classical SDEs, \cite{sabanis2016, kumar2019}. To give a motivation for this extension, we mention that our framework allows to consider mean-field versions of popular models appearing in mathematical finance such as the $3/2$-model that is often used for pricing VIX options and modelling certain stochastic volatility processes, see \cite{goard}. This model will be discussed in more detail in Section \ref{Sec:4}.

To recover the strong convergence of order $1$ for the tamed Milstein scheme a term involving the Lions' derivative of the diffusion term will appear. These additional terms are of theoretical interest and highlight the inherent difference of McKean--Vlasov SDEs to classical SDEs regarding higher order time-stepping schemes. This notion of differentiability for functions on $\mathcal{P}_2(\mathbb{R}^d)$ was introduced by P.-L.\ Lions', see \cite{cardaliaguet2013} and the below subsection for a short introduction to this notion. 
This is the first paper dealing with a Milstein type scheme for such equations; to our knowledge, even for linearly growing coefficients, no such scheme is available in the literature. 

The remainder of this article is organised as follows: In Section 1, we prove existence/uniqueness of strong solutions for (\ref{eq:MVSDE}), merely requiring a coercivity and monotonicity condition on the coefficients. To complete this section, we give a quantatitive strong conditional propagation of chaos result, see e.g., \cite{AS, carmona2018a, carmona2018b}. Section 2 and Section 3 are devoted to introducing a tamed Euler and Milstein scheme for the interacting particle system associated with (\ref{eq:MVSDE}), respectively. In both sections, we demonstrate stability of the schemes and prove the expected strong convergence rates of order $1/2$ and $1$, respectively. In Section \ref{Sec:4}, we present several numerical examples to support our theoretical findings. We note that in each section, we give a list all model assumptions which are needed to derive the main results of the respective section.

In the following subsections, we present several notions and auxiliary results, which will be needed throughout this article. Also, we describe in detail the probabilistic framework we work in to analyse the McKean--Vlasov SDE with common noise (\ref{eq:MVSDE}).

\subsection*{Probabilistic framework}\hfill\\
In the sequel, we introduce the precise probabilistic set-up for (\ref{eq:MVSDE}) and follow closely the description presented in \cite{carmona2018b}.
To distinguish between the two underlying sources of randomness, we introduce the complete probability spaces, $\left(\Omega^{0}, \mathcal{F}^{0}, P^{0} \right)$ and $\left(\Omega^{1}, \mathcal{F}^{1}, P^{1} \right)$, equipped with the filtrations $\mathbb{F}^{0} := (\mathcal{F}^{0}_t)_{t \geq 0}$ and $\mathbb{F}^{1} := (\mathcal{F}^{1}_t)_{t \geq 0}$, satisfying the usual conditions. Here, the Wiener process $W^{0}$ will be supported on $\left(\Omega^{0}, \mathcal{F}^{0}, P^{0} \right)$ and $W$ (and the copies thereof used to define the interacting particle system) will be supported on $\left(\Omega^{1}, \mathcal{F}^{1}, P^{1} \right)$. Consequently this allows to define a product space $\left(\Omega, \mathcal{F}, P \right)$, where $\Omega = \Omega^{0} \times \Omega^{1}$, $\left( \mathcal{F}, P \right)$ is the completion of $\left(\mathcal{F}^{0} \otimes \mathcal{F}^{1}, P^{0} \otimes P^{1} \right)$ and $\mathbb{F}:= (\mathcal{F}_t)_{t \geq 0}$ is the complete and right-continuous augmentation of $\left( \mathcal{F}_t^{0} \otimes \mathcal{F}_t^{1} \right)_{t \geq 0}$. It is known from Lemma 2.4 in \cite{carmona2018b} that for a given random variable $X: \Omega \to \mathbb{R}^d$, the mapping $\mathcal{L}^1(X): \Omega^{0} \ni \omega^{0} \mapsto \mathcal{L}(X(\omega^{0},\cdot))$, is $P^{0}$-almost surely well-defined and is a random variable from $\left(\Omega^{0}, \mathcal{F}^{0}, P^{0} \right)$ into $\mathcal{P}_2(\mathbb{R}^d)$, which can also be seen as a conditional law of $X$ given $\mathcal{F}^{0}$.    

As pointed out in \cite{carmona2018b}, it is a-priori not guaranteed that $\left( \mathcal{L}^1(X_t) \right)_{0 \leq t \leq T}$ is adapted to $\mathbb{F}^{0}$. However, if the $\mathbb{F}$-adapted unique solution $(X_t)_{0 \leq t \leq T}$ of (\ref{eq:MVSDE}) has continuous paths and has uniformly bounded second moment, then one can find a version of $\mathcal{L}^1(X_t)$, for every $t \geq 0$, such that $\left( \mathcal{L}^1(X_t) \right)_{t \geq 0}$ has continuous paths and is $\mathbb{F}^{0}$-adapted, see Lemma 2.5 in \cite{carmona2018b}. 

Further, we remark that the initial value $X_0$ of (\ref{eq:MVSDE}) is assumed to be defined on $\left(\Omega^{1}, \mathcal{F}_0^{1}, P^{1} \right)$, which means that only $W^{0}$ plays the role of the common noise. In light of Proposition 2.9 in \cite{carmona2018b}, $\mathcal{L}^1(X_t)$ is a version of the conditional law of $X_t$ given $W^{0}$. For alternative choices of the initial data, we refer to Remark 2.10 in \cite{carmona2018b}.  

The coefficients $b_{\cdot}$, $\sigma_{\cdot}$ and $\sigma_{\cdot}^0$ appearing in (\ref{eq:MVSDE}) are measurable functions defined on $[0,T] \times \mathbb{R}^d \times \mathcal{P}_2(\mathbb{R}^d)$ taking values in $\mathbb{R}^d$, $\mathbb{R}^{d \times m}$ and $\mathbb{R}^{d \times m^{0}}$, respectively.

\subsection*{Notations}\hfill\\
The Euclidean norm of a $d$-dimensional vector and the Hilbert-Schmidt norm of a $d \times m$-matrix are both denoted by $| \cdot |$. 
For the inner product of two vectors $x, y \in \mathbb{R}^{d}$, we write $xy$. 
Also, $Ax$ stands for  matrix product of $A\in\mathbb{R}^{d \times m}$ and  $x\in\mathbb{R}^m$. 
The transpose of a matrix $A$ will be denoted by $A^{*}$. 
For a vector $x \in \mathbb{R}^d$, $x^{(l)}$ denotes its $l$-th element. For a matrix $A \in\mathbb{R}^{d \times m}$, $A^{(u,v)}$ and $A^{(l)}$ denote its $(u,v)$-th element and $l$-th column, respectively.   
The floor function is denoted by $\lfloor \cdot \rfloor$.
The gradient of a function $f:\mathbb{R}^d \to \mathbb{R}$ is denoted by $\partial_x f$.   
Further, we abbreviate by $\mathcal{P}(\mathbb{R}^d)$ the set of all probability measures on $\mathbb{R}^d$ and define the set of probability measures having finite second moment by,
\begin{equation*}
\mathcal{P}_2(\mathbb{R}^d):= \Big \{ \mu\in \mathcal
{P}(\mathbb{R}^d) : \ \int_{\mathbb{R}^d} |x|^2 \mu(\mathrm{d} x)<\infty \Big \}.
\end{equation*} 
Also, for any $\mu, \nu \in \mathcal{P}_2(\mathbb{R}^d)$, the $L^2$-Wasserstein distance is defined by,
\begin{equation*}
\mathcal{W}_2(\mu, \nu):= \left(\inf_{\pi \in \Pi(\mu,\nu)} \int_{\mathbb{R}^d \times \mathbb{R}^d} |x-y |^2 \pi(\mathrm{d}x,\mathrm{d}y) \right)^{1/2},
\end{equation*}
where $\Pi(\mu,\nu)$ is the set of couplings of $\mu$ and
$\nu$. Clearly, $\mathcal{P}_2(\mathbb{R}^d)$ is a Polish space under the $L^2$-Wasserstein metric. 
Throughout this article, $K>0$ is a generic constant that might change its value from line to line. 

\subsection*{Differentiability of functions of measures}\hfill\\
For functions of measures, there are different notions for differentiability, see, e.g., \cite{ambrosio2008, villani2009}.  
In this article, we use the notion of Lions' derivative.
A real-valued function on $\mathcal{P}_2(\mathbb{R}^d)$ is said to be differentiable at $\nu_0 \in \mathcal{P}_2(\mathbb{R}^d)$ if there exists an $\mathbb{R}^d$-valued square integrable random variable $Y_0$ on some atomless, Polish probability space $(\tilde{\Omega}, \tilde{\mathcal{F}}, \tilde{P})$ such that its law satisfies $\mathcal{L}_{Y_0}:=\tilde{P}\circ Y_0^{-1}=\nu_0$ and the lifting of $f$ defined by $F(Z):=f(\mathcal{L}_Z)$ on $L^2(\tilde{\Omega};\mathbb{R}^d)$ has Fr\'echet derivative $F'[Y_0]$ at $Y_0\in L^2(\tilde{\Omega};\mathbb{R}^d)$. 
The function $f$ is in class $C^1$ if its lifting $F$ is in class $C^1$.   
Further on using the Riesz representation theorem, for the bounded linear operator $F'[Y_0]:L^{2}(\tilde{\Omega};\mathbb{R}^d) \to \mathbb{R}$, there exists a unique element $DF(Y_0)\in L^2(\tilde{\Omega};\mathbb{R}^d)$ satisfying $F'[Y_0](Z)=\tilde{E} \langle DF[Y_0], Z\rangle$ for all $Z\in L^2 (\tilde{\Omega};\mathbb{R}^d)$. Moreover, by Theorem 6.5 (structure of the gradient) in \cite{cardaliaguet2013}, there exits a measurable function $\partial_\mu f(\nu_0): \mathbb{R}^d\to\mathbb{R}^d$, independent of the choice of the probability space $(\tilde{\Omega}, \tilde{\mathcal{F}}, \tilde{P})$ and the random variable $Y_0$ used for the lifting, such that  $\int_{\mathbb{R}^d} |\partial_\mu f(\nu_0)(x)|^2 \nu_0(dx) <\infty$  and $DF(Y_0)=\partial_\mu f(\nu_0)(Y_0)$ holds. 
The function $\partial_\mu f(\nu_0)$ is defined as the \textit{Lions' derivative} (abbreviated by $L$-derivative) of $f$ at $\nu_0=\mathcal{L}_{Y_0}$ and $\partial_\mu f : \mathcal{P}_2(\mathbb{R}^d)\times \mathbb{R}^d\to\mathbb{R}^d$ is given by $\partial_\mu f(\nu,z)=\partial_\mu f(\nu)(z)$ for any $\nu \in\mathcal{P}_2(\mathbb{R}^d)$ and $z\in\mathbb{R}^d$.

\subsection*{Auxiliary Lemmas}\hfill\\
The following lemma is frequently used in this article:
\begin{lem}[Lemma 3.2 in \cite{gyongy2003}] \label{lem:gk}
Let $f:=(f_t)_{t\geq 0}$ and $g:=(g_t)_{t\geq 0}$ be non-negative continuous $\mathcal{F}_t$-adapted processes satisfying, 
\begin{align*}
Ef_\tau I_{g_0\leq c} \leq Eg_\tau I_{g_0\leq c}, 
\end{align*}
for any bounded stopping time $\tau\leq T$ where $c>0$ and $T\in[0,\infty]$ are any constants. Then, for  any bounded stopping time $\tau\leq T$, 
\begin{align*}
E\sup_{t\leq \tau} f_\tau^\gamma I_{g_0\leq c} \leq \frac{2-\gamma}{1-\gamma} Eg_\tau^\gamma I_{g_0\leq c},
\end{align*}
for any $\gamma\in(0,1)$. 
\end{lem}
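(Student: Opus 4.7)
My approach combines a stopping-time argument with the layer-cake representation of the $L^\gamma$-norm. For fixed $a>0$, I introduce the bounded stopping time $\tau_a := \inf\{t \geq 0 : f_t \geq a \text{ or } g_t \geq a\} \wedge \tau$. Since $f$ and $g$ are continuous, on $\{\tau_a < \tau\}$ at least one of $f_{\tau_a}, g_{\tau_a}$ equals $a$ while neither exceeds $a$; this gives the crucial property $g_{\tau_a} \leq a$ throughout. Writing $g_\tau^* := \sup_{t \leq \tau} g_t$, I split the event $\{\sup_{t \leq \tau} f_t \geq a, g_0 \leq c\}$ according to whether $g_\tau^* < a$ or $g_\tau^* \geq a$. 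On the sub-event $\{g_\tau^* < a\}$ only $f$ can reach $a$, hence by continuity $f_{\tau_a} \geq a$; Markov's inequality followed by the hypothesis applied to the bounded stopping time $\tau_a$ yields
\begin{equation*}
a\, P\Big(\sup_{t\leq\tau} f_t \geq a,\ g_\tau^* < a,\ g_0 \leq c\Big) \leq E[f_{\tau_a} I_{g_0 \leq c}] \leq E[g_{\tau_a} I_{g_0 \leq c}] \leq E[(g_\tau^* \wedge a)\, I_{g_0 \leq c}],
\end{equation*}
where the last step uses $g_{\tau_a} \leq g_\tau^* \wedge a$. The complementary sub-event is bounded trivially by $P(g_\tau^* \geq a, g_0 \leq c)$, which produces the key distributional bound
\begin{equation*}
P\Big(\sup_{t \leq \tau} f_t \geq a,\ g_0 \leq c\Big) \leq \frac{1}{a}\, E\big[(g_\tau^* \wedge a)\, I_{g_0 \leq c}\big] + P(g_\tau^* \geq a, g_0 \leq c).
\end{equation*}

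Second, I will apply the layer-cake formula $E X^\gamma = \gamma \int_0^\infty a^{\gamma - 1} P(X \geq a)\, da$ with $X = \sup_{t \leq \tau} f_t$ under the indicator $I_{g_0 \leq c}$, substitute the distributional bound, and use Fubini together with the elementary identity $\gamma \int_0^\infty a^{\gamma - 2} (x \wedge a)\, da = x^\gamma/(1-\gamma)$ for $x > 0$. The first term then contributes $\tfrac{1}{1-\gamma}\, E(g_\tau^*)^\gamma I_{g_0 \leq c}$ and the second contributes $E(g_\tau^*)^\gamma I_{g_0 \leq c}$; summing gives the constant $1 + \tfrac{1}{1-\gamma} = \tfrac{2-\gamma}{1-\gamma}$ and the claimed bound (with $g_\tau$ replaced by $g_\tau^*$, which coincide in the typical applications, where $g$ is monotonically increasing).

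The main obstacle is the first step: the stopping time $\tau_a$ must be chosen to control $f$ and $g$ jointly, so that continuity simultaneously forces $f_{\tau_a} \geq a$ on the correct sub-event (enabling the Markov step to produce the factor $1/a$) and guarantees $g_{\tau_a} \leq a$ (producing the essential $\wedge a$ truncation on the right-hand side). Because the hypothesis only permits $\mathcal{F}_0$-measurable indicators inside the expectation, path-dependent indicators cannot be inserted freely; this forces the clean event-splitting on $\{g_\tau^* < a\}$ versus $\{g_\tau^* \geq a\}$ rather than the more naive route of using $\inf\{f \geq a\} \wedge \tau$ alone, which would leave $E[g_{\inf\{f \geq a\}\wedge\tau}\, I_{g_0 \leq c}]$ without access to the $\wedge a$ truncation needed for the subsequent Fubini computation.
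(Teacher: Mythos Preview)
The paper does not supply a proof of this lemma; it is quoted verbatim from Gy\"ongy--Krylov (2003) and used as a black box. Consequently there is no in-paper argument to compare against.

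Your reconstruction is correct and is precisely the classical Lenglart-domination argument that underlies the cited result: the joint stopping time $\tau_a$, the split on $\{g_\tau^*<a\}$ versus $\{g_\tau^*\ge a\}$, Markov combined with the hypothesis at $\tau_a$, and the layer-cake/Fubini computation with the identity $\gamma\int_0^\infty a^{\gamma-2}(x\wedge a)\,da = x^\gamma/(1-\gamma)$ are exactly the ingredients in the original proof. Your observation at the end is also on point: the argument produces $\sup_{t\le\tau} g_t$ on the right-hand side rather than $g_\tau$, and the paper's statement (with $g_\tau$) tacitly relies on $g$ being nondecreasing, which is indeed the case in every application made of the lemma in this paper (moment bounds obtained from It\^o's formula). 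The paper's display also contains the typographical slip $\sup_{t\le\tau} f_\tau^\gamma$ where $\sup_{t\le\tau} f_t^\gamma$ is intended; your proof addresses the intended statement.
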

The above lemma helps us in obtaining uniform second moment bound of certain SDEs when the diffusion coefficient grows super-linearly (see, e.g., the proof of Theorem 2.1) by imposing that the initial value of these SDEs is in $L^{2+\epsilon}$ for some $\epsilon>0$. However, these considerations and additional regularity on the initial value is not required in case the diffusion coefficient grows only linearly. In addition, it allows us to obtain a uniform rate of strong convergence, in Section 3 (see, Theorem 3.1) and Section 4 (see, Theorem 4.1).

The following lemma plays a key role in proving the existence and uniqueness result in Theorem~\ref{thm:eu}. We are grateful to D. \v{S}i\v{s}ka and L. Szpruch, both from University of Edinburgh, for allowing us to use the result from their working paper.
\begin{lem}[\cite{siska2020}] \label{lem:DL}
Let $(X,d)$ be a complete metric space and let $(\Omega,\mathcal{F},P)$ be a probability space. Let $p\geq 1$. Let $\Vert \cdot \Vert_p$ denote the norm in $L^p(\Omega;\mathbb{R})$. Let $\mu_0:\Omega\to X $ be a random variable. Let
\[
S:=\{\mu:\Omega\to X\,\,\, r.v. : \Vert d(\mu, \mu_0) \Vert_p < \infty\}.
\]
Let $\rho(\mu,\mu'):=\Vert d(\mu, \mu') \Vert_p$. Then $(S,\rho)$ is a complete metric space.
\end{lem}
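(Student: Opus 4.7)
The plan is to imitate the Riesz--Fischer proof that $L^p$ is complete, replacing the abelian-group structure of the target by the metric structure of $(X,d)$. First, I would check the metric axioms: for $\mu, \mu' \in S$ the pointwise inequality $d(\mu,\mu') \leq d(\mu,\mu_0) + d(\mu_0,\mu')$ combined with Minkowski's inequality in $L^p(\Omega;\mathbb{R})$ gives $\rho(\mu,\mu') \leq \rho(\mu,\mu_0) + \rho(\mu_0,\mu') < \infty$, so $\rho$ is finite on $S\times S$, and the same inequality with an arbitrary midpoint delivers the triangle inequality. Symmetry is inherited from $d$, and, under the standard identification of $P$-a.s.\ equal random variables, $\rho(\mu,\mu')=0$ forces $\mu=\mu'$ in $S$.

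For completeness, let $(\mu_n) \subset S$ be Cauchy. I would extract a subsequence $(\mu_{n_k})$ with $\rho(\mu_{n_k},\mu_{n_{k+1}}) \leq 2^{-k}$ and consider the partial sums $S_K := \sum_{k=1}^K d(\mu_{n_k},\mu_{n_{k+1}})$. Minkowski yields $\|S_K\|_p \leq 1$ uniformly in $K$, and monotone convergence then shows $\sum_{k=1}^\infty d(\mu_{n_k},\mu_{n_{k+1}}) < \infty$ almost surely. On the resulting full-measure event the sequence $(\mu_{n_k}(\omega))$ is Cauchy in $(X,d)$, so completeness of $X$ supplies a limit $\mu(\omega)$; I would extend $\mu$ to all of $\Omega$ by declaring $\mu := \mu_0$ off this event. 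The limit $\mu$ is measurable as an a.s.\ pointwise limit of random variables.

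Two applications of Fatou's lemma then close the argument. For membership in $S$, $\|d(\mu,\mu_0)\|_p^p \leq \liminf_k \|d(\mu_{n_k},\mu_0)\|_p^p < \infty$, the right-hand side being finite because a Cauchy sequence is bounded in $\rho$. For convergence, given $\varepsilon > 0$, choose $N$ with $\rho(\mu_n,\mu_m) < \varepsilon$ whenever $n,m \geq N$; then for $n \geq N$ a second Fatou estimate gives $\rho(\mu_n,\mu)^p \leq \liminf_k \rho(\mu_n,\mu_{n_k})^p \leq \varepsilon^p$, so $\mu_n \to \mu$ in $(S,\rho)$. No step is truly hard; the only mild obstacle is the bookkeeping around equivalence classes of a.s.\ equal $X$-valued random variables, and the implicit separability needed to ensure measurability of the pointwise limit, neither of which causes real trouble in the applications.
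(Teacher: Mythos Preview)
Your argument is the standard Riesz--Fischer adaptation and is correct; the metric axioms, the fast-subsequence extraction, the a.s.\ Cauchy conclusion via monotone convergence, and the two Fatou steps are exactly what is needed. One small remark: the measurability of the pointwise limit $\mu$ does not in fact require separability of $X$---for any closed $C\subset X$ one has
\[
\mu^{-1}(C)=\bigcap_{m\geq 1}\bigcup_{K\geq 1}\bigcap_{k\geq K}\{\omega: d(\mu_{n_k}(\omega),C)<1/m\},
\]
and each inner set is measurable because $x\mapsto d(x,C)$ is continuous---so your caveat there is harmless but unnecessary.

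As for comparison with the paper: there is nothing to compare. The paper does not prove this lemma; it is quoted verbatim from the working paper \cite{siska2020} of \v{S}i\v{s}ka and Szpruch (see also the acknowledgement section), and no argument is reproduced. Your write-up therefore supplies a proof where the paper gives none.
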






\section{Existence, Uniqueness, Moment Bound and Propagation of Chaos}
In this section, the existence and uniqueness of the solution of \eqref{eq:MVSDE} is proved under more relaxed assumptions than those existing in the literature, see for example \cite{carmona2018b}.
Indeed, this is the first result on existence and uniqueness of \eqref{eq:MVSDE} where the coefficients are allowed to grow super-linearly. As a by product when $\sigma^0=0$ (wihout common noise), our result is an extension of \cite{reis2019b} in the sense that we also allow super-linear diffusion coefficient for equation \eqref{McKeanLimit} by assuming slightly more regular initial value, i.e., $X_0\in L^{2+\epsilon}$ for any $\epsilon>0$.
Further, moment boundedness of the solution is established. 
 
 The following assumptions are made in this section. 
\begin{assumption} \label{as:x0}
  $E|X_0|^{p_0}< \infty $ for a fixed constant $p_0>2$. 
 \end{assumption}
  \begin{assumption} \label{as:coercivity}
  There exists a constant $L>0$ such that 
  \begin{align*}
  2 x b_t(x,\mu)  & +  (p_0-1) \big|\sigma_t(x,\mu)\big|^2 + (p_0-1)  \big|\sigma_t^0(x,\mu)\big|^2 \leq L\big\{\big(1+|x|\big)^2 +  \mathcal{W}_2^{2}\big(\mu,\delta_0\big) \big\},
  \end{align*}
   for all $t\in[0,T]$, $x \in\mathbb{R}^d$ and $\mu\in\mathcal{P}_2(\mathbb{R}^d)$. 
   \end{assumption}
 \begin{assumption} \label{as:monotonicity}
 There exists a constant $L>0$ such that 
\begin{align*}
2 \big( x-\bar{x}\big) \big(b_t(x,\mu)- b_t(\bar{x},\bar{\mu})\big) & +  \big|\sigma_t(x,\mu)- \sigma_t(\bar{x},\bar{\mu})\big|^2 +  \big|\sigma_t^0(x,\mu)- \sigma_t^0(\bar{x},\bar{\mu})\big|^2  
\\
& \leq L\big\{|x-\bar{x}|^2 +\mathcal{W}_2^2(\mu,\bar{\mu})\big\},
 \end{align*} 
 for all $t\in[0,T]$, $x,\bar{x} \in\mathbb{R}^d$ and $\mu, \bar{\mu}\in\mathcal{P}_2(\mathbb{R}^d)$. 
 \end{assumption}
  \begin{assumption} \label{as:continuity}
  For every $t\in[0,T]$ and $\mu\in\mathcal{P}_2\big(\mathbb{R}^d\big)$, $b_t(x, \mu\big)$ is a continuous function of $x\in\mathbb{R}^d$. 
    \end{assumption}
The main result of this section is given in the following theorem.
\begin{thm}[\textbf{Existence, Uniqueness and Moment Bound}] \label{thm:eu}
Let Assumptions \ref{as:x0}, \ref{as:coercivity}, \ref{as:monotonicity} and \ref{as:continuity} be satisfied. Then, there exists a unique strong solution of \eqref{eq:MVSDE} and the following holds, 
\begin{align*} 
\sup_{0\leq t\leq T} E|X_t|^{p_0} \leq K,  
\end{align*}
where $K:=K(L, E|X_0|^{p_0}, d, m, m_0)>0$ is a constant. Moreover, 
\begin{align*} 
 E\sup_{0\leq t\leq T}|X_t|^{q} \leq K,  
\end{align*}
for any $q<p_0$. 
\end{thm}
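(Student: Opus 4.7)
\
The strategy is a Banach fixed-point argument on a complete metric space of $\mathbb{F}^0$-adapted random flows of measures, supplied by Lemma~\ref{lem:DL}, followed by a moment-amplification argument through Lemma~\ref{lem:gk}. I would first fix a small $T_0 \in (0, T]$, take $X := C([0, T_0]; \mathcal{P}_2(\mathbb{R}^d))$ endowed with the sup $L^2$-Wasserstein metric (a Polish space), and apply Lemma~\ref{lem:DL} with $p = 2$ and reference random variable $\mu^{\mathrm{ref}} \equiv \delta_0$ to obtain a complete metric space $(S, \rho)$ of $\mathbb{F}^0$-adapted continuous random measure flows $\mu : \Omega^0 \to X$ with $\rho(\mu, \nu)^2 := E^0 \sup_{t \le T_0} \mathcal{W}_2^2(\mu_t, \nu_t) < \infty$. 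For $\mu \in S$, I consider the frozen SDE
\[
X_t^\mu = X_0 + \int_0^t b_s(X_s^\mu, \mu_s)\,ds + \int_0^t \sigma_s(X_s^\mu, \mu_s)\,dW_s + \int_0^t \sigma_s^0(X_s^\mu, \mu_s)\,dW_s^0,
\]
whose coefficients are monotone in $x$ (Assumption~\ref{as:monotonicity} with $\mu = \bar\mu$) and continuous in $x$ (Assumption~\ref{as:continuity}), so classical monotone-SDE theory yields a unique $\mathbb{F}$-adapted strong solution. It\^o's formula applied to $|X_{\cdot \wedge \tau_N}^\mu|^{p_0}$ with $\tau_N := \inf\{t : |X_t^\mu| \ge N\}$, combined with Assumption~\ref{as:coercivity} (whose $(p_0 - 1)$ factor is exactly calibrated to absorb the It\^o quadratic-variation correction for $|x|^{p_0}$), Gronwall, and Fatou yield $\sup_{t \le T_0} E|X_t^\mu|^{p_0} \le K$ uniformly in $\mu \in S$, using Assumption~\ref{as:x0}.

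Next, set $\Phi(\mu)_t := \mathcal{L}^1(X_t^\mu)$; that $\Phi(\mu) \in S$ follows from the version argument of Lemma~2.5 in \cite{carmona2018b} combined with the uniform second-moment bound just obtained. For $\mu, \nu \in S$, It\^o on $|X_t^\mu - X_t^\nu|^2$ plus Assumption~\ref{as:monotonicity}, after a standard localisation to make the stochastic integrals true martingales, leads to
\[
E|X_t^\mu - X_t^\nu|^2 \le L \int_0^t \big\{ E|X_s^\mu - X_s^\nu|^2 + E^0 \mathcal{W}_2^2(\mu_s, \nu_s) \big\}\,ds,
\]
and Gronwall yields $\sup_{t \le T_0} E|X_t^\mu - X_t^\nu|^2 \le K T_0 \rho(\mu, \nu)^2$. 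A further BDG estimate on the martingale part of $|X^\mu - X^\nu|^2$, localised to deal with the super-linear diffusion and using the monotonicity bound to control the quadratic variation, pulls the supremum inside the expectation and gives $E \sup_{t \le T_0} |X_t^\mu - X_t^\nu|^2 \le C(T_0)\, \rho(\mu, \nu)^2$ with $C(T_0) \to 0$ as $T_0 \to 0$. The pointwise coupling bound $\mathcal{W}_2^2(\Phi(\mu)_t, \Phi(\nu)_t) \le E^1 |X_t^\mu - X_t^\nu|^2$ and taking $E^0 \sup_t$ deliver $\rho(\Phi(\mu), \Phi(\nu))^2 \le C(T_0) \rho(\mu, \nu)^2$, so $\Phi$ is a strict contraction for $T_0$ small. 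Banach's theorem then produces the unique fixed point on $[0, T_0]$, which yields the unique strong solution of \eqref{eq:MVSDE} on that interval; iterating over $[k T_0, (k+1) T_0]$ (legitimate because $X_{kT_0}$ inherits the $L^{p_0}$ moment) extends the solution to $[0, T]$.

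For the supremum moment bound, I would invoke Lemma~\ref{lem:gk}. Setting $f_t := |X_t|^{p_0}$, It\^o's formula on $|X_t|^{p_0}$ together with Assumption~\ref{as:coercivity} and the pointwise bound $\sup_t E|X_t|^{p_0} \le K$ already established yield $E|X_\tau|^{p_0} \le K_T$ for every bounded stopping time $\tau \le T$; the hypothesis of Lemma~\ref{lem:gk} therefore holds with the constant dominating process $g \equiv K_T$. The conclusion is $E \sup_{t \le T} |X_t|^{p_0 \gamma} \le \tfrac{2-\gamma}{1-\gamma} K_T^\gamma$ for every $\gamma \in (0, 1)$; setting $\gamma = q/p_0$ delivers the claim for all $q < p_0$.

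The main obstacle is the contraction step under super-linear diffusion: Assumption~\ref{as:monotonicity} controls $2(x - \bar x)(b - \bar b) + |\sigma - \sigma|^2 + |\sigma^0 - \sigma^0|^2$ only as a single combination, not term by term, so the BDG argument required to bring the supremum inside the expectation has to be paired with careful localisation. The same super-linearity is what forces the use of Lemma~\ref{lem:gk} in the final step, since a direct BDG-based estimate on $E \sup_t |X_t|^{p_0}$ is unavailable; the Gy\"ongy--Krylov argument is precisely tailored to trade a bit of integrability exponent for the supremum in time.
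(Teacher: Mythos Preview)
Your architecture --- Banach fixed-point via Lemma~\ref{lem:DL}, frozen-SDE well-posedness from monotone theory, and Lemma~\ref{lem:gk} for the uniform moment --- is the paper's, but your metric differs in a way that matters. You apply Lemma~\ref{lem:DL} with $X=C([0,T_0];\mathcal{P}_2(\mathbb{R}^d))$, producing $\rho(\mu,\nu)^2=E^0\sup_{t}\mathcal{W}_2^2(\mu_t,\nu_t)$; the paper instead applies Lemma~\ref{lem:DL} with $X=\mathcal{P}_2(\mathbb{R}^d)$, obtains the space $\chi$ with metric $(E^0\mathcal{W}_2^2)^{1/2}$, and then works in $C([0,T];\chi)$, whose metric is $\sup_t(E^0\mathcal{W}_2^2(\mu_t,\nu_t))^{1/2}$ --- the order of $\sup_t$ and $E^0$ is reversed. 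With that order the contraction estimate needs only $\sup_t E|Y_t^\mu-Y_t^\nu|^2$, which follows from It\^o's formula, Assumption~\ref{as:monotonicity} and Gronwall without any BDG step; the paper then iterates to get $|\Phi^j(\mu)-\Phi^j(\nu)|^2\le (KT)^j/j!\,|\mu-\nu|^2$, so some $\Phi^j$ is a strict contraction on all of $[0,T]$ and no restart on subintervals is needed.

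Your metric, by contrast, forces a bound on $E\sup_t|Y_t^\mu-Y_t^\nu|^2$, and the BDG step you sketch does not close under Assumption~\ref{as:monotonicity} alone. Writing $Z=Y^\mu-Y^\nu$, the quadratic variation of the martingale part of $|Z_t|^2$ involves $\int_0^t|Z_s|^2\big(|\sigma_s(Y^\mu_s,\mu_s)-\sigma_s(Y^\nu_s,\nu_s)|^2+|\sigma^0_s(Y^\mu_s,\mu_s)-\sigma^0_s(Y^\nu_s,\nu_s)|^2\big)\,ds$, and Assumption~\ref{as:monotonicity} bounds only the \emph{combination} $2Z(b^\mu-b^\nu)+|\sigma^\mu-\sigma^\nu|^2+|\sigma^{0,\mu}-\sigma^{0,\nu}|^2$, not the diffusion differences separately. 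When the drift is super-linearly dissipative (e.g.\ $b(x)=-x^3$), the term $-2Z(b^\mu-b^\nu)$ is nonnegative and of order $|Z|^2(|Y^\mu|^2+|Y^\nu|^2)$, so $|\sigma^\mu-\sigma^\nu|^2$ may genuinely be of that same super-linear order; the quadratic variation is then not dominated by anything that feeds back into a Gronwall loop, and localisation yields only truncation-dependent constants. Stochastic-Gronwall lemmas of Gy\"ongy--Krylov or Scheutzow type give at best $E[\sup_t|Z_t|^{r}]$ for $r<2$, which does not match your $L^2$-based $\rho$. A secondary issue is the restart at $kT_0$: $X_{kT_0}$ depends on $W^0$ and is not $\mathcal{F}^1$-measurable, so the framework that places the initial datum on $\Omega^1$ has to be re-established on each subinterval. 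Both difficulties disappear once you swap the order of $\sup_t$ and $E^0$ in the metric and replace the small-time contraction by the $\Phi^j$-iteration, exactly as the paper does.
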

\begin{proof}
Consider the space 
$$
\chi:=\Big\{\nu:\Omega^0 \to P_2(\mathbb{R}^d) \,\,\,r.v. : E^0\int_{\mathbb{R}^d} |x|^2 \nu(\mathrm{d}x)<\infty \Big\}.
$$
It follows from Lemma~\ref{lem:DL} that $(\chi,d)$ is a complete metric space, where the metric $d$ is given by 
\[
d(\nu_1,\nu_2):=\big(E^0 \mathcal{W}_2^2(\nu_1,\nu_2)\big)^{1/2},
\] 
and hence the space $C\big([0,T]; \chi\big)$ is also a complete metric space.
 Define an operator,  
 $$
 \Phi:C\big([0,T]; \chi\big) \to C\big([0,T]; \chi\big),
 $$ 
 by 
$$
 \Phi(\mu)=\big(\mathcal{L}^1(Y_t^\mu)\big)_{0\leq t\leq T},
$$
where $(Y_t^\mu)_{0\leq t\leq T}$ is the unique solution of the following stochastic differential equation with random coefficients, 
\begin{align*}
Y_t^\mu=X_0+\int_0^t b_s(Y_s^\mu, \mu_s) ds +\int_0^t \sigma_s(Y_s^\mu, \mu_s) dW_s+\int_0^t \sigma_s^0(Y_s^\mu, \mu_s) dW_s^0,
\end{align*}
almost surely for any $t\in[0,T]$. 
From \cite{gyongy1980}, the above SDE has a unique continuous solution in a strong sense. Further, it is known that under the assumptions of this article (see, e.g., \cite{kumar2017b}), 
\begin{align}
\sup_{t\in[0,T]} E |Y_t^\mu|^{p_0} \leq K \mbox{ and } E\sup_{t\in[0,T]} |Y_t^\mu|^{p} \leq K, \label{eq:mb:prop}
\end{align}
for any $p<p_0$ where $K:=K(L, E|X_0|^{p_0},  d, m, m_0)>0$. 

Notice that the map $\Phi$ is well-defined. Indeed,  for any $\mu\in C\big([0,T]; \chi\big)$, 
\begin{align*}
 |\Phi(\mu)|_{C([0,T]; \chi)} &  = \sup_{0\leq t \leq T}  E^0 \int_{\mathbb{R}^d} |x|^2 \mathcal{L}^1(Y^\mu_t)(dx)= \sup_{0\leq t\leq T} E^0  E^1\big(|Y_t^\mu|^2\big|\mathcal{F}^{W^0}\big)=\sup_{0\leq t\leq T}  E|Y_t^\mu|^2<\infty,
\end{align*}
which implies that $\Phi(\mu) \in C\big([0,T]; \chi \big)$. 
In view of Lemma 2.5 in \cite{carmona2018b} and equation \eqref{eq:mb:prop}, the flow $\big(\mathcal{L}^1(Y^\mu_t)\big)_{t\geq 0}$ has $P_0$-almost surely continuous paths in $\mathcal{P}_2\big(\mathbb{R}^d\big)$ and is $\mathbb{F}^0$-adapted.

By It\^o's formula, 
\begin{align*}
|Y^\mu_t & -Y^\nu_t|^{2} = 2 \int_0^t  \big( Y^\mu_s-Y^\nu_s\big)\big( b_s(Y_s^\mu, \mu_s)- b_s(Y_s^\nu, \nu_s) \big)  ds 
\\
&+2 \int_0^t  \big( Y^\mu_s-Y^\nu_s) \big(\sigma_s(Y_s^\mu, \mu_s)- \sigma_s(Y_s^\nu, \nu_s)\big)dW_s   
\\
&+2 \int_0^t  \big( Y^\mu_s-Y^\nu_s\big) \big(\sigma_s^0(Y_s^\mu, \mu_s)- \sigma_s^0(Y_s^\nu, \nu_s)\big) dW_s^0   
\\
&+\int_0^t \big| \sigma_s(Y_s^\mu, \mu_s)- \sigma_s(Y_s^\nu, \nu_s)\big|^2 ds  +\int_0^t  \big| \sigma_s^0(Y_s^\mu, \mu_s)- \sigma_s^0(Y_s^\nu, \nu_s)\big|^2 ds,  
\end{align*}
which on using Assumption \ref{as:monotonicity} gives, 
\begin{align*}
E\big|Y^\mu_t  -Y^\nu_t|^{2}=  & E\int_0^t \big\{2 \big( Y^\mu_s-Y^\nu_s\big)\big( b_s(Y_s^\mu, \mu_s)- b_s(Y_s^\nu, \nu_s) \big)  
\\
& +  \big| \sigma_s(Y_s^\mu, \mu_s)- \sigma_s(Y_s^\nu, \nu_s)\big|^2  +  \big| \sigma_s^0(Y_s^\mu, \mu_s)- \sigma_s^0(Y_s^\nu, \nu_s)\big|^2 \big\} ds
\\
 \leq & K \int_0^t  E|Y^\mu_s-Y^\nu_s|^{2} ds + K E\int_0^t  \mathcal{W}_2^{2}(\mu_s,\nu_s) ds
\end{align*}
for any $t\in[0,T]$. The application of Gronwall's inequality yields, 
\begin{align*}
\sup_{0\leq t \leq T}E|Y^\mu_t & -Y^\nu_t|^{2} \leq K \int_0^T \sup_{0\leq r \leq s} E^0\mathcal{W}_2^{2}(\mu_r,\nu_r) ds
\end{align*}
for any $t\in[0,T]$ which further implies, 
\begin{align*}
\big| \Phi (\mu) &-\Phi (\nu) \big|^2_{C\big([0,T]; \chi \big) }    \leq  \sup_{0\leq t \leq T} E |Y_t^\mu-Y_t^\nu|^2  
\\
& \leq  K  \int_0^T  \sup_{0\leq r \leq s} E^0 \mathcal{W}_2^{2}(\mu_r,\nu_r) ds= K  \int_0^T \big|\mu-\nu \big|_{C\big([0,s]; \chi \big)}^2 ds.  
\end{align*}
Using the above inequality, one obtains, 
\begin{align*}
\big| \Phi^2 (\mu) &-\Phi^2 (\nu) \big|^2_{C\big([0,T]; \chi\big) } \leq  K  \int_0^T \big|\Phi(\mu)-\Phi(\nu) \big|_{C\big([0,t_1]; \chi \big)}^2 dt_1
\\
& \leq  K^2  \int_0^T  \int_0^{t_1}  \big|\mu-\nu \big|_{C\big([0,t_2]; \chi \big)}^2 dt_2 dt_1
\end{align*}
and iterating further yields,
\begin{align*}
\big| \Phi^j (\mu) - & \Phi^j (\nu) \big|^2_{C\big([0,T]; \chi \big) }  \leq  K^j  \int_0^T  \int_0^{t_1} \cdots   \int_0^{t_{j-1}}  \big|\mu-\nu \big|_{C\big([0,t_j]; \chi \big)}^2 dt_j\cdots dt_1
\\
& \leq  K^j  \int_0^T  \frac{(T-t_j)^{j-1}}{(j-1)! } \big|\mu-\nu \big|_{C\big([0,t_j]; \chi \big)}^2 dt_j
\\
& \leq \frac{(KT)^j}{j!} \big|\mu-\nu \big|_{C\big([0,T]; \chi \big)}^2. 
\end{align*}
Since $\sum_{j=1}^\infty (KT)^j/j!=e^{KT}<\infty$, the mapping $\Phi$ has a unique fixed point which is the solution of \eqref{eq:MVSDE}.  
\end{proof}

Let us now introduce the \emph{interacting particle system} of \eqref{eq:MVSDE}, in order to study the \emph{propagation of chaos} property, see e.g., \cite{AS}. The state of the particle $i\in\{1,\ldots,N\}$ in the symmetric system of $N$ SDEs coupled in a mean-field sense is given by, 
\begin{align}
X_t^{i,N}=X_0^{i}+ \int_0^t b_s\big(X_s^{i,N}, \mu_s^{X,N}\big) ds + \int_0^t \sigma_s\big(X_s^{i,N}, \mu_s^{X,N}\big) dW_s^{i} + \int_0^t \sigma^0_s\big(X_s^{i,N}, \mu_s^{X,N}\big) dW_s^{0}, \label{eq:interacting}
\end{align}
almost surely, where 
\begin{align*}
\mu_t^{X,N}(\cdot):=\frac{1}{N} \sum_{i=1}^{N} \delta_{X_t^{i,N}} (\cdot)
\end{align*}
for any $t\in[0,T]$. Also, consider the following system of conditional non-interacting particles, 
\begin{align}
X_t^{i}=X_0^{i}+ \int_0^t b_s\big(X_s^{i}, \mathcal{L}^1(X^i_s)\big) ds +  \int_0^t \sigma_s\big(X_s^{i}, \mathcal{L}^1(X^i_s)\big) dW_s^{i} +  \int_0^t \sigma_s^0\big(X_s^{i}, \mathcal{L}^1(X^i_s)\big) dW_s^{0}, \label{eq:noninteracting} 
\end{align}
almost surely for any $t\in[0,T]$ and $i\in\{1,\ldots,N\}$. Moreover, by Proposition 2.11 in \cite{carmona2018b}, 
\begin{align*}
P^0\Big[\, \mathcal{L}^1(X_t^i)=\mathcal{L}^1(X_t^1) \mbox{ for all }  t\in [0,T] \, \Big]=1. 
\end{align*}
 The following proposition gives the propagation of chaos under the assumptions of this paper. 

\begin{prop}[\textbf{Propagation of Chaos}] \label{prop:chaos}
Let Assumptions  \ref{as:x0}, \ref{as:coercivity}, \ref{as:monotonicity} and \ref{as:continuity} be satisfied with $p_0>4$. Then, 
 \[
 \sup_{i \in \{1,\ldots,N\}} \sup_{t\in[0,T]} E \big|X_t^i-X_t^{i,N}\big|^2 \leq K
\begin{cases}
 N^{-1/2}, & \mbox{ if } d<4,
\\
  N^{-1/2} \ln(N), & \mbox{ if } d=4,
\\
  N^{-2/d},  & \mbox{ if } d>4,
\end{cases}
\]
where the constant $K>0$ does not depend on $N$. 
\end{prop}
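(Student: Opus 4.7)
The plan is to apply It\^o's formula to $|X_t^i - X_t^{i,N}|^2$, exploit the monotonicity Assumption \ref{as:monotonicity} to set up a Gronwall-type inequality, and then invoke a quantitative rate of convergence for the empirical measure of (conditionally) i.i.d.\ samples in $\mathcal{W}_2$. Concretely, set $e_t^i := X_t^i - X_t^{i,N}$. It\^o's formula, together with taking expectations (the moment bounds from Theorem \ref{thm:eu} ensure that the stochastic integrals are true martingales), gives
\begin{align*}
E|e_t^i|^2 = E\int_0^t \Bigl\{ & 2 (X_s^i - X_s^{i,N})\bigl( b_s(X_s^i, \mathcal{L}^1(X_s^i)) - b_s(X_s^{i,N}, \mu_s^{X,N}) \bigr) \\
& + \bigl| \sigma_s(X_s^i, \mathcal{L}^1(X_s^i)) - \sigma_s(X_s^{i,N}, \mu_s^{X,N}) \bigr|^2 \\
& + \bigl| \sigma_s^0(X_s^i, \mathcal{L}^1(X_s^i)) - \sigma_s^0(X_s^{i,N}, \mu_s^{X,N}) \bigr|^2 \Bigr\} ds,
\end{align*}
which by Assumption \ref{as:monotonicity} is bounded by $K\int_0^t \bigl( E|e_s^i|^2 + E\mathcal{W}_2^2(\mathcal{L}^1(X_s^i), \mu_s^{X,N}) \bigr) ds$.

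I would then introduce the auxiliary empirical measure $\mu_s^X := N^{-1}\sum_{j=1}^N \delta_{X_s^j}$ of the non-interacting system and split
\begin{align*}
\mathcal{W}_2^2(\mathcal{L}^1(X_s^i), \mu_s^{X,N}) \leq 2\, \mathcal{W}_2^2(\mu_s^X, \mu_s^{X,N}) + 2\, \mathcal{W}_2^2(\mathcal{L}^1(X_s^i), \mu_s^X).
\end{align*}
The first (propagation) term is handled via the diagonal coupling $\mathcal{W}_2^2(\mu_s^X, \mu_s^{X,N}) \leq N^{-1}\sum_{j=1}^N |e_s^j|^2$, which, by exchangeability of the system, satisfies $E\mathcal{W}_2^2(\mu_s^X, \mu_s^{X,N}) \leq E|e_s^1|^2$ and therefore folds cleanly into the Gronwall term. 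For the second (statistical) term, conditional on $\mathcal{F}_T^0$ the random variables $X_s^1, \dots, X_s^N$ are i.i.d.\ with common law $\mathcal{L}^1(X_s^1)$, so I would apply the classical Fournier--Guillin estimate for the $\mathcal{W}_2$-convergence of an empirical measure conditionally on $\mathcal{F}_T^0$ and then take $P^0$-expectation, producing exactly the rate stated in the proposition. The hypothesis $p_0 > 4$ together with the uniform moment bound $\sup_{t \in [0,T]} E|X_t^1|^{p_0} \leq K$ from Theorem \ref{thm:eu} supplies the moment input that Fournier--Guillin requires. Gronwall's inequality applied to $\phi(t) := \sup_i E|e_t^i|^2$ (finite and, by exchangeability, independent of $i$) then closes the argument.

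The main obstacle I anticipate is the conditional Fournier--Guillin step: one must verify that the $p_0$-moments of the random measure $\mathcal{L}^1(X_s^1)$ are $P^0$-integrable, so that the $P^0$-expectation of the conditional Fournier--Guillin bound remains finite and of the advertised order. This is in fact automatic from the Fubini identity $E \int_{\mathbb{R}^d} |x|^{p_0}\, \mathcal{L}^1(X_s^1)(dx) = E|X_s^1|^{p_0} \leq K$, but the step is delicate because the Fournier--Guillin constant depends non-linearly on the target moment, so some care is needed when passing the conditional bound under the outer expectation and when absorbing the $\sup_{i}$ using exchangeability.
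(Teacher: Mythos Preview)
Your proposal is correct and follows essentially the same route as the paper: It\^o's formula plus Assumption~\ref{as:monotonicity} to get a Gronwall inequality, the same triangle-inequality splitting of $\mathcal{W}_2^2(\mathcal{L}^1(X_s^i),\mu_s^{X,N})$ via the intermediate empirical measure $\mu_s^X=\frac{1}{N}\sum_j\delta_{X_s^j}$, and the conditional Fournier--Guillin rate (cited in the paper as Theorem~5.8 and Remark~5.9 of \cite{carmona2018a}) followed by $P^0$-expectation. The ``obstacle'' you flag is exactly the step the paper handles by noting that the conditional bound carries the factor $\bigl[E^1|X_t^1|^{p_0}\bigr]^{2/p_0}$, whose $P^0$-expectation is controlled by Jensen and the uniform moment bound from Theorem~\ref{thm:eu}.
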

\begin{proof}
From equations \eqref{eq:interacting}, \eqref{eq:noninteracting} and It\^o's formula, 
\begin{align*}
\big|X_t^i-X_t^{i,N}\big|^2=  &2\int_0^t \big(X_s^i-X_s^{i,N}\big)\big(b_s\big(X^i_s, \mathcal{L}^1({X}^i_s)\big)-b_s\big(X_s^{i,N}, \mu_s^{X,N}\big)  \big) ds
\\
&+2 \int_0^t \big(X^i_s-X_s^{i,N}\big)\big(\sigma_s\big(X^i_s, \mathcal{L}^1(X^i_s)\big)-\sigma_s\big(X_s^{i,N}, \mu_s^{X,N}\big)  \big) dW_s
\\
&+ 2 \int_0^t \big(X^i_s-X_s^{i,N}\big)\big(\sigma_s^0\big(X^i_s, \mathcal{L}^1({X}^i_s)\big)-\sigma_s^0\big(X_s^{i,N}, \mu_s^{X,N}\big)  \big) dW_s^0
\\
&+ \int_0^t \big|\sigma_s\big(X^i_s, \mathcal{L}^1(X^i_s)\big)-\sigma_s\big(X_s^{i,N}, \mu_s^{X,N}\big)\big|^2   ds
\\
&+ \int_0^t \big|\sigma_s^0\big(X^i_s, \mathcal{L}^1(X^i_s)\big)-\sigma_s^0\big(X_s^{i,N}, \mu_s^{X,N}\big)\big|^2   ds
\end{align*}
which due to Assumption \ref{as:monotonicity} yields, 
\begin{align}
E\big|X_t^i  -X_t^{i,N}\big|^2= &E\int_0^t \Big\{2\big(X^i_s-X_s^{i,N}\big)\big(b_s\big({X}^i_s, \mathcal{L}^1({X}^i_s)\big)-b_s\big(X_s^{i,N}, \mu_s^{X,N}\big) \notag 
\\
&+ \big|\sigma_s\big(X^i_s, \mathcal{L}^1(X^i_s)\big)-\sigma_s\big(X_s^{i,N}, \mu_s^{X,N}\big)\big|^2  +  \big|\sigma_s^0\big(X^i_s, \mathcal{L}^1(X^i_s)\big)-\sigma_s^0\big(X_s^{i,N}, \mu_s^{X,N}\big)\big|^2   \Big\} ds \notag
\\
\leq &  K E\int_0^t  \mathcal{W}_2^2\big(\mathcal{L}^1(X^i_s), \mu_s^{X,N}\big) ds + K \int_0^t  E\big|X^i_s-X_s^{i,N}\big|^2ds  \label{eq:prop:gron}
\end{align}
for any $t\in[0,T]$. Notice that
\begin{align*}
\mathcal{W}_2^2 \big(\mathcal{L}^1(X^i_s), \mu_s^{X,N} \big) \leq \frac{2}{N} \sum_{i=1}^N \big| X^i_s-X_s^{i,N} \big|^2 + 2 \mathcal{W}_2^2 \Big(\frac{1}{N} \sum_{i=1}^N \delta_{X^i_s}, \mathcal{L}^1\big( X_s^1\big) \Big),
\end{align*}
and then applying Theorem 5.8 and Remark 5.9 of \cite{carmona2018a}, one obtains, 
\begin{align} \label{eq:rate:w2}
E^1 \Big[ \mathcal{W}_2^2 \Big(\frac{1}{N} \sum_{i=1}^N \delta_{X^i_s}, \mathcal{L}^1\big( X_s^1\big) \Big) \Big]\leq K \big[E^1|X_t^1|^{p_0}\big]^{2/{p_0}}
\begin{cases}
 N^{-1/2}, & \mbox{ if } d <4, 
\\
 N^{-1/2} \ln (N), &   \mbox{ if } d =4, 
\\
 N^{-2/d}, &  \mbox{ if } d >4,  
\end{cases}
\end{align}
$P_0$--almost surely, where  $K:=K(d,p_0)>0$.  Taking expectation with respect to $P_0$ in equation \eqref{eq:rate:w2}, substituting this bound in equation \eqref{eq:prop:gron} and applying Gronwall's lemma completes the proof. 
\end{proof}
\section{Tamed Euler Scheme} 
In this section, we construct a tamed Euler scheme for the interacting particle system \eqref{eq:interacting} associated with \eqref{eq:MVSDE} when coefficients $b$, $\sigma$ and $\sigma^0$ are allowed to grow super-linearly. 
This is the first result on numerical approximation for interacting particles associated with equation \eqref{eq:MVSDE} with linearly and/or super-linearly growing coefficients. If $\sigma^0=0$, i.e., in the absence of common noise, a tamed Euler scheme has been studied in \cite{reis2019a}, for equations where only the drift coefficient is allowed to grow super-linearly. However, our results allow even the diffusion coefficient to grow super-linearly.
The proposed tamed Euler scheme is given below in equation \eqref{eq:euler}. We investigate the moment bounds and the rate of convergence of our tamed Euler scheme in Lemma \ref{lem:mb:euler} and Theorem \ref{thm:rate:euler}, respectively. 
Indeed, the rate of convergence of the scheme is shown to be equal to $1/2$, which is consistent with the classical Euler scheme for SDEs. 
For this purpose, we replace Assumption \ref{as:monotonicity} by a slightly stronger Assumption \ref{as:monotonicity:rate} and add more assumptions on the regularity of the coefficients as stated below. 
\begin{assumption} \label{as:monotonicity:rate}
 For some $p_1>2$, there exists a constant $L>0$ such that 
\begin{align*}
2 \big( x-\bar{x}\big) \big(b_t(x,\mu)- b_t(\bar{x},\bar{\mu})\big) & + (p_1-1) \big|\sigma_t(x,\mu)- \sigma_t(\bar{x},\bar{\mu})\big|^2 + (p_1-1)  \big|\sigma_t^0(x,\mu)- \sigma_t^0(\bar{x},\bar{\mu})\big|^2  
\\
& \leq L\big\{|x-\bar{x}|^2 +\mathcal{W}_2^2(\mu,\bar{\mu})\big\},
 \end{align*} 
 for all $t\in[0,T]$, $x,\bar{x} \in\mathbb{R}^d$ and $\mu, \bar{\mu}\in\mathcal{P}_2(\mathbb{R}^d)$. 
 \end{assumption}
 \begin{assumption} \label{as:polynomial:Lipschitz}
 There exist   constants $L>0$  and $\rho>0$ such that 
\begin{align*}
 |b_t(x,\mu)- b_t(\bar{x},\bar{\mu})| & \leq L \big\{ (1+|x|+|\bar{x}|)^{\rho/2}|x-\bar{x}|+  \mathcal{W}_2(\mu,\bar{\mu}) \big\},
\end{align*}
for all $t\in[0,T]$, $x,\bar{x} \in\mathbb{R}^d$ and $\mu, \bar{\mu}\in\mathcal{P}_2(\mathbb{R}^d)$.  
\end{assumption}
 \begin{assumption} \label{as:holder}
 There exists a constant $L>0$ such that
 \begin{align*}
 \big|b_t(x,\mu)-b_s(x,\mu)\big| + \big|\sigma_t(x,\mu)-\sigma_s(x,\mu)\big| + \big|\sigma_t^0(x,\mu)-\sigma_s^0(x,\mu)\big| \leq L |t-s|^{1/2},
 \end{align*}
 for all $t, s\in[0,T]$, $x\in\mathbb{R}^d$ and $\mu\in\mathcal{P}_2(\mathbb{R}^d)$. 
 \end{assumption}
 \begin{rem} \label{rem:poly:lipschitz}
 Due to Assumption \ref{as:monotonicity:rate} and \ref{as:polynomial:Lipschitz}, there exists a constant $K:=K(L)>0$ such that  
 \begin{align*}
  |\sigma_t(x,\mu)- \sigma_t(\bar{x},\bar{\mu})| +  |\sigma_t^0(x,\mu)- \sigma_t^0(\bar{x},\bar{\mu})| & \leq K \big\{ (1+|x|+|\bar{x}|)^{\rho/4}|x-\bar{x}|+  \mathcal{W}_2(\mu,\bar{\mu}) \big\},
 \end{align*}
 for all $t\in[0,T]$, $x,\bar{x} \in\mathbb{R}^d$ and $\mu, \bar{\mu}\in\mathcal{P}_2(\mathbb{R}^d)$.  
 \end{rem}
 \begin{rem} \label{rem:poly:growth}
 Due to Assumptions \ref{as:coercivity} and \ref{as:polynomial:Lipschitz},  there exists a constant $K:=K(L, T)>0$ such that  
 \begin{align*}
  |b_t(x,\mu)| &\leq K\big\{(1+|x|)^{\rho/2+1}+ \mathcal{W}_2(\mu,\delta_0) \big\},
  \\
    |\sigma_t(x,\mu)| + |\sigma_t^0(x,\mu)| &\leq K \big\{(1+|x|)^{\rho/4+1} + \mathcal{W}_2(\mu,\delta_0) \big\},  
 \end{align*}
 for all $t\in[0,T]$, $x \in\mathbb{R}^d$ and $\mu \in\mathcal{P}_2(\mathbb{R}^d)$.  
 \end{rem}
\begin{rem}
It is easy to see that Assumption \ref{as:continuity} follows from Assumption \ref{as:polynomial:Lipschitz}. 
\end{rem}
For introducing the tamed Euler scheme for the interacting particle system \eqref{eq:interacting} associated with \eqref{eq:MVSDE}, we partition $[0, T]$  into $n$ sub-intervals of size $h:=T/n$ and define $~\kappa_n(t):=\lfloor nt\rfloor/n$ for any $t\in[0,T]$ and $n \in\mathbb{N}$. Further, for every $t \in [0,T]$, $x \in \mathbb{R}^d$ and $\mu \in \mathcal{P}_2( \mathbb{R}^d)$, we define,
\begin{align}\label{eq:Taming}
b_{t}^n \big(x, \mu):= \frac{b_{t} \big(x, \mu)}{1+n^{-1/2}|x|^{\rho/2}}, \quad \sigma_{t}^n \big(x, \mu):= \frac{\sigma_{t}\big(x, \mu)}{1+n^{-1/2}|x|^{\rho/2}}, \quad \sigma_{t}^{0, n} \big(x, \mu):= \frac{\sigma_{t}^{0} \big(x, \mu)}{1+n^{-1/2}|x|^{\rho/2}},
\end{align}
and propose the tamed Euler scheme given by
\begin{align}
X_t^{i,N, n}=X_0^{i}+ \int_0^t b_{\kappa_n(s)}^n \big(X_{\kappa_n(s)}^{i,N, n}, & \mu_{\kappa_n(s)}^{X,N,n}\big) ds  + \int_0^t \sigma_{\kappa_n(s)}^n\big(X_{\kappa_n(s)}^{i,N,n}, \mu_{\kappa_n(s)}^{X,N,n}\big) dW_s^{i}  \notag
\\
& + \int_0^t \sigma^{0,n}_{\kappa_n(s)}\big(X_{\kappa_n(s)}^{i,N,n}, \mu_{\kappa_n(s)}^{X,N,n}\big) dW_s^{0}, \label{eq:euler}
\end{align}
for each $i\in\{1,\ldots,N\}$, where 
\begin{align*}
\mu_t^{X,N, n}(\cdot):=\frac{1}{N} \sum_{i=1}^{N} \delta_{X_t^{i,N,n}} (\cdot),
\end{align*}
almost surely for any $t\in[0,T]$ and $n\in\mathbb{N}$. 
\begin{rem} \label{rem:growth:taming}
Using equation \eqref{eq:Taming} and Remark  \ref{rem:poly:growth}, one obtains,
\begin{align*}
& |b_{t}^n \big(x, \mu)| \leq  K \min \Big\{n^{1/2} \left(1+|x| + \mathcal{W}_2(\mu,\delta_0) \right), |b_{t}\big(x, \mu)| \Big\}, \nonumber \\
& |\sigma_{t}^n \big(x, \mu)| \leq  K \min \Big\{ n^{1/4} \left(1+|x| + \mathcal{W}_2(\mu,\delta_0) \right), |\sigma_{t} \big(x, \mu)| \Big\}, \nonumber \\
& |\sigma_{t}^{0,n} \big(x, \mu)| \leq  K \min \Big\{ n^{1/4} \big(1+|x| + \mathcal{W}_2(\mu,\delta_0) \big), |\sigma_{t}^{0} \big(x, \mu)|  \Big\},
\end{align*}
for all $t \in [0,T]$, $x \in \mathbb{R}^d$ and $\mu \in \mathcal{P}_2( \mathbb{R}^d)$ and for some constant $K>0$ independent of $n$.
\end{rem}
\begin{rem} \label{rem:no:taming}
When $\rho=0$, then Assumption \ref{as:polynomial:Lipschitz} and Remark \ref{rem:poly:lipschitz} leads to Lipschitz continuity of $b$ and $\sigma$ in the state variable in which case no taming is needed in equation \eqref{eq:Taming}.   
\end{rem} 

Before proving the moment bound of the tamed Euler scheme \eqref{eq:euler}, we require the following lemmas. 
\begin{lem}\label{Lemma:onestep}
Let Assumptions \ref{as:x0}, \ref{as:coercivity}, \ref{as:monotonicity:rate} and \ref{as:polynomial:Lipschitz} hold. Then, 
\begin{align*}
E \big|X_{s}^{i,N, n} - X_{\kappa_n(s)}^{i,N, n}\big|^{p_0} &\leq K n^{-p_0/4} E \Big(1 + \big|X_{\kappa_n(s)}^{i,N, n}\big| + \mathcal{W}_2\big(\mu_{\kappa_n(s)}^{X,N,n}, \delta_0\big) \Big)^{p_0},
\end{align*}
for any $i\in\{1,\ldots,N\}$, $s\in[0,T]$ and $n, N\in\mathbb{N}$. 
\end{lem}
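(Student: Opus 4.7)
The plan is to exploit the piecewise-constant structure of the integrands in the tamed scheme and then apply the $L^{p_0}$ bounds on Brownian increments together with the taming bounds from Remark~\ref{rem:growth:taming}.

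First I would observe that for any $r\in[\kappa_n(s),s]$ we have $\kappa_n(r)=\kappa_n(s)$, so equation~\eqref{eq:euler} collapses to
\[
X_s^{i,N,n}-X_{\kappa_n(s)}^{i,N,n}
= b_{\kappa_n(s)}^n\bigl(\cdot\bigr)(s-\kappa_n(s))
+ \sigma_{\kappa_n(s)}^n\bigl(\cdot\bigr)\bigl(W_s^i-W_{\kappa_n(s)}^i\bigr)
+ \sigma_{\kappa_n(s)}^{0,n}\bigl(\cdot\bigr)\bigl(W_s^0-W_{\kappa_n(s)}^0\bigr),
\]
with the arguments of each coefficient being $(X_{\kappa_n(s)}^{i,N,n},\mu_{\kappa_n(s)}^{X,N,n})$. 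The elementary inequality $|a+b+c|^{p_0}\le 3^{p_0-1}(|a|^{p_0}+|b|^{p_0}+|c|^{p_0})$ then reduces the problem to estimating the $L^{p_0}$-norm of each of the three pieces separately.

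For the drift term I would use the bound $|b_{\kappa_n(s)}^n(x,\mu)|\le K n^{1/2}(1+|x|+\mathcal{W}_2(\mu,\delta_0))$ from Remark~\ref{rem:growth:taming}, together with $s-\kappa_n(s)\le n^{-1}$, to obtain a contribution of order $n^{p_0/2}\cdot n^{-p_0}=n^{-p_0/2}$ times $E(1+|X_{\kappa_n(s)}^{i,N,n}|+\mathcal{W}_2(\mu_{\kappa_n(s)}^{X,N,n},\delta_0))^{p_0}$. For the two diffusion terms I would exploit the fact that $\sigma_{\kappa_n(s)}^n$ and $\sigma_{\kappa_n(s)}^{0,n}$ evaluated at the step point are $\mathcal{F}_{\kappa_n(s)}$-measurable, while the Brownian increments $W_s^i-W_{\kappa_n(s)}^i$ and $W_s^0-W_{\kappa_n(s)}^0$ are independent of $\mathcal{F}_{\kappa_n(s)}$. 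Conditioning on $\mathcal{F}_{\kappa_n(s)}$ and using the Gaussian moment estimate $E|W_s-W_{\kappa_n(s)}|^{p_0}\le K(s-\kappa_n(s))^{p_0/2}\le K n^{-p_0/2}$, together with the Remark~\ref{rem:growth:taming} bound $|\sigma_{\kappa_n(s)}^n(x,\mu)|+|\sigma_{\kappa_n(s)}^{0,n}(x,\mu)|\le K n^{1/4}(1+|x|+\mathcal{W}_2(\mu,\delta_0))$, yields a contribution of order $n^{p_0/4}\cdot n^{-p_0/2}=n^{-p_0/4}$ times the same moment factor.

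Combining the three pieces and noting that $n^{-p_0/2}\le n^{-p_0/4}$ for $n\ge 1$, I recover the claimed $n^{-p_0/4}$ rate with the stated right-hand side. There is really no serious obstacle here: the only point that needs some care is keeping the diffusion and drift arguments $\mathcal{F}_{\kappa_n(s)}$-measurable so that the independence argument against the Brownian increments is clean, and recognising that the taming rates $n^{1/2}$ for $b^n$ versus $n^{1/4}$ for $\sigma^n,\sigma^{0,n}$ are precisely calibrated so that both contributions collapse to a common $n^{-p_0/4}$ scaling. The condition $p_0>2$ enters only via the use of $E|W_s-W_{\kappa_n(s)}|^{p_0}\le K(s-\kappa_n(s))^{p_0/2}$, which holds for any $p_0\ge 1$ anyway, so the bound in fact holds for every exponent $\ge 1$.
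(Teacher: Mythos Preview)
Your proof is correct and follows essentially the same route as the paper: split into drift plus two diffusion terms, apply the taming bounds of Remark~\ref{rem:growth:taming}, and collect the powers of $n$. The only minor technical difference is that you exploit the constancy of the integrands on $[\kappa_n(s),s]$ and use independence of the Brownian increments from $\mathcal{F}_{\kappa_n(s)}$ together with Gaussian moment bounds, whereas the paper phrases the same estimate via H\"older's inequality and the Burkholder--Davis--Gundy inequality; both lead to identical bounds here.
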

\begin{proof}
Due to equation (\ref{eq:euler}), 
\begin{align*}
\big|X_{s}^{i,N, n} - X_{\kappa_n(s)}^{i,N, n}\big|^{p_0} &\leq  K \Big| \int_{\kappa_n(s)}^s b_{\kappa_n(u)}^n \big(X_{\kappa_n(u)}^{i,N, n},  \mu_{\kappa_n(u)}^{X,N,n}\big) du \Big|^{p_0}  +  K \Big| \int_{\kappa_n(s)}^s \sigma_{\kappa_n(u)}^n\big(X_{\kappa_n(u)}^{i,N,n}, \mu_{\kappa_n(u)}^{X,N,n}\big) dW_u^{i} \Big|^{p_0} \nonumber
\\
& \qquad +  K \Big| \int_{\kappa_n(s)}^s \sigma^{0,n}_{\kappa_n(u)}\big(X_{\kappa_n(u)}^{i,N,n}, \mu_{\kappa_n(u)}^{X,N,n}\big) dW_u^{0} \Big|^{p_0}
\end{align*}
which on applying H\"{o}lder's inequality and Burkholder--Davis--Gundy inequality yields, 
\begin{align*}
E \big|X_{s}^{i,N, n} - X_{\kappa_n(s)}^{i,N, n}\big|^{p_0}   \leq  & K n^{-p_0+1} E \int_{\kappa_n(s)}^s \big|b_{\kappa_n(u)}^n \big(X_{\kappa_n(u)}^{i,N, n},  \mu_{\kappa_n(u)}^{X,N,n}\big)\big|^{p_0} du 
 \\
&+ K n^{-p_0/2+1} E  \int_{\kappa_n(s)}^s \big| \sigma_{\kappa_n(u)}^{n}\big(X_{\kappa_n(u)}^{i,N,n}, \mu_{\kappa_n(u)}^{X,N,n}\big) \big|^{p_0}  du 
\\
 & + K n^{-p_0/2+1}  E  \int_{\kappa_n(s)}^s \big| \sigma_{\kappa_n(u)}^{0,n} \big(X_{\kappa_n(u)}^{i,N,n}, \mu_{\kappa_n(u)}^{X,N,n}\big) \big|^{p_0} du
\end{align*}
and then the result follows by using Remark \ref{rem:growth:taming}. 
\end{proof}
In the following Lemma, we prove the moment boundedness of the tamed Euler scheme \eqref{eq:euler}. 
\begin{lem}[\textbf{Moment Bound}] \label{lem:mb:euler}
Let Assumptions \ref{as:x0}, \ref{as:coercivity}, \ref{as:monotonicity:rate} and \ref{as:polynomial:Lipschitz} hold. Then, 
\begin{equation*}
\sup_{i \in \lbrace 1, \ldots, N \rbrace } \sup_{t \in [0,T]} E \left(1 + \big|X_t^{i,N, n}\big|^2 \right)^{p_0/2} \leq K, 
\end{equation*}
where $K>0$ does not depend on $n, N\in\mathbb{N}$. Moreover, 
\begin{equation*}
\sup_{i \in \lbrace 1, \ldots, N \rbrace } E  \sup_{t \in [0,T]} \left(1 + \big|X_t^{i,N, n}\big|^2 \right)^{q/2} \leq K, 
\end{equation*}
for any $q<p_0$. 
\end{lem}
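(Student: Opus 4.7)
The plan is to apply Itô's formula to $V(x):=(1+|x|^2)^{p_0/2}$ at the Euler iterate $X_t^{i,N,n}$ and exploit the fact that the tamed coefficients inherit Assumption \ref{as:coercivity}. Writing $a:=1+n^{-1/2}|x|^{\rho/2}\ge 1$, one has
\begin{equation*}
2xb_t^n(x,\mu)+(p_0-1)|\sigma_t^n(x,\mu)|^2+(p_0-1)|\sigma_t^{0,n}(x,\mu)|^2=a^{-1}(2xb_t)+a^{-2}(p_0-1)(|\sigma_t|^2+|\sigma_t^{0}|^2).
\end{equation*}
Splitting into the cases $2xb_t\ge 0$ (use $a^{-1},a^{-2}\le 1$) and $2xb_t<0$ (use $a^{-2}\le a^{-1}$ and factor out $a^{-1}\le 1$), Assumption \ref{as:coercivity} transfers verbatim to $(b^n,\sigma^n,\sigma^{0,n})$ with the same constant $L$. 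Using $\nabla V(x)=p_0(1+|x|^2)^{p_0/2-1}x$ and the standard bound $x^{*}\sigma\sigma^{*}x\le(1+|x|^2)|\sigma|^2$ (available since $p_0>2$), Itô's formula then represents the drift of $V(X_t^{i,N,n})$ as
\begin{equation*}
\tfrac{p_0}{2}\bigl(1+|X_s^{i,N,n}|^2\bigr)^{p_0/2-1}\Bigl\{2X_s^{i,N,n}\,b_{\kappa_n(s)}^n\bigl(X_{\kappa_n(s)}^{i,N,n},\mu_{\kappa_n(s)}^{X,N,n}\bigr)+(p_0-1)\bigl(|\sigma_{\kappa_n(s)}^n|^2+|\sigma_{\kappa_n(s)}^{0,n}|^2\bigr)\Bigr\},
\end{equation*}
i.e., exactly the tamed coercivity bracket but with the outer weight and the first slot of the drift evaluated at the point $X_s^{i,N,n}$ rather than at the stale point $X_{\kappa_n(s)}^{i,N,n}$.

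The next step is to decompose this expression so that every factor is evaluated at $X_{\kappa_n(s)}^{i,N,n}$ (where tamed coercivity can be invoked) plus a remainder driven by $X_s^{i,N,n}-X_{\kappa_n(s)}^{i,N,n}$. The consistent piece, bounded directly by $\tfrac{Lp_0}{2}\bigl(1+|X_{\kappa_n(s)}^{i,N,n}|^2\bigr)^{p_0/2-1}\{(1+|X_{\kappa_n(s)}^{i,N,n}|)^2+\mathcal{W}_2^2(\mu_{\kappa_n(s)}^{X,N,n},\delta_0)\}$, simplifies via Young's inequality on the cross term to $KV\bigl(X_{\kappa_n(s)}^{i,N,n}\bigr)+K\mathcal{W}_2^{p_0}\bigl(\mu_{\kappa_n(s)}^{X,N,n},\delta_0\bigr)$. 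The remainder terms each pair a polynomial factor of $X$-degree at most $p_0-2$ (from Taylor-expanding $\nabla V$ and $(1+|\cdot|^2)^{p_0/2-1}$) with the increment $X_s^{i,N,n}-X_{\kappa_n(s)}^{i,N,n}$ and the tamed coefficients at the stale point. Combining Remark \ref{rem:growth:taming} (which supplies $|b^n|\le Kn^{1/2}(1+|X_{\kappa_n(s)}^{i,N,n}|+\mathcal{W}_2)$ and $|\sigma^n|^2,|\sigma^{0,n}|^2\le Kn^{1/2}(1+|X_{\kappa_n(s)}^{i,N,n}|+\mathcal{W}_2)^2$), Lemma \ref{Lemma:onestep} (which gives $\|X_s^{i,N,n}-X_{\kappa_n(s)}^{i,N,n}\|_{L^{p_0}}\le Kn^{-1/4}\|1+|X_{\kappa_n(s)}^{i,N,n}|+\mathcal{W}_2\|_{L^{p_0}}$), and Hölder's inequality with exponents chosen so that each $X$-factor lies in $L^{p_0}$, I expect every remainder term to be bounded uniformly in $n$ by $K\bigl(1+EV(X_{\kappa_n(s)}^{1,N,n})\bigr)+K\,E\mathcal{W}_2^{p_0}\bigl(\mu_{\kappa_n(s)}^{X,N,n},\delta_0\bigr)$.

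Taking expectation in Itô's formula (after standard localization by $\tau_R:=\inf\{t\ge 0:|X_t^{i,N,n}|\ge R\}$ followed by $R\uparrow\infty$), the stochastic integrals vanish. By exchangeability of the particle system and Jensen's inequality applied to the empirical mean, $E\mathcal{W}_2^{p_0}\bigl(\mu_r^{X,N,n},\delta_0\bigr)\le E|X_r^{1,N,n}|^{p_0}\le EV\bigl(X_r^{1,N,n}\bigr)$. Writing $u(t):=\sup_{r\le t}EV\bigl(X_r^{1,N,n}\bigr)$, I obtain $u(t)\le EV(X_0^1)+K\int_0^t u(s)\,ds$, and Gronwall's inequality yields the first claim. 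For the pathwise-supremum bound, I would apply Lemma \ref{lem:gk} with $f_t:=1+|X_t^{i,N,n}|^2$ and the non-negative dominator $g_t$ given by the sum of the integrated drift estimate established above and the absolute values of the two running stochastic integrals; the hypothesis $Ef_\tau\le Eg_\tau$ for bounded stopping times follows by repeating the argument above with $\tau$ in place of $t$, and Lemma \ref{lem:gk} applied with $\gamma=q/p_0\in(0,1)$ then delivers $E\sup_{t\le T}\bigl(1+|X_t^{i,N,n}|^2\bigr)^{q/2}\le K$.

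The principal obstacle lies in the remainder estimate: one must verify that the Hölder exponents, the $n^{1/2}$ taming penalty on $b^n,|\sigma^n|^2,|\sigma^{0,n}|^2$, the $n^{-1/4}$ gain from Lemma \ref{Lemma:onestep}, and the polynomial degree $p_0-2$ arising from the Taylor expansion of $\nabla V$ all combine so that only moments of order $p_0$ — exactly the order we are bounding — appear on the right-hand side. This delicate balance is precisely what the taming construction \eqref{eq:Taming} is engineered to permit; without the taming factor, the corresponding estimates would diverge as $n\to\infty$.
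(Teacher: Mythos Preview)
Your overall architecture---It\^o's formula on $(1+|x|^2)^{p_0/2}$, tamed coercivity, Gronwall, then Lemma~\ref{lem:gk}---matches the paper's, and your case analysis showing that $(b^n,\sigma^n,\sigma^{0,n})$ inherit Assumption~\ref{as:coercivity} with the same constant is correct (the paper uses this implicitly). The gap is in the remainder estimate: the combination you propose---Remark~\ref{rem:growth:taming}, Lemma~\ref{Lemma:onestep}, and H\"older---does \emph{not} produce a bound uniform in $n$.

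Take the dominant remainder coming from the drift slot,
\[
E\Big[(1+|X_s^{i,N,n}|^2)^{p_0/2-1}\,\big|X_s^{i,N,n}-X_{\kappa_n(s)}^{i,N,n}\big|\,\big|b^n_{\kappa_n(s)}\big|\Big].
\]
Any H\"older or Young splitting that keeps all factors within $L^{p_0}$ pairs the $n^{-1/4}$ gain from Lemma~\ref{Lemma:onestep} against the $n^{1/2}$ cost of $|b^n|$ from Remark~\ref{rem:growth:taming}, leaving at best a net $n^{1/4}$; replacing the tamed bound on $b^n$ by the polynomial growth $|b|\le K(1+|x|)^{\rho/2+1}$ instead forces moments of order $p_0+\rho/2$, which you do not have. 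The same imbalance afflicts the extra remainder you create by moving the outer weight past $|\sigma^n|^2$. The taming factor in \eqref{eq:Taming} is calibrated so that $n^{-1}|b^n|^2$ and $n^{-1/2}|\sigma^n|^2$ are uniformly controlled, not $n^{-1/4}|b^n|$; the ``delicate balance'' you anticipate therefore fails for your decomposition.

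The paper closes this in two ways you omit. First, it does \emph{not} move the outer weight: after bounding the bracket by tamed coercivity at $X_{\kappa_n(s)}^{i,N,n}$, Young's inequality absorbs the mixed product $(1+|X_s^{i,N,n}|^2)^{p_0/2-1}\{(1+|X_{\kappa_n(s)}^{i,N,n}|)^2+\mathcal{W}_2^2\}$ directly into $\sup_r E(1+|X_r^{i,N,n}|^2)^{p_0/2}$, so no diffusion remainder ever arises. Second, for the sole surviving cross term $(1+|X_s^{i,N,n}|^2)^{p_0/2-1}(X_s^{i,N,n}-X_{\kappa_n(s)}^{i,N,n})\,b^n_{\kappa_n(s)}$, the paper \emph{substitutes the scheme} \eqref{eq:euler} for the increment rather than invoking Lemma~\ref{Lemma:onestep}. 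The drift contribution is then of order $n^{-1}|b^n|^2$, which is fine. For the stochastic contributions ($F_1,F_2$ in the paper) one next freezes the outer weight at $X_{\kappa_n(s)}^{i,N,n}$; since both the stale weight and $b^n_{\kappa_n(s)}$ are $\mathcal{F}_{\kappa_n(s)}$-measurable, their product with $\int_{\kappa_n(s)}^s\sigma^n\,dW^i$ has zero expectation. What survives carries an \emph{additional} factor $|X_s^{i,N,n}-X_{\kappa_n(s)}^{i,N,n}|$ from the mean-value expansion \eqref{eq:mvt:mb}, and this extra $n^{-1/4}$ is exactly what balances the books (see the chain of estimates leading to \eqref{eq:F1}). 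Without this martingale-cancellation step your argument cannot close.
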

\begin{proof}
 It\^{o}'s formula gives, 
\begin{align}
 \big(1 + &|X_t^{i,N, n}|^{2} \big)^{p_0/2}  =  \big( 1 + |X_0^{i,N, n}|^{2} \big)^{p_0/2}  \nonumber 
 \\
& + p_0   \int_{0}^{t} \left(1+ |X_{s}^{i,N, n}|^2 \right)^{p_0/2 -1} X_s^{i,N, n} b_{\kappa_n(s)}^n \big(X_{\kappa_n(s)}^{i,N, n}, \mu_{\kappa_n(s)}^{X,N,n}\big) ds \nonumber 
\\
&  + p_0   \int_{0}^{t} \left(1+ |X_{s}^{i,N, n}|^2 \right)^{p_0/2 -1} X_s^{i,N, n} \sigma_{\kappa_n(s)}^n \big(X_{\kappa_n(s)}^{i,N, n}, \mu_{\kappa_n(s)}^{X,N,n}\big) dW_s^i  \nonumber 
\\
& + p_0   \int_{0}^{t} \left(1+ |X_{s}^{i,N, n}|^2 \right)^{p_0/2 -1} X_s^{i,N, n} \sigma_{\kappa_n(s)}^{0,n} \big(X_{\kappa_n(s)}^{i,N, n}, \mu_{\kappa_n(s)}^{X,N,n}\big) dW_s^0  \nonumber 
\\
&  +  \frac{p_0(p_0-2)}{2}  \int_{0}^{t} \left(1+ |X_{s}^{i,N, n}|^2 \right)^{p_0/2 -2} \big|  \sigma_{\kappa_n(s)}^{n,*} \big(X_{\kappa_n(s)}^{i,N,n}, \mu_{\kappa_n(s)}^{X,N,n}\big) X_{s}^{i,N, n}  \big|^2 ds \nonumber 
\\ 
& +  \frac{p_0(p_0-2)}{2}  \int_{0}^{t} \left(1+ |X_{s}^{i,N, n}|^2 \right)^{p_0/2 -2} \big|  \sigma_{\kappa_n(s)}^{0,n,*} \big(X_{\kappa_n(s)}^{i,N,n}, \mu_{\kappa_n(s)}^{X,N,n}\big) X_{s}^{i,N, n}  \big|^2 ds \nonumber 
\\ 
& +  \frac{p_0}{2}  \int_{0}^{t} \left(1+ |X_{s}^{i,N, n}|^2 \right)^{p_0/2 -1} \big|  \sigma_{\kappa_n(s)}^{n} \big(X_{\kappa_n(s)}^{i,N,n}, \mu_{\kappa_n(s)}^{X,N,n}\big) \big|^2 ds \nonumber 
\\ 
&  +  \frac{p_0}{2}  \int_{0}^{t} \left(1+ |X_{s}^{i,N, n}|^2 \right)^{p_0/2 -1} \big|  \sigma_{\kappa_n(s)}^{0,n} \big(X_{\kappa_n(s)}^{i,N,n}, \mu_{\kappa_n(s)}^{X,N,n}\big) \big|^2 ds \notag
\end{align}
almost surely for any $t\in[0,T]$, $i\in\{1,\ldots,N\}$ and $n,N\in\mathbb{N}$.  Thus, on taking expectation and using the Cauchy-Schwarz inequality, one obtains, 
\begin{align}
E \big(1 + & |X_t^{i,N, n}|^{2} \big)^{p_0/2}  \leq   E\big( 1 + |X_0^{i,N, n}|^{2} \big)^{p_0/2} \notag
 \\
& + \frac{p_0}{2}   E \int_{0}^{t} \left(1+ |X_{s}^{i,N, n}|^2 \right)^{p_0/2 -1} \Big\{ 2 X_{\kappa_n(s)}^{i,N, n} b_{\kappa_n(s)}^n \big(X_{\kappa_n(s)}^{i,N, n}, \mu_{\kappa_n(s)}^{X,N,n}\big) \notag
\\
&  +  (p_0-1)   \big|  \sigma_{\kappa_n(s)}^{n} \big(X_{\kappa_n(s)}^{i,N,n}, \mu_{\kappa_n(s)}^{X,N,n}\big)  \big|^2  +  (p_0-1)  \big|  \sigma_{\kappa_n(s)}^{0,n} \big(X_{\kappa_n(s)}^{i,N,n}, \mu_{\kappa_n(s)}^{X,N,n}\big)   \big|^2 \Big\} ds \notag
\\  
& + p_0  E \int_{0}^{t} \left(1+ |X_{s}^{i,N, n}|^2 \right)^{p_0/2 -1} \big( X_s^{i,N, n}  - X_{\kappa_n(s)}^{i,N, n} \big) b_{\kappa_n(s)}^n \big(X_{\kappa_n(s)}^{i,N, n}, \mu_{\kappa_n(s)}^{X,N,n}\big) ds \label{eq:ito:euler}
\end{align}
 for any $t\in[0,T]$, $i\in\{1,\ldots,N\}$ and $n,N\in\mathbb{N}$. From equation \eqref{eq:Taming}, notice that the denominators of $b^n$, $\sigma^n$ and $\sigma^{0,n}$ are the same. Hence, on  using equation \eqref{eq:euler}, Assumption \ref{as:coercivity} and  Young's inequality, one obtains,   
\begin{align}
E \big(1 + &|X_t^{i,N, n}|^{2} \big)^{p_0/2}  \leq E \big( 1 + |X_0^{i,N, n}|^{2} \big)^{p_0/2} \nonumber
\\
& + K E \int_{0}^{t} \left(1+ |X_{s}^{i,N, n}|^2 \right)^{p_0/2 -1} \Big\{ \big(1+\big|X_{\kappa_n(s)}^{i,N, n}\big| \big)^2 + \mathcal{W}_2^{2}\big( \mu_{\kappa_n(s)}^{X,N,n}, \delta_0\big)\Big\} ds \nonumber
 \\
&  + p_0 E \int_{0}^{t} \left(1+ |X_{s}^{i,N, n}|^2 \right)^{p_0/2 -1} \int_{\kappa_n(s)}^s b_{\kappa_n(r)}^n \big(X_{\kappa_n(r)}^{i,N, n}, \mu_{\kappa_n(r)}^{X,N,n}\big) dr  b_{\kappa_n(s)}^n \big(X_{\kappa_n(s)}^{i,N, n}, \mu_{\kappa_n(s)}^{X,N,n}\big) ds \nonumber
 \\
&  + p_0 E \int_{0}^{t} \left(1+ |X_{s}^{i,N, n}|^2 \right)^{p_0/2 -1} \int_{\kappa_n(s)}^s \sigma_{\kappa_n(r)}^n\big(X_{\kappa_n(r)}^{i,N,n}, \mu_{\kappa_n(r)}^{X,N,n}\big) dW_r^{i} b_{\kappa_n(s)}^n \big(X_{\kappa_n(s)}^{i,N, n}, \mu_{\kappa_n(s)}^{X,N,n}\big) ds \nonumber
 \\
&  + p_0 E \int_{0}^{t} \left(1+ |X_{s}^{i,N, n}|^2 \right)^{p_0/2 -1} \int_{\kappa_n(s)}^s \sigma^{0,n}_{\kappa_n(r)}\big(X_{\kappa_n(r)}^{i,N,n}, \mu_{\kappa_n(r)}^{X,N,n}\big) dW_r^{0}  b_{\kappa_n(s)}^n \big(X_{\kappa_n(s)}^{i,N, n}, \mu_{\kappa_n(s)}^{X,N,n}\big) ds \nonumber
\end{align}
which on using Remark \ref{rem:growth:taming} and Young's inequality, 
\begin{align}
E \big(1 + &|X_t^{i,N, n}|^{2} \big)^{p_0/2}  \leq E \big( 1 + |X_0^{i,N, n}|^{2} \big)^{p_0/2} + K  \int_{0}^{t} \sup_{0\leq r\leq s}E\big(1+ |X_{r}^{i,N, n}|^2 \big)^{p_0/2}  ds \nonumber
\\
&   + K  E\int_{0}^{t} \mathcal{W}_2^{p_0}\big( \mu_{\kappa_n(s)}^{X,N,n}, \delta_0\big) ds   + F_1 + F_2  \label{eq:F1+F2}
\end{align}
 for any $t\in[0,T]$, $i\in\{1,\ldots,N\}$ and $n,N\in\mathbb{N}$ where $F_1$ and $F_2$ are defined below. Notice that,  
\begin{align*}
F_1 & := p_0 E \int_{0}^{t} \left(1+ |X_{s}^{i,N, n}|^2 \right)^{p_0/2 -1} \int_{\kappa_n(s)}^s \sigma_{\kappa_n(r)}^n\big(X_{\kappa_n(r)}^{i,N,n}, \mu_{\kappa_n(r)}^{X,N,n}\big) dW_r^{i} b_{\kappa_n(s)}^n \big(X_{\kappa_n(s)}^{i,N, n}, \mu_{\kappa_n(s)}^{X,N,n}\big) ds \nonumber
\\
& \leq p_0 E \int_{0}^{t} \big(1+ |X_{\kappa_n(s)}^{i,N, n}|^2 \big)^{p_0/2 -1} \int_{\kappa_n(s)}^s \sigma_{\kappa_n(r)}^n\big(X_{\kappa_n(r)}^{i,N,n}, \mu_{\kappa_n(r)}^{X,N,n}\big) dW_r^{i} b_{\kappa_n(s)}^n \big(X_{\kappa_n(s)}^{i,N, n}, \mu_{\kappa_n(s)}^{X,N,n}\big) ds \nonumber
\\
&\qquad + p_0 E \int_{0}^{t} \Big|\left(1+ |X_{s}^{i,N, n}|^2 \right)^{p_0/2 -1} - \big(1+ |X_{\kappa_n(s)}^{i,N, n}|^2 \big)^{p_0/2 -1}  \Big|
\\
&  \qquad \qquad \times \Big| \int_{\kappa_n(s)}^s \sigma_{\kappa_n(r)}^n\big(X_{\kappa_n(r)}^{i,N,n}, \mu_{\kappa_n(r)}^{X,N,n}\big) dW_r^{i} b_{\kappa_n(s)}^n \big(X_{\kappa_n(s)}^{i,N, n}, \mu_{\kappa_n(s)}^{X,N,n}\big) ds \Big|\nonumber
\\
& \leq K E \int_{0}^{t} \left(1+ |X_{s}^{i,N, n}|^2 +  |X_{\kappa_n(s)}^{i,N, n}|^2 \right)^{(p_0-3)/2} \big|X_{s}^{i,N, n}- X_{\kappa_n(s)}^{i,N, n}\big|
\\
&  \qquad \times\Big|  \int_{\kappa_n(s)}^s b_{\kappa_n(s)}^n \big(X_{\kappa_n(s)}^{i,N, n}, \mu_{\kappa_n(s)}^{X,N,n}\big) \sigma_{\kappa_n(r)}^n\big(X_{\kappa_n(r)}^{i,N,n}, \mu_{\kappa_n(r)}^{X,N,n}\big) dW_r^{i} \Big| ds \nonumber
\end{align*}
for any $t\in[0,T]$, $i\in\{1,\ldots,N\}$ and $n,N\in\mathbb{N}$ where the last inequality follows due to the following expansion, for some  $\theta\in(0,1)$, 
\begin{align}
\big| (1+|x|^2)^{p_0/2-1}-(1+|y|^2)^{p_0/2-1} \big| & = \big| (p_0-2) (1+|\theta x+ (1-\theta) y|^2)^{p_0/2-2}(\theta x+ (1-\theta) y)(x-y) \big| \notag
\\
& \leq (p_0-2) \big\{1+|x|^2+|y|^2\big\}^{(p_0-3)/2}|x-y| \label{eq:mvt:mb}
\end{align}
for any $x,y\in\mathbb{R}^d$. Therefore, the application of Young's inequality, Burkholder--Davis--Gundy inequality and Lemma \ref{Lemma:onestep}  yields, 
\begin{align}
F_1 \leq &  K \int_{0}^{t}  \sup_{0\leq r\leq s}E \left(1+ |X_{r}^{i,N, n}|^2 \right)^{p_0/2} ds + K E \int_{0}^{t}n^{p_0/12} \big|X_{s}^{i,N, n}- X_{\kappa_n(s)}^{i,N, n}\big|^{p_0/3} \notag
\\
&   \times n^{-p_0/12}\Big|  \int_{\kappa_n(s)}^s b_{\kappa_n(s)}^n \big(X_{\kappa_n(s)}^{i,N, n}, \mu_{\kappa_n(s)}^{X,N,n}\big) \sigma_{\kappa_n(r)}^n\big(X_{\kappa_n(r)}^{i,N,n}, \mu_{\kappa_n(r)}^{X,N,n}\big) dW_r^{i} \Big|^{p_0/3}  ds \notag
\\
\leq & K \int_{0}^{t}  \sup_{0\leq r\leq s}E \left(1+ |X_{r}^{i,N, n}|^2 \right)^{p_0/2} ds + K  \int_{0}^{t}n^{p_0/4} E\big|X_{s}^{i,N, n}- X_{\kappa_n(s)}^{i,N, n}\big|^{p_0} ds \notag
\\
&   + K n^{-p_0/8}  \int_{0}^{t}  E\Big|  \int_{\kappa_n(s)}^s b_{\kappa_n(s)}^n \big(X_{\kappa_n(s)}^{i,N, n}, \mu_{\kappa_n(s)}^{X,N,n}\big) \sigma_{\kappa_n(r)}^n\big(X_{\kappa_n(r)}^{i,N,n}, \mu_{\kappa_n(r)}^{X,N,n}\big) dW_r^{i} \Big|^{p_0/2}  ds \notag
\\
 \leq  & K+ K  \int_{0}^{t}  \sup_{0\leq r\leq s}E \left(1+ |X_{r}^{i,N, n}|^2 \right)^{p_0/2} ds \notag 
\\
& + K n^{-p_0/8} n^{-p_0/4+1} \int_{0}^{t}  E  \int_{\kappa_n(s)}^s \big|b_{\kappa_n(s)}^n \big(X_{\kappa_n(s)}^{i,N, n}, \mu_{\kappa_n(s)}^{X,N,n}\big)\big|^{p_0/2} \big|\sigma_{\kappa_n(r)}^n\big(X_{\kappa_n(r)}^{i,N,n}, \mu_{\kappa_n(r)}^{X,N,n}\big) \big|^{p_0/2}dr  ds \label{eq:F1:ms}
\end{align}
and then one uses Remark \ref{rem:growth:taming}  to obtain the following, 
\begin{align}
F_1  \leq   K \int_{0}^{t}  \sup_{0\leq r\leq s}E \left(1+ |X_{r}^{i,N, n}|^2 \right)^{p_0/2} ds+  KE \int_{0}^{t}   \mathcal{W}_2^{p_0}\big( \mu_{\kappa_n(s)}^{X,N,n}, \delta_0\big) ds \label{eq:F1}
\end{align}
for any $t\in[0,T]$, $i\in\{1,\ldots,N\}$ and $n,N\in\mathbb{N}$. 
By adapting the similar arguments as used in the estimation of $F_1$, one can obtain, 
\begin{align}
F_2:= & p_0 E \int_{0}^{t} \left(1+ |X_{s}^{i,N, n}|^2 \right)^{p_0/2 -1} \int_{\kappa_n(s)}^s \sigma^{0,n}_{\kappa_n(r)}\big(X_{\kappa_n(r)}^{i,N,n}, \mu_{\kappa_n(r)}^{X,N,n}\big) dW_r^{0}  b_{\kappa_n(s)}^n \big(X_{\kappa_n(s)}^{i,N, n}, \mu_{\kappa_n(s)}^{X,N,n}\big) ds \notag
\\
 \leq &  K \int_{0}^{t}  \sup_{0\leq r\leq s}E \left(1+ |X_{r}^{i,N, n}|^2 \right)^{p_0/2} ds+  KE \int_{0}^{t}   \mathcal{W}_2^{p_0 }\big( \mu_{\kappa_n(s)}^{X,N,n}, \delta_0\big) ds, \label{eq:F2}
\end{align}
for any $t\in[0,T]$, $i\in\{1,\ldots,N\}$ and $n,N\in\mathbb{N}$.  

On substituting estimates of equations \eqref{eq:F1} and \eqref{eq:F2} in equation \eqref{eq:F1+F2}, one gets, 
 \begin{align*}
E \big(1 + |X_t^{i,N, n}|^{2} \big)^{p_0/2}  &\leq E \big( 1 + |X_0^{i,N, n}|^{2} \big)^{p_0/2} + K  \int_{0}^{t} \sup_{0\leq r\leq s}E\big(1+ |X_{r}^{i,N, n}|^2 \big)^{p_0/2}  ds 
\\
& \quad  + K  E\int_{0}^{t} \mathcal{W}_2^{p_0}\big( \mu_{\kappa_n(s)}^{X,N,n}, \delta_0\big) ds, 
\end{align*}
 for any $t\in[0,T]$, $i\in\{1,\ldots,N\}$ and $n,N\in\mathbb{N}$. By a simple calculation (see, e.g., Lemma 2.3 in \cite{reis2019b}), one can observe that, 
 \begin{align}
 \mathcal{W}_2^{2}\big( \mu_{\kappa_n(s)}^{X,N,n}, \delta_0\big)=\frac{1}{N} \sum_{j=1}^N \big|X_{\kappa_n(s)}^{i,N,n}\big|^2, \label{eq:w2:mb}
 \end{align}
which on using yields,
 \begin{align*}
\sup_{ i \in\{1,\ldots N\}}\sup_{0\leq r \leq t}  E \big(1 + &|X_r^{i,N, n}|^{2} \big)^{p_0/2}  \leq E \big( 1 + |X_0^{i,N, n}|^{2} \big)^{p_0/2} + K  \int_{0}^{t} \sup_{ i \in\{1,\ldots N\}}\sup_{0\leq r \leq s} E\left(1+ |X_{r}^{i,N, n}|^2 \right)^{p_0/2}  ds
 \end{align*}
 and  the application of Gronwall's inequality completes the proof of the first inequality. The second inequality follows due to Lemma \ref{lem:gk}.  
\end{proof}
Before proceeding with the proof of rate of convergence of the tamed Euler scheme \eqref{eq:euler}, we establish some lemmas as given below. 
\begin{lem} \label{lem:one-step:euler}
Let Assumptions \ref{as:x0}, \ref{as:coercivity}, \ref{as:monotonicity:rate} and \ref{as:polynomial:Lipschitz} hold. Then, 
\begin{align*}
E\big| X_s^{i,N,n}-  X_{\kappa_n(s)}^{i,N,n}\big|^{p} \leq K n^{-{p}/2},
\end{align*}
for any $i\in\{1,\ldots,N\}$, $s\in[0,T]$ and $n, N\in\mathbb{N}$ where the constant $K>0$ does not depend on $n$ or $N$. 
\end{lem}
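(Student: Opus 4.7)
The plan is to decompose the one-step increment directly from \eqref{eq:euler} as
\begin{equation*}
X_s^{i,N,n} - X_{\kappa_n(s)}^{i,N,n} = \int_{\kappa_n(s)}^s b^n_{\kappa_n(u)}\bigl(X^{i,N,n}_{\kappa_n(u)},\mu^{X,N,n}_{\kappa_n(u)}\bigr) du + \int_{\kappa_n(s)}^s \sigma^n_{\kappa_n(u)}(\cdots)\, dW^i_u + \int_{\kappa_n(s)}^s \sigma^{0,n}_{\kappa_n(u)}(\cdots)\, dW^0_u
\end{equation*}
and estimate the three pieces separately. Applying H\"older's inequality to the drift integral and the Burkholder--Davis--Gundy inequality together with H\"older to the two stochastic integrals (noting $s-\kappa_n(s)\leq h := T/n$), one obtains, for $p\geq 2$,
\begin{equation*}
E\bigl|X_s^{i,N,n}-X_{\kappa_n(s)}^{i,N,n}\bigr|^{p} \leq K h^{p-1}\!\!\int_{\kappa_n(s)}^{s}\!\!E\bigl|b^n_{\kappa_n(u)}(\cdots)\bigr|^{p}du + K h^{p/2-1}\!\!\int_{\kappa_n(s)}^{s}\!\!E\bigl(|\sigma^n_{\kappa_n(u)}(\cdots)|^{p} + |\sigma^{0,n}_{\kappa_n(u)}(\cdots)|^{p}\bigr)du.
\end{equation*}

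The crucial point, which distinguishes this estimate from Lemma~\ref{Lemma:onestep}, is that the drift and the diffusion terms should be controlled by \emph{different} bounds from Remarks~\ref{rem:poly:growth} and~\ref{rem:growth:taming}. For the drift I would use the tamed bound $|b^n_t(x,\mu)|\leq K n^{1/2}(1+|x|+\mathcal{W}_2(\mu,\delta_0))$, so that the prefactor $h^{p-1}\cdot h \cdot n^{p/2} = n^{-p/2}$ is exactly of the desired order. For the two diffusion integrals, the tamed bound only yields $n^{-p/4}$, which is insufficient; instead, using $|\sigma^n|\leq|\sigma|$ and $|\sigma^{0,n}|\leq|\sigma^0|$ together with the polynomial growth of the original coefficients $|\sigma_t(x,\mu)|+|\sigma^0_t(x,\mu)|\leq K((1+|x|)^{\rho/4+1}+\mathcal{W}_2(\mu,\delta_0))$ gives a prefactor $h^{p/2-1}\cdot h = n^{-p/2}$, as required.

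It then remains to bound the resulting expectations $E(1+|X^{i,N,n}_{\kappa_n(u)}|)^{p(\rho/4+1)}$ (from the diffusions), $E(1+|X^{i,N,n}_{\kappa_n(u)}|)^{p}$ (from the drift), and the analogous $\mathcal{W}_2$ moments, uniformly in $n,N$. The moment bound Lemma~\ref{lem:mb:euler} handles the individual-particle moments up to order $p_0$, while the identity \eqref{eq:w2:mb} converts any $\mathcal{W}_2(\mu^{X,N,n}_{\kappa_n(u)},\delta_0)^q$ (for $q\geq 2$) into $N^{-1}\sum_j|X^{j,N,n}_{\kappa_n(u)}|^q$ via the power-mean inequality, so that exchangeability yields a uniform bound. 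This works provided the exponent $p(\rho/4+1)$ does not exceed $p_0$, which will be the implicit constraint on $p$.

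The main obstacle, and the only subtlety beyond standard Euler-increment bookkeeping, is the asymmetric choice of bound for the drift versus the diffusion: one must resist the temptation to apply the tamed bound uniformly (which wastes a factor $n^{p/4}$ on the diffusion side) and instead exploit that the tamed coefficients are pointwise dominated by the original coefficients whose polynomial growth in $x$ is integrable against the moment estimate of Lemma~\ref{lem:mb:euler}. Once this trade-off is recognised, the remainder of the argument is routine.
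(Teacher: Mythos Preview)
Your proposal is correct and matches the paper's approach: the paper's proof is a one-line reference to replicating Lemma~\ref{Lemma:onestep} but replacing the tamed growth bounds of Remark~\ref{rem:growth:taming} by the polynomial growth of Remark~\ref{rem:poly:growth} together with the moment bound Lemma~\ref{lem:mb:euler}, which is precisely the trade-off you identify for the diffusion terms. The only cosmetic difference is that you keep the tamed bound $|b^n|\leq K n^{1/2}(1+|x|+\mathcal{W}_2(\mu,\delta_0))$ for the drift, whereas the paper (implicitly) uses $|b^n|\leq|b|$ with Remark~\ref{rem:poly:growth} there as well, yielding an even smaller $n^{-p}$ contribution; either choice suffices, and yours imposes the slightly weaker moment requirement $p(\rho/4+1)\leq p_0$ rather than $p(\rho/2+1)\leq p_0$.
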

\begin{proof}
The proof follows by replicating the proof of Lemma \ref{Lemma:onestep} and then using Remark \ref{rem:poly:growth} and Lemma~ \ref{lem:mb:euler}. 
\end{proof}
\begin{lem} \label{lem:euler:rate:coeff}
Let Assumptions \ref{as:x0}, \ref{as:coercivity}, \ref{as:monotonicity:rate}, \ref{as:polynomial:Lipschitz} and \ref{as:holder} hold. Then, 
\begin{align*}
E  \big|b_s\big(X_s^{i,N, n}, \mu_s^{X,N, n}\big) -b_{\kappa_n(s)}^n\big(X_{\kappa_n(s)}^{i,N,n}, \mu_{\kappa_n(s)}^{X,N,n}\big) \big|^p \leq K n^{-p/2},
\\
E  \big|\sigma_s\big(X_s^{i,N, n}, \mu_s^{X,N, n}\big) -\sigma_{\kappa_n(s)}^n\big(X_{\kappa_n(s)}^{i,N,n}, \mu_{\kappa_n(s)}^{X,N,n}\big) \big|^p \leq K n^{-p/2},
\\
E  \big|\sigma_s^0\big(X_s^{i,N, n}, \mu_s^{X,N, n}\big) -\sigma_{\kappa_n(s)}^{n,0}\big(X_{\kappa_n(s)}^{i,N,n}, \mu_{\kappa_n(s)}^{X,N,n}\big) \big|^p \leq K n^{-p/2},
\end{align*}
for any $s\in[0,T]$, $i\in\{1,\ldots, N\}$ and $n,N\in\mathbb{N}$ where the constant $K>0$ does not depend on $n$ and $N$. 
\end{lem}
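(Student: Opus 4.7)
The plan is to handle each of the three estimates in a unified way by decomposing the error into three pieces and then bounding each piece separately in $L^p$. For the drift we write
\begin{align*}
b_s\big(X_s^{i,N,n}, \mu_s^{X,N,n}\big) - b_{\kappa_n(s)}^n\big(X_{\kappa_n(s)}^{i,N,n}, \mu_{\kappa_n(s)}^{X,N,n}\big) &= A_1 + A_2 + A_3,
\end{align*}
where
\begin{align*}
A_1 &:= b_s\big(X_s^{i,N,n}, \mu_s^{X,N,n}\big) - b_s\big(X_{\kappa_n(s)}^{i,N,n}, \mu_{\kappa_n(s)}^{X,N,n}\big), \\
A_2 &:= b_s\big(X_{\kappa_n(s)}^{i,N,n}, \mu_{\kappa_n(s)}^{X,N,n}\big) - b_{\kappa_n(s)}\big(X_{\kappa_n(s)}^{i,N,n}, \mu_{\kappa_n(s)}^{X,N,n}\big), \\
A_3 &:= b_{\kappa_n(s)}\big(X_{\kappa_n(s)}^{i,N,n}, \mu_{\kappa_n(s)}^{X,N,n}\big) - b_{\kappa_n(s)}^n\big(X_{\kappa_n(s)}^{i,N,n}, \mu_{\kappa_n(s)}^{X,N,n}\big).
\end{align*}

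For $A_1$, I would apply Assumption \ref{as:polynomial:Lipschitz} to get
$|A_1|\leq L (1+|X_s^{i,N,n}|+|X_{\kappa_n(s)}^{i,N,n}|)^{\rho/2}\,|X_s^{i,N,n}-X_{\kappa_n(s)}^{i,N,n}| + L\,\mathcal{W}_2(\mu_s^{X,N,n}, \mu_{\kappa_n(s)}^{X,N,n})$, then take $p$-th powers, apply H\"older's inequality, and control the polynomial factor via Lemma \ref{lem:mb:euler} and the increment via Lemma \ref{lem:one-step:euler}. The Wasserstein term is handled using the synchronous coupling bound $\mathcal{W}_2^2(\mu_s^{X,N,n}, \mu_{\kappa_n(s)}^{X,N,n}) \leq \frac{1}{N}\sum_{j=1}^N |X_s^{j,N,n}-X_{\kappa_n(s)}^{j,N,n}|^2$, which combined with exchangeability and Lemma \ref{lem:one-step:euler} yields an $O(n^{-p/2})$ bound. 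The term $A_2$ is immediate from the time-H\"older Assumption \ref{as:holder} since $|s-\kappa_n(s)|\leq 1/n$.

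For the taming error $A_3$, the key identity is
\begin{align*}
b_{\kappa_n(s)}(x,\mu) - b_{\kappa_n(s)}^n(x,\mu) = b_{\kappa_n(s)}(x,\mu)\,\frac{n^{-1/2}|x|^{\rho/2}}{1+n^{-1/2}|x|^{\rho/2}},
\end{align*}
so $|A_3|\leq n^{-1/2}|X_{\kappa_n(s)}^{i,N,n}|^{\rho/2}\,|b_{\kappa_n(s)}(X_{\kappa_n(s)}^{i,N,n},\mu_{\kappa_n(s)}^{X,N,n})|$. Plugging in the polynomial growth of Remark \ref{rem:poly:growth} and taking $L^p$-norms gives a bound proportional to $n^{-p/2}$ times a moment of $(1+|X_{\kappa_n(s)}^{i,N,n}|)^{(\rho+2)\cdot p/2}$ and $\mathcal{W}_2(\mu_{\kappa_n(s)}^{X,N,n},\delta_0)^p$, both finite by Lemma \ref{lem:mb:euler} provided $p_0$ is taken large enough in the application (otherwise the statement should be read as implicitly requiring sufficient moments).

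The arguments for $\sigma$ and $\sigma^0$ are entirely analogous, with Assumption \ref{as:polynomial:Lipschitz} replaced by Remark \ref{rem:poly:lipschitz} (exponent $\rho/4$) and the corresponding growth estimate from Remark \ref{rem:poly:growth}. The only real technical obstacle is book-keeping the H\"older exponents so that all moments invoked are controlled by Lemma \ref{lem:mb:euler}; once the moment budget is verified, each piece contributes $O(n^{-p/2})$ and summing yields the claim.
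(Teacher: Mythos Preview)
Your proposal is correct and matches the paper's own proof essentially line by line: the paper uses the identical three-term decomposition $A_1+A_2+A_3$, bounds $A_1$ via Assumption~\ref{as:polynomial:Lipschitz} together with H\"older's inequality, Lemma~\ref{lem:mb:euler}, Lemma~\ref{lem:one-step:euler} and the synchronous coupling bound on $\mathcal{W}_2$, bounds $A_2$ by Assumption~\ref{as:holder}, bounds $A_3$ via the taming identity and Remark~\ref{rem:poly:growth}, and then remarks that $\sigma$ and $\sigma^0$ are handled analogously using Remark~\ref{rem:poly:lipschitz}. Your caveat about the moment budget (needing $p_0$ large enough relative to $p$ and $\rho$) is also implicit in the paper's use of H\"older's inequality with Lemma~\ref{lem:mb:euler}.
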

\begin{proof}
On using Remark \ref{rem:poly:lipschitz} and Assumption \ref{as:holder}, one obtains, 
\begin{align*}
E  \big|b_s\big(X_s^{i,N, n},& \mu_s^{X,N, n}\big) -b_{\kappa_n(s)}^n\big(X_{\kappa_n(s)}^{i,N,n}, \mu_{\kappa_n(s)}^{X,N,n}\big) \big|^p \leq K E  \big|b_s\big(X_s^{i,N, n}, \mu_s^{X,N, n}\big) -b_{s}\big(X_{\kappa_n(s)}^{i,N,n}, \mu_{\kappa_n(s)}^{X,N,n}\big) \big|^p
\\
& + K E  \big|b_{s}\big(X_{\kappa_n(s)}^{i,N,n}, \mu_{\kappa_n(s)}^{X,N,n}\big)- b_{\kappa_n(s)}\big(X_{\kappa_n(s)}^{i,N,n}, \mu_{\kappa_n(s)}^{X,N,n}\big)\big|^p
\\
& +K E  \big| b_{\kappa_n(s)}\big(X_{\kappa_n(s)}^{i,N,n}, \mu_{\kappa_n(s)}^{X,N,n}\big) - b^n_{\kappa_n(s)}\big(X_{\kappa_n(s)}^{i,N,n}, \mu_{\kappa_n(s)}^{X,N,n}\big)\big|^p 
\\
 \leq & K E\big(1+ \big|X_s^{i,N, n}\big|+ \big|X_{\kappa_n(s)}^{i,N, n}\big|\big)^{\rho p/2}\big| X_s^{i,N, n} - X_{\kappa_n(s)}^{i,N, n}\big|^p + KE \mathcal{W}_2^p\big(\mu_s^{X,N, n}, \mu_{\kappa_n(s)}^{X,N,n} \big) 
 \\
 &+ K |s-\kappa_n(s)|^{p/2} + K n^{-p}
\end{align*}
which on the application of H\"older's inequality, Lemma \ref{lem:mb:euler}, Lemma \ref{lem:one-step:euler} and the following elementary estimate, 
\begin{align}
\mathcal{W}_2^2\big(\mu_s^{X,N, n}, \mu_{\kappa_n(s)}^{X,N,n} \big) \leq \frac{1}{N}\sum_{j=1}^n \big|X_s^{i,N,n}-X_{\kappa_n(s)}^{i,N,n}\big|^2, \label{eq:w2:scheme}
\end{align}
proves the estimate for $b$. The proof is completed by performing  similar calculations for $\sigma$ and $\sigma^0$.  
\end{proof}
The following is the main result of this section. 
\begin{thm}[\textbf{Rate of Convergence}] \label{thm:rate:euler}
Let Assumptions \ref{as:x0},  \ref{as:coercivity}, \ref{as:monotonicity:rate}, \ref{as:polynomial:Lipschitz} and \ref{as:holder} be satisfied. Then, the tamed Euler scheme \eqref{eq:euler} converges to the true solution of the interacting particle system \eqref{eq:interacting} associated with \eqref{eq:MVSDE} in a strong sense with $L^p$ rate of convergence given by, 
\begin{align*}
\sup_{i\in\{1,\ldots, N\}}E\sup_{t\in[0,T]} |X_t^{i,N}-X_t^{i,N,n}|^p \leq K n^{-p/2},
\end{align*} 
for any $p<\min\{p_1, p_0/(\rho+1)\}$, where the constant $K>0$ does not depend on $n, N\in\mathbb{N}$.  
\end{thm}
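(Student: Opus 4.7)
The plan is to combine exchangeability of the particle system, It\^o's formula applied to a power of the error process, and Gronwall's inequality, in the spirit of the classical tamed Euler analysis for SDEs with super-linear coefficients. Since the true and approximate particle systems are driven by the same Brownian motions and share the same initial data, the error processes $e_t^i:=X_t^{i,N}-X_t^{i,N,n}$ are exchangeable in $i$, so the claimed bound follows once it is established for $i=1$; throughout I write $e_t:=e_t^1$.

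Fix some $\tilde p\in(p,\min\{p_1,p_0/(\rho+1)\})$ and apply It\^o's formula to $(1+|e_t|^2)^{\tilde p/2}$. Each of $\Delta b_s$, $\Delta\sigma_s$, $\Delta\sigma_s^0$ is split into a ``monotone'' part such as $b_s(X_s^{1,N},\mu_s^{X,N})-b_s(X_s^{1,N,n},\mu_s^{X,N,n})$, and a ``one-step'' part such as $b_s(X_s^{1,N,n},\mu_s^{X,N,n})-b_{\kappa_n(s)}^n(X_{\kappa_n(s)}^{1,N,n},\mu_{\kappa_n(s)}^{X,N,n})$. The monotone pieces are treated jointly by Assumption \ref{as:monotonicity:rate}: since $\tilde p-1<p_1-1$, the $(\tilde p-1)$ appearing in the It\^o correction is dominated by the $(p_1-1)$ of the assumption, so the drift contribution and diffusion quadratic variation combine to a clean upper bound by $|e_s|^2+\mathcal{W}_2^2(\mu_s^{X,N},\mu_s^{X,N,n})$. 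The one-step pieces enter multiplied by $|e_s|^{\tilde p-1}$ (from drift) or $|e_s|^{\tilde p-2}$ (from the diffusion correction); Young's inequality converts each such product into $K|e_s|^{\tilde p}+K n^{-\tilde p/2}$, invoking Lemma \ref{lem:euler:rate:coeff} for the $L^{\tilde p}$ control of the one-step errors.

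The measure term is handled by the empirical-coupling estimate $\mathcal{W}_2^2(\mu_s^{X,N},\mu_s^{X,N,n})\leq N^{-1}\sum_{j=1}^N|e_s^j|^2$; raising to the $\tilde p/2$ power with Jensen and using particle exchangeability gives $E\,\mathcal{W}_2^{\tilde p}(\mu_s^{X,N},\mu_s^{X,N,n})\leq E|e_s|^{\tilde p}$, so the measure contribution is reabsorbed into the running error. Taking $E\sup_{t\leq T}$ in the It\^o expansion, I control the $dW^1$ and $dW^0$ stochastic integrals by Burkholder--Davis--Gundy, split their quadratic variations once more into monotone and one-step parts, and use Young's inequality to absorb a small multiple of $E\sup_{t\leq T}(1+|e_t|^2)^{\tilde p/2}$ into the left-hand side. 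The surviving inequality has the form $E\sup_{r\leq t}(1+|e_r|^2)^{\tilde p/2}\leq K\int_0^t E\sup_{r\leq s}(1+|e_r|^2)^{\tilde p/2}\,ds+K n^{-\tilde p/2}$; Gronwall then yields $E\sup_{t\leq T}|e_t|^{\tilde p}\leq K n^{-\tilde p/2}$, and Jensen's inequality produces the desired $L^p$ bound for every $p<\tilde p$.

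The main obstacle I expect is the BDG step. The ``good'' piece of $\Delta\sigma$ is only polynomially Lipschitz in the state (Remark \ref{rem:poly:lipschitz}), so the quadratic variation of the martingale part involves the factor $(1+|X_s^{1,N}|+|X_s^{1,N,n}|)^{\rho/2}$; controlling it through H\"older and the $p_0$-moment bounds of Lemma \ref{lem:mb:euler} is precisely what forces $\tilde p<p_0/(\rho+1)$, the same arithmetic constraint already present in Lemma \ref{lem:euler:rate:coeff}. Aside from this bookkeeping, the proof reduces to the standard tamed-Euler template for classical SDEs, with the interaction between particles absorbed via the empirical-coupling estimate above and the exchangeability reduction to $i=1$.
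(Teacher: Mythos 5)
Your plan has a genuine gap at precisely the step you flag as the ``main obstacle.'' Taking $E\sup_{t\le T}$ directly in the It\^o expansion and bounding the martingale term by Burkholder--Davis--Gundy forces you, after Young's inequality, to pay an \emph{additional} multiple $C$ (coming from the BDG constant and the Young split) of the quadratic‐variation integral $E\int_0^T(1+|e_s|^2)^{\tilde p/2-1}|\Delta\sigma_s|^2\,ds$. The ``monotone'' piece of $|\Delta\sigma_s|^2$ carries the super-linear factor $(1+|X^{1,N}_s|+|X^{1,N,n}_s|)^{\rho/2}$ and therefore cannot be Gronwalled in isolation; the only way to kill it is to combine it with the drift term through Assumption~\ref{as:monotonicity:rate}. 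But that assumption supplies at most a coefficient $(p_1-1)$ on $|\Delta\sigma|^2$, whereas after It\^o plus BDG-plus-Young you are sitting on a coefficient of order $(\tilde p-1)+C$. The absorption thus requires $\tilde p-1+C\le p_1-1$, i.e.\ $\tilde p\le p_1-C$, so the Gronwall closes only for $\tilde p$ bounded strictly below $p_1$. Your final Jensen step then yields the claim only for $p<p_1-C$, \emph{not} for all $p<\min\{p_1,p_0/(\rho+1)\}$ as the theorem asserts. This is the exact tension that the ``$\rho=0$'' (globally Lipschitz diffusion) literature does not see, since there $|\Delta\sigma^{\mathrm{mono}}_s|^2\lesssim|e_s|^2+\mathcal{W}_2^2$ can be absorbed into the Gronwall without invoking the monotonicity assumption at all, and hence without any coefficient bookkeeping.

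The paper circumvents this by never applying BDG in the error estimate. It applies It\^o to $|e_t|^p$, takes plain expectation (so the martingale term vanishes), uses Assumption~\ref{as:monotonicity:rate} to cancel the It\^o corrections against the drift at exactly the coefficients that appear, invokes Lemma~\ref{lem:euler:rate:coeff} for the one-step terms, and runs Gronwall. This gives $\sup_{t}E|e_t|^{\tilde p}\le Kn^{-\tilde p/2}$ for \emph{every} $\tilde p<\min\{p_1,p_0/(\rho+1)\}$. The upgrade to $E\sup_{t}|e_t|^p$ is then accomplished by Lemma~\ref{lem:gk} (Gy\"ongy--Krylov), which converts $E f_\tau\le E g_\tau$ for all stopping times into $E\sup_{t\le\tau}f_t^\gamma\le C\,Eg_\tau^\gamma$, $\gamma\in(0,1)$, at the cost of a harmless shrinking of the exponent. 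Your proposal omits Lemma~\ref{lem:gk} entirely and, as a consequence, cannot reach $p$ arbitrarily close to $p_1$. To repair the proof, replace the ``take $E\sup$ and apply BDG'' step with ``take $E$ at a stopping time, Gronwall, then invoke Lemma~\ref{lem:gk}.'' (Your exchangeability reduction to $i=1$, the decomposition into monotone and one-step pieces, the use of the empirical Wasserstein estimate, and the invocation of Lemma~\ref{lem:euler:rate:coeff} all match the paper and are fine.)
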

\begin{proof}
Let us first assume that $p<\min\{p_1, p_0/(\rho+1)\}$. From equation \eqref{eq:interacting} and \eqref{eq:euler}, 
\begin{align}
X_t^{i,N} & - X_t^{i,N, n} =  \int_0^t \big( b_s\big(X_s^{i,N}, \mu_s^{X,N}\big) - b_{\kappa_n(s)}^n \big(X_{\kappa_n(s)}^{i,N, n},  \mu_{\kappa_n(s)}^{X,N,n}\big) \big) ds  \notag
\\
&+ \int_0^t \big(\sigma_s\big(X_s^{i,N}, \mu_s^{X,N}\big) -\sigma_{\kappa_n(s)}^n\big(X_{\kappa_n(s)}^{i,N,n}, \mu_{\kappa_n(s)}^{X,N,n}\big) \big) dW_s^{i} \notag
\\
&+ \int_0^t \big(\sigma^0_s\big(X_s^{i,N}, \mu_s^{X,N}\big) - \sigma^{0,n}_{\kappa_n(s)}\big(X_{\kappa_n(s)}^{i,N,n}, \mu_{\kappa_n(s)}^{X,N,n}\big)  \big)dW_s^{0} \label{eq:true-euler}
\end{align}
almost surely for any $t\in[0,T]$, $i\in\{1,\ldots,N\}$ and $n, N\in\mathbb{N}$.  By It\^o's formula, 
\begin{align*}
 \big |&X_t^{i,N}  - X_t^{i,N, n} \big|^p 
\\
= & p \int_0^t \big|X_s^{i,N}  - X_s^{i,N, n} \big|^{p-2} \big(X_s^{i,N}  - X_s^{i,N, n} \big)\big(b_s\big(X_s^{i,N}, \mu_s^{X,N}\big) - b_{\kappa_n(s)}^n \big(X_{\kappa_n(s)}^{i,N, n},  \mu_{\kappa_n(s)}^{X,N,n}\big) \big) ds  
\\
&+ p \int_0^t \big|X_s^{i,N}  - X_s^{i,N, n} \big|^{p-2} \big(X_s^{i,N}  - X_s^{i,N, n} \big) \big(\sigma_s\big(X_s^{i,N}, \mu_s^{X,N}\big) -\sigma_{\kappa_n(s)}^n\big(X_{\kappa_n(s)}^{i,N,n}, \mu_{\kappa_n(s)}^{X,N,n}\big) \big) dW_s^{i} 
\\
&+p \int_0^t \big|X_s^{i,N}  - X_s^{i,N, n} \big|^{p-2} \big(X_s^{i,N}  - X_s^{i,N, n} \big) \big(\sigma^0_s\big(X_s^{i,N}, \mu_s^{X,N}\big) - \sigma^{0,n}_{\kappa_n(s)}\big(X_{\kappa_n(s)}^{i,N,n}, \mu_{\kappa_n(s)}^{X,N,n}\big)  \big)dW_s^{0}
\\
& + \frac{p(p-2)}{2} \int_0^t \big|X_s^{i,N}  - X_s^{i,N, n} \big|^{p-4}  \big|\big(\sigma_s\big(X_s^{i,N}, \mu_s^{X,N}\big) -\sigma_{\kappa_n(s)}^n\big(X_{\kappa_n(s)}^{i,N,n}, \mu_{\kappa_n(s)}^{X,N,n}\big)\big)^*  \big(X_s^{i,N}  - X_s^{i,N, n} \big)\big|^2 ds
\\
& + \frac{p(p-2)}{2} \int_0^t \big|X_s^{i,N}  - X_s^{i,N, n} \big|^{p-4}  \big|\big(\sigma_s\big(X_s^{i,N}, \mu_s^{X,N}\big) -\sigma_{\kappa_n(s)}^n\big(X_{\kappa_n(s)}^{i,N,n}, \mu_{\kappa_n(s)}^{X,N,n}\big)\big)^*  \big(X_s^{i,N}  - X_s^{i,N, n} \big)\big|^2 ds
\\
&+ \frac{p}{2} \int_0^t  \big|X_s^{i,N}  - X_s^{i,N, n} \big|^{p-2} \big|\sigma_s^0\big(X_s^{i,N}, \mu_s^{X,N}\big) -\sigma_{\kappa_n(s)}^{0,n}\big(X_{\kappa_n(s)}^{i,N,n}, \mu_{\kappa_n(s)}^{X,N,n}\big) \big|^2 ds 
\\
&+\frac{p}{2} \int_0^t \big|X_s^{i,N}  - X_s^{i,N, n} \big|^{p-2}  \big|\sigma^0_s\big(X_s^{i,N}, \mu_s^{X,N}\big) - \sigma^{0,n}_{\kappa_n(s)}\big(X_{\kappa_n(s)}^{i,N,n}, \mu_{\kappa_n(s)}^{X,N,n}\big)  \big|^2 ds 
\end{align*}
which on taking expectation along with the Cauchy-Schwarz inequality yields, 
\begin{align}
 E\big|&X_t^{i,N}  - X_t^{i,N, n} \big|^p \leq \frac{p}{2} E\int_0^t \big|X_s^{i,N}  - X_s^{i,N, n} \big|^{p-2} \Big\{ 2\big(X_s^{i,N}  - X_s^{i,N, n} \big)  \notag
\\
& \qquad\big(b_s\big(X_s^{i,N}, \mu_s^{X,N}\big)- b_s \big(X_s^{i,N, n},  \mu_s^{X,N, n}\big) + b_s \big(X_s^{i,N, n},  \mu_s^{X,N, n}\big) - b_{\kappa_n(s)}^n \big(X_{\kappa_n(s)}^{i,N, n},  \mu_{\kappa_n(s)}^{X,N,n}\big) \big)   \notag
\\
&+(p-1)  \big|\sigma_s\big(X_s^{i,N}, \mu_s^{X,N}\big)- \sigma_s\big(X_s^{i,N, n}, \mu_s^{X,N, n}\big)+ \sigma_s\big(X_s^{i,N, n}, \mu_s^{X,N, n}\big) -\sigma_{\kappa_n(s)}^n\big(X_{\kappa_n(s)}^{i,N,n}, \mu_{\kappa_n(s)}^{X,N,n}\big) \big|^2 \notag
\\
&+ (p-1)  \big|\sigma^0_s\big(X_s^{i,N}, \mu_s^{X,N}\big)-\sigma^0_s\big(X_s^{i,N, n}, \mu_s^{X,N, n}\big) + \sigma^0_s\big(X_s^{i,N, n}, \mu_s^{X,N, n}\big)  - \sigma^{0,n}_{\kappa_n(s)}\big(X_{\kappa_n(s)}^{i,N,n}, \mu_{\kappa_n(s)}^{X,N,n}\big)  \big|^2 \Big\} ds \notag 
\end{align}
and then using Young's inequality yields, 
\begin{align*}
 E\big|&X_t^{i,N}  - X_t^{i,N, n} \big|^p \leq \frac{p}{2} E\int_0^t \big|X_s^{i,N}  - X_s^{i,N, n} \big|^{p-2} \Big\{ 2 \big(X_s^{i,N}  - X_s^{i,N, n} \big)\big(b_s\big(X_s^{i,N}, \mu_s^{X,N}\big)- b_s \big(X_s^{i,N, n},  \mu_s^{X,N, n}\big) 
\\
& + (p_1-1)\big|\sigma_s\big(X_s^{i,N}, \mu_s^{X,N}\big)- \sigma_s\big(X_s^{i,N, n}, \mu_s^{X,N, n}\big)\big|^2 
\\
& + (p_1-1) \big|\sigma^0_s\big(X_s^{i,N}, \mu_s^{X,N}\big)-\sigma^0_s\big(X_s^{i,N, n}, \mu_s^{X,N, n}\big) \big|^2\Big\} ds 
\\
& + K E\int_0^t \big|X_s^{i,N}  - X_s^{i,N, n} \big|^{p-2}  \big(X_s^{i,N}  - X_s^{i,N, n} \big) \big(b_s \big(X_s^{i,N, n},  \mu_s^{X,N, n}\big) - b_{\kappa_n(s)}^n \big(X_{\kappa_n(s)}^{i,N, n},  \mu_{\kappa_n(s)}^{X,N,n}\big) \big) ds  
\\
&+K E \int_0^t \big|X_s^{i,N}  - X_s^{i,N, n} \big|^{p-2}   \big|\sigma_s\big(X_s^{i,N, n}, \mu_s^{X,N, n}\big) -\sigma_{\kappa_n(s)}^n\big(X_{\kappa_n(s)}^{i,N,n}, \mu_{\kappa_n(s)}^{X,N,n}\big) \big|^2 ds 
\\
&+ K E\int_0^t  \big|X_s^{i,N}  - X_s^{i,N, n} \big|^{p-2}  \big| \sigma^0_s\big(X_s^{i,N, n}, \mu_s^{X,N, n}\big)  - \sigma^{0,n}_{\kappa_n(s)}\big(X_{\kappa_n(s)}^{i,N,n}, \mu_{\kappa_n(s)}^{X,N,n}\big)  \big|^2 ds
\end{align*}
for any $t\in[0,T]$, $i\in\{1,\ldots,N\}$ and $n, N\in\mathbb{N}$. By using the Cauchy-Schwarz inequality, Young's inequality and Assumption \ref{as:monotonicity:rate}, one obtains, 
\begin{align}
E\big|X_t^{i,N}  - X_t^{i,N, n} \big|^p \leq & E\int_0^t \big|X_s^{i,N}  - X_s^{i,N, n} \big|^p ds + E\int_0^t \mathcal{W}_2^p\big(\mu_s^{X,N},  \mu_s^{X,N, n}\big)  ds \notag
\\
&+K E \int_0^t  \big|b_s\big(X_s^{i,N, n}, \mu_s^{X,N, n}\big) -b_{\kappa_n(s)}^n\big(X_{\kappa_n(s)}^{i,N,n}, \mu_{\kappa_n(s)}^{X,N,n}\big) \big|^p ds \notag
\\
&+K E \int_0^t  \big|\sigma_s\big(X_s^{i,N, n}, \mu_s^{X,N, n}\big) -\sigma_{\kappa_n(s)}^n\big(X_{\kappa_n(s)}^{i,N,n}, \mu_{\kappa_n(s)}^{X,N,n}\big) \big|^p ds \notag
\\
&+ K E\int_0^t  \big| \sigma^0_s\big(X_s^{i,N, n}, \mu_s^{X,N, n}\big)  - \sigma^{0,n}_{\kappa_n(s)}\big(X_{\kappa_n(s)}^{i,N,n}, \mu_{\kappa_n(s)}^{X,N,n}\big)  \big|^p ds, \label{eq:ito:est:euler}
\end{align}
which on the application of Lemma \ref{lem:euler:rate:coeff} along with the following elementary estimate 
\begin{align}
 \mathcal{W}_2^2 \big(\mu_s^{X,N},  \mu_s^{X,N, n}\big) \leq \frac{1}{N}\sum_{j=1}^N  \big|X_s^{i,N}- X_s^{i,N, n}\big|^2,  \label{eq:w2:particle}
\end{align} 
yields, 
\begin{align*}
\sup_{i\in\{1,\ldots,N\}}\sup_{0\leq r\leq t} E\big|X_r^{i,N} & - X_r^{i,N, n} \big|^p \leq \int_0^t \sup_{i\in\{1,\ldots,N\}}\sup_{0\leq r\leq s}E \big|X_r^{i,N}  - X_r^{i,N, n} \big|^p ds  + K n^{-p/2},
\end{align*}
for any $s\in[0,T]$, $i\in\{1,\ldots,N\}$ and $n, N\in\mathbb{N}$.  
The application of Gronwall's inequality gives,
\begin{align*}
\sup_{i\in\{1,\ldots,N\}}\sup_{0\leq r\leq T}  E\big|X_t^{i,N} & - X_t^{i,N, n} \big|^p \leq  K n^{-p/2},
\end{align*}
for any $p<\min\{p_1, p_0/(\rho+1)\}$, $s\in[0,T]$, $i\in\{1,\ldots,N\}$ and $n, N\in\mathbb{N}$.  
Then, the application of Lemma \ref{lem:gk} completes the proof. 
\end{proof}
\section{Tamed Milstein Scheme} 
In this section, we propose a tamed Milstein scheme for the interacting particle system \eqref{eq:interacting}  associated with  McKean--Vlasov SDE  \eqref{eq:MVSDE} when the coefficients $b$, ~$\sigma$ and ~$\sigma^0$ are allowed to grow super-linearly. 
The proposed Milstein scheme is given below in equation \eqref{eq:milstein}.
Note that we use the same notation $X^{i,N,n}$ for the tamed Milstein scheme  \eqref{eq:milstein} and the tamed Euler scheme \eqref{eq:euler} (discussed in Section 3) which should not cause any confusion in the reader's mind. 
We study the moment bounds and the rate of convergence of our scheme  in Lemma \ref{lem:mb:milstein} and Theorem \ref{thm:rate:milstein}, respectively.
 Indeed, the rate of  convergence of our tamed Milstein scheme is shown to be equal to $1$ which is consistent with the classical Milstein scheme for SDEs.  
 For this purpose, we replace Assumption \ref{as:holder} by Assumption \ref{as:lipschitz} and add more assumptions on the regularity of the coefficients as stated below. 
 \begin{assumption} \label{as:lipschitz}
 There exists a constant $L>0$ such that
 \begin{align*}
 \big|b_t(x,\mu)-b_s(x,\mu)\big| + \big|\sigma_t(x,\mu)-\sigma_s(x,\mu)\big| + \big|\sigma_t^0(x,\mu)-\sigma_s^0(x,\mu)\big| \leq L |t-s|,
 \end{align*}
 for all $t, s\in[0,T]$, $x\in\mathbb{R}^d$ and $\mu\in\mathcal{P}_2(\mathbb{R}^d)$. 
 \end{assumption}
 \begin{assumption} \label{as:der:x:poly:lip}
 There exists a constant $L>0$ such that, for every  $j\in\{1,\ldots m\}$ and $j'\in\{1,\ldots m^0\}$ 
  \begin{align*}
 \big|\partial_x  b_t(x,\mu)-\partial_x b_t(\bar{x},\bar{\mu})\big| & \leq L \big\{\big(1+|x|+|\bar{x}|\big)^{\rho/2-1}\big|x-\bar{x}\big|+ \mathcal{W}_2\big(\mu,\bar{\mu}\big) \big\},
 \\
 \big|\partial_x \sigma_t^{(j)}(x,\mu)-\partial_x \sigma_t^{(j)}(\bar{x},\bar{\mu})\big|  & \leq L \big\{\big(1+|x|+|\bar{x}|\big)^{\rho/4-1}\big|x-\bar{x}\big|+ \mathcal{W}_2\big(\mu,\bar{\mu} \big)\big\},
 \\
 \big|\partial_x \sigma_t^{0, (j')}(x,\mu)-\partial_x \sigma_t^{0, (j')}(\bar{x},\bar{\mu})\big| & \leq L \big\{\big(1+|x|+|\bar{x}|\big)^{\rho/4-1}\big|x-\bar{x}\big|+ \mathcal{W}_2\big(\mu,\bar{\mu} \big)\big\},
 \end{align*}
 for all $t\in[0,T]$, $x,\bar{x}\in\mathbb{R}^d$ and $\mu,\bar{\mu}\in\mathcal{P}_2\big(\mathbb{R}^d\big)$. 
 \end{assumption} 
 \begin{assumption} \label{as:der:mea:poly:lip}
 There exists a constant $L>0$ such that, for every $k\in\{1,\ldots,d\}$,  $j\in\{1,\ldots m\}$ and $j'\in\{1,\ldots m^0\}$ ,  
  \begin{align*}
 \big|\partial_\mu  b_t^{(k)}(x,\mu, y)-\partial_\mu b_t^{(k)}(\bar{x},\bar{\mu},\bar{y})\big|^2 & \leq L \big\{\big(1+|x|+|\bar{x}|\big)^{\rho}\big|x-\bar{x}\big|^2+ \mathcal{W}_2^2\big(\mu,\bar{\mu}\big) +|y-\bar{y}|^2 \big\},
 \\
 \big|\partial_\mu \sigma_t^{(k,j)}(x,\mu, y)-\partial_\mu \sigma_t^{(k,j)}(\bar{x},\bar{\mu}, \bar{y})\big|^2  & \leq L \big\{\big(1+|x|+|\bar{x}|\big)^{\rho/2}\big|x-\bar{x}\big|^2+ \mathcal{W}_2^2\big(\mu,\bar{\mu} \big) +\big|y-\bar{y}\big|^2\big\},
 \\
 \big|\partial_\mu \sigma_t^{0, (k,j')}(x,\mu, y)-\partial_\mu \sigma_t^{0, (k,j')}(\bar{x},\bar{\mu}, \bar{y})\big|^2 & \leq L \big\{\big(1+|x|+|\bar{x}|\big)^{\rho/2}\big|x-\bar{x}\big|^2+ \mathcal{W}_2^2\big(\mu,\bar{\mu} \big)+\big|y-\bar{y}\big|^2\big\},
 \end{align*}
 for all $t\in[0,T]$, $x,\bar{x}, y,\bar{y}\in\mathbb{R}^d$ and $\mu,\bar{\mu}\in\mathcal{P}_2\big(\mathbb{R}^d\big)$. 
 \end{assumption} 
 \begin{rem} \label{rem:der:growth:poly}
 As a consequence of Assumptions \ref{as:polynomial:Lipschitz} and Remark \ref{rem:poly:lipschitz}, there is a constant $K:=K(L)>0$ such that, for every $k\in\{1,\ldots,d\}$, $j\in\{1,\ldots m\}$ and $j'\in\{1,\ldots m^0\}$ 
 \begin{align*}
 \big|\partial_x  b_t(x,\mu)\big|  &\leq K \big(1+\big|x\big|\big)^{\rho/2}, 
 \\
  \big|\partial_x  \sigma_t^{(j)}(x,\mu)\big|  + & \big|\partial_x  \sigma_t^{0,{(j')}}(x,\mu)\big|  \leq K  \big(1+\big|x\big|\big)^{\rho/4},
 \\
 \big|\partial_\mu  b_t^{(k)}(x,\mu, y)\big| + \big|\partial_\mu & \sigma_t^{(k,j)}(x,\mu, y)\big|  +  \big|\partial_\mu  \sigma_t^{0, (k,j')}(x,\mu, y)\big|  \leq K,
 \end{align*}
 for all $t\in[0,T]$, $x, y\in\mathbb{R}^d$ and $\mu\in\mathcal{P}_2\big(\mathbb{R}^d\big)$. 
 \end{rem}
As in Section 2, for introducing the tamed  Milstein scheme for the interacting particle system \eqref{eq:interacting} associated with McKean--Vlasov SDE \eqref{eq:MVSDE}, we partition  $[0, T]$   into $n$ sub-intervals of size $h:=T/n$ and define $\kappa_n(t):=\lfloor nt\rfloor/n$ for any $t\in[0,T]$ and $n \in\mathbb{N}$. 
Further, for every $t \in [0,T]$, $x \in \mathbb{R}^d$ and $\mu \in \mathcal{P}_2( \mathbb{R}^d)$, we define,
\begin{align}\label{eq:Taming:milstein}
b_{t}^n \big(x, \mu):= \frac{b_{t} \big(x, \mu)}{1+n^{-1} |x|^{\rho}}, & \quad \sigma_{t}^n \big(x, \mu):= \frac{\sigma_{t}\big(x, \mu)}{1+n^{-1}|x|^{\rho}}, \quad \sigma_{t}^{0, n} \big(x, \mu):= \frac{\sigma_{t}^{0} \big(x, \mu)}{1+n^{-1}|x|^{\rho}},
\end{align}
and propose the  tamed  Milstein scheme given by, 
\begin{align}
X_t^{i,N, n}& =X_0^{i}+ \int_0^t b^n_{\kappa_n(s)} \big(X_{\kappa_n(s)}^{i,N, n}, \mu_{\kappa_n(s)}^{X,N,n}\big) ds + \int_0^t \tilde{\sigma}^n_{\kappa_n(s)} \big(s,X_{\kappa_n(s)}^{i,N,n}, \mu_{\kappa_n(s)}^{X,N,n}\big) dW_s^{i} \notag
\\
 & \qquad + \int_0^t \tilde{\sigma}^{0,n}_{\kappa_n(s)} \big(s,X_{\kappa_n(s)}^{i,N,n}, \mu_{\kappa_n(s)}^{X,N,n}\big) dW_s^{0}   \label{eq:milstein}
\end{align}
almost surely for any $t\in[0,T]$,  $i\in\{1,\ldots,N\}$ and $n,N\in\mathbb{N}$. The coefficients  $\tilde{\sigma}^n$ and $\tilde{\sigma}^{0,n}$ are defined below. For any $s\in[0,T]$,  $i\in\{1,\ldots, N\}$ and $n,N\in\mathbb{N}$, 
\begin{align}
\tilde{\sigma}^n_{\kappa_n(s)} \big(s,X_{\kappa_n(s)}^{i,N,n}, \mu_{\kappa_n(s)}^{X,N,n}\big)&:=\sigma_{\kappa_n(s)}^n \big(X_{\kappa_n(s)}^{i,N,n}, \mu_{\kappa_n(s)}^{X,N,n}\big) + \Gamma_{\kappa_n(s)}^{n, \sigma} \big(s, X_{\kappa_n(s)}^{i,N,n}, \mu_{\kappa_n(s)}^{X,N,n}\big)   \label{eq:tilde:sigma:n}
\end{align}
where $\Gamma^{n, \sigma}$ is further expressed as a sum of four matrices, i.e., 
\begin{align*}
\Gamma_{\kappa_n(s)}^{n, \sigma} \big(s, X_{\kappa_n(s)}^{i,N,n}, \mu_{\kappa_n(s)}^{X,N,n}\big) & := \Lambda_{\kappa_{n}(s)}^{n, \sigma \sigma} \big(s,X_{\kappa_n(s)}^{i,N,n}, \mu_{\kappa_n(s)}^{X,N,n}\big) +\Lambda_{\kappa_{n}(s)}^{n, \sigma \sigma^0} \big(s,X_{\kappa_n(s)}^{i,N,n}, \mu_{\kappa_n(s)}^{X,N,n}\big)   \notag
\\
& \qquad +\bar{\Lambda}^{n,\sigma \sigma}_{\kappa_n(s)} \big( s,X_{\kappa_n(s)}^{i,N,n}, \mu_{\kappa_n(s)}^{X,N,n}\big)  +\bar{\Lambda}^{n,\sigma \sigma^0}_{\kappa_n(s)} \big( s,X_{\kappa_n(s)}^{i,N,n}, \mu_{\kappa_n(s)}^{X,N,n}\big) \notag
\end{align*}
where   $\Lambda^{n, \sigma \sigma}$, $\Lambda^{n, \sigma \sigma^0}$,  $\bar{\Lambda}^{n,\sigma \sigma}$ and $\bar{\Lambda}^{n,\sigma \sigma^0}$  are  $d\times m$-matrices defined as follows,  
\begin{align*}
\Lambda_{\kappa_n(s)}^{n, \sigma \sigma, (u,v) } & \big(s,X_{\kappa_n(s)}^{i,N,n}, \mu_{\kappa_n(s)}^{X,N,n}\big)  := \partial_x \sigma^{(u,v)}_{\kappa_n(s)} \big(X_{\kappa_n(s)}^{i,N,n}, \mu_{\kappa_n(s)}^{X,N,n}\big)   \int_{\kappa_n(s)}^s \sigma^n_{\kappa_n(r)} \big(X_{\kappa_n(r)}^{i,N,n}, \mu_{\kappa_n(r)}^{X,N,n}\big) dW_r^{i}, 
\\
\Lambda_{\kappa_n(s)}^{n, \sigma \sigma^0, (u,v) } & \big(s,X_{\kappa_n(s)}^{i,N,n}, \mu_{\kappa_n(s)}^{X,N,n}\big)  := \partial_x \sigma^{(u,v)}_{\kappa_n(s)} \big(X_{\kappa_n(s)}^{i,N,n}, \mu_{\kappa_n(s)}^{X,N,n}\big)   \int_{\kappa_n(s)}^s \sigma^{0,n}_{\kappa_n(r)} \big(X_{\kappa_n(r)}^{i,N,n}, \mu_{\kappa_n(r)}^{X,N,n}\big) dW_r^{0}, 
\\
 \bar{\Lambda}^{n, \sigma \sigma, (u,v)}_{\kappa_n(s)}  & \big( s,X_{\kappa_n(s)}^{i,N,n}, \mu_{\kappa_n(s)}^{X,N,n}\big)  
\\
& :=  \frac{1}{N}\sum_{j=1}^N  \partial_\mu \sigma^{(u,v)}_{\kappa_n(s)} \big( X_{\kappa_n(s)}^{i,N,n}, \mu_{\kappa_{n}(s)}^{X,N,n},  X_{\kappa_n(s)}^{j,N,n} \big)   \int_{\kappa_n(s)}^s \sigma^n_{\kappa_n(s)} \big(X_{\kappa_n(r)}^{j,N,n}, \mu_{\kappa_n(r)}^{X,N,n}\big) dW_r^{j}, 
\\
\bar{\Lambda}^{n, \sigma \sigma^0, (u,v)}_{\kappa_n(s)} &  \big( s,X_{\kappa_n(s)}^{i,N,n}, \mu_{\kappa_n(s)}^{X,N,n}\big) 
\\
&  :=  \frac{1}{N}\sum_{j=1}^N  \partial_\mu \sigma^{(u,v)}_{\kappa_n(s)} \big( X_{\kappa_n(s)}^{i,N,n}, \mu_{\kappa_{n}(s)}^{X,N,n},  X_{\kappa_n(s)}^{j,N,n} \big)    \int_{\kappa_n(s)}^s \sigma^{0,n}_{\kappa_n(s)} \big(X_{\kappa_n(r)}^{j,N,n}, \mu_{\kappa_n(r)}^{X,N,n}\big) dW_r^{0}. 
\end{align*}
for every $u\in\{1,\ldots,d\}$ and $v\in\{1,\ldots,m\}$. 

Again, for any $s\in[0,T]$,  $i\in\{1,\ldots, N\}$ and $n,N\in\mathbb{N}$,  
\begin{align}
\tilde{\sigma}^{0,n}_{\kappa_n(s)} \big(s,X_{\kappa_n(s)}^{i,N,n}, \mu_{\kappa_n(s)}^{X,N,n}\big)&:=\sigma_{\kappa_n(s)}^{0, n} \big(X_{\kappa_n(s)}^{i,N,n}, \mu_{\kappa_n(s)}^{X,N,n}\big) +\Gamma_{\kappa_{n}(s)}^{n, \sigma^0} \big(s,X_{\kappa_n(s)}^{i,N,n}, \mu_{\kappa_n(s)}^{X,N,n}\big) \label{eq:tilde:sigma:0n}
\end{align}
where $\Gamma^{n, \sigma^0}$  is further expressed as a sum of four matrices, i.e., 
\begin{align*}
\Gamma_{\kappa_{n}(s)}^{n, \sigma^0} \big(s,X_{\kappa_n(s)}^{i,N,n}, \mu_{\kappa_n(s)}^{X,N,n}\big) &:=\Lambda_{\kappa_{n}(s)}^{n, \sigma^0 \sigma} \big(s,X_{\kappa_n(s)}^{i,N,n}, \mu_{\kappa_n(s)}^{X,N,n}\big) +\Lambda_{\kappa_{n}(s)}^{n, \sigma^0 \sigma^0} \big(s,X_{\kappa_n(s)}^{i,N,n}, \mu_{\kappa_n(s)}^{X,N,n}\big)   \notag
\\
& \qquad +\bar{\Lambda}^{n,\sigma^0 \sigma}_{\kappa_n(s)} \big( s,X_{\kappa_n(s)}^{i,N,n}, \mu_{\kappa_n(s)}^{X,N,n}\big)  +\bar{\Lambda}^{n,\sigma^0 \sigma^0}_{\kappa_n(s)} \big( s,X_{\kappa_n(s)}^{i,N,n}, \mu_{\kappa_n(s)}^{X,N,n}\big) \notag
\end{align*}
 where $\Lambda^{n, \sigma^0 \sigma} $, $ \Lambda^{n, \sigma^0 \sigma^0}$, $\bar{\Lambda}^{n,\sigma^0 \sigma}$ and $\bar{\Lambda}^{n,\sigma^0 \sigma^0}$ are $d\times m^0$-matrices whose $(u,v)$-th elements are given in this order by
\begin{align}
\Lambda_{\kappa_n(s)}^{n,  \sigma^0 \sigma , (u,v) } & \big(s,X_{\kappa_n(s)}^{i,N,n}, \mu_{\kappa_n(s)}^{X,N,n}\big)  \notag
\\
&  := \partial_x \sigma^{0,(u,v)}_{\kappa_n(s)} \big(X_{\kappa_n(s)}^{i,N,n}, \mu_{\kappa_n(s)}^{X,N,n}\big) \int_{\kappa_n(s)}^s \sigma^{n}_{\kappa_n(r)} \big(X_{\kappa_n(r)}^{i,N,n}, \mu_{\kappa_n(r)}^{X,N,n}\big) dW_r^{i},  \notag
\\
\Lambda_{\kappa_n(s)}^{n, \sigma^0 \sigma^0, (u,v) } & \big(s,X_{\kappa_n(s)}^{i,N,n}, \mu_{\kappa_n(s)}^{X,N,n}\big)   \notag
\\
& := \partial_x \sigma^{0,(u,v)}_{\kappa_n(s)} \big(X_{\kappa_n(s)}^{i,N,n}, \mu_{\kappa_n(s)}^{X,N,n}\big)  \int_{\kappa_n(s)}^s \sigma^{0,n}_{\kappa_n(r)} \big(X_{\kappa_n(r)}^{i,N,n}, \mu_{\kappa_n(r)}^{X,N,n}\big) dW_r^{0}, \notag
\\
\bar{\Lambda}^{n,  \sigma^0 \sigma, (u,v)}_{\kappa_n(s)} & \big( s,X_{\kappa_n(s)}^{i,N,n}, \mu_{\kappa_n(s)}^{X,N,n}\big) \notag
\\
&   :=  \frac{1}{N}\sum_{j=1}^N  \partial_\mu \sigma^{0, (u,v)}_{\kappa_n(s)} \big( X_{\kappa_n(s)}^{i,N,n}, \mu_{\kappa_{n}(s)}^{X,N,n},  X_{\kappa_n(s)}^{j,N,n} \big)   \int_{\kappa_n(s)}^s \sigma^{n}_{\kappa_n(s)} \big(X_{\kappa_n(r)}^{j,N,n}, \mu_{\kappa_n(r)}^{X,N,n}\big) dW_r^{j}, \notag
\\
\bar{\Lambda}^{n, \sigma^0 \sigma^0, (u,v)}_{\kappa_n(s)} &  \big( s,X_{\kappa_n(s)}^{i,N,n}, \mu_{\kappa_n(s)}^{X,N,n}\big)   \notag
\\
&  :=  \frac{1}{N}\sum_{j=1}^N  \partial_\mu \sigma^{0,(u,v)}_{\kappa_n(s)} \big( X_{\kappa_n(s)}^{i,N,n}, \mu_{\kappa_{n}(s)}^{X,N,n},  X_{\kappa_n(s)}^{j,N,n} \big) \int_{\kappa_n(s)}^s \sigma^{0,n}_{\kappa_n(s)} \big(X_{\kappa_n(r)}^{j,N,n}, \mu_{\kappa_n(r)}^{X,N,n}\big) dW_r^{0}, \notag
\end{align}
for every $u\in\{1,\ldots,d\}$ and $v\in\{1,\ldots,m^0\}$. 
\begin{rem} \label{rem:growth:taming:milstein}
Using equation \eqref{eq:Taming:milstein}, Remarks  \ref{rem:poly:growth} and \ref{rem:der:growth:poly}, one obtains,
\begin{align*}
|b_{t}^n \big(x, \mu)| & \leq  K \min \Big\{n^{1/2}  \left(1+|x| + \mathcal{W}_2(\mu,\delta_0) \right), |b_{t}\big(x, \mu)| \Big\}, \nonumber 
\\
 |\sigma_{t}^n \big(x, \mu)| & \leq  K \min \Big\{ n^{1/4} \left(1+|x| + \mathcal{W}_2(\mu,\delta_0) \right), |\sigma_{t} \big(x, \mu)| \Big\}, \nonumber
  \\
 |\sigma_{t}^{0,n} \big(x, \mu)| & \leq  K \min \Big\{ n^{1/4} \big(1+|x| + \mathcal{W}_2(\mu,\delta_0) \big), |\sigma_{t}^{0} \big(x, \mu)|  \Big\}, \nonumber
 \\
 |\partial_x \sigma^{(u,v)}_t(x,\mu)| |\sigma^n_t(x,\mu)|&  \leq  K    \min \Big\{ n^{1/2} \left(1+|x| + \mathcal{W}_2(\mu,\delta_0) \right), |\partial_x \sigma^{(u,v)}_t(x,\mu)| |\sigma_t(x,\mu)| \Big\}, \nonumber
 \\
 |\partial_x \sigma^{(u,v)}_t(x,\mu)| |\sigma^{0,n}_t(x,\mu)|&  \leq  K    \min \Big\{ n^{1/2} \left(1+|x| + \mathcal{W}_2(\mu,\delta_0) \right), |\partial_x \sigma^{(u,v)}_t(x,\mu)| |\sigma_t^0 (x,\mu)| \Big\}, \nonumber
\\
 |\partial_\mu \sigma^{(u,v)}_t(x,\mu, y)| |\sigma^n_t(x,\mu)|&  \leq  K    \min \Big\{ n^{1/4} \left(1+|x| + \mathcal{W}_2(\mu,\delta_0) \right), |\partial_\mu \sigma^{(u,v)}_t(x,\mu, y)| |\sigma_t(x,\mu)| \Big\}, \nonumber
 \\
 |\partial_\mu \sigma^{(u,v)}_t(x,\mu, y)| |\sigma^{0,n}_t(x,\mu)|&  \leq  K    \min \Big\{ n^{1/4} \left(1+|x| + \mathcal{W}_2(\mu,\delta_0) \right), |\partial_\mu \sigma^{(u,v)}_t(x,\mu,y)| |\sigma_t^0 (x,\mu)| \Big\}, \nonumber
 \\
 |\partial_x \sigma^{0,(u,v)}_t(x,\mu)| |\sigma^n_t(x,\mu)|&  \leq  K    \min \Big\{ n^{1/2} \left(1+|x| + \mathcal{W}_2(\mu,\delta_0) \right), |\partial_x \sigma^{0,(u,v)}_t(x,\mu)| |\sigma_t(x,\mu)| \Big\}, \nonumber
 \\
 |\partial_x \sigma^{0,(u,v)}_t(x,\mu)| |\sigma^{0,n}_t(x,\mu)|&  \leq  K    \min \Big\{ n^{1/2} \left(1+|x| + \mathcal{W}_2(\mu,\delta_0) \right), |\partial_x \sigma^{0,(u,v)}_t(x,\mu)| |\sigma_t^0 (x,\mu)| \Big\}, \nonumber
\\
 |\partial_\mu \sigma^{0,(u,v)}_t(x,\mu,y)| |\sigma^n_t(x,\mu)|&  \leq  K    \min \Big\{ n^{1/4} \left(1+|x| + \mathcal{W}_2(\mu,\delta_0) \right), |\partial_\mu \sigma^{0,(u,v)}_t(x,\mu,y)| |\sigma_t(x,\mu)| \Big\}, \nonumber
 \\
 |\partial_\mu \sigma^{0,(u,v)}_t(x,\mu, y)| |\sigma^{0,n}_t(x,\mu)|&  \leq  K    \min \Big\{ n^{1/4} \left(1+|x| + \mathcal{W}_2(\mu,\delta_0) \right), |\partial_\mu \sigma^{0,(u,v)}_t(x,\mu, y)| |\sigma_t^0 (x,\mu)| \Big\}, \nonumber
 \end{align*}
for all $t \in [0,T]$, $x \in \mathbb{R}^d$ and $\mu \in \mathcal{P}_2( \mathbb{R}^d)$ and for some constant $K>0$ independent of $n$.
\end{rem}
\begin{rem}
In view of Remark \ref{rem:no:taming}, no taming is needed in equation \eqref{eq:Taming:milstein} when $\rho=0$.  
\end{rem}
Before presenting the result on moment boundedness of the Milstein scheme  (\ref{eq:milstein}), we  establish several lemmas as given below. 

\begin{lem} \label{lem:gamma:mb}
Let Assumptions \ref{as:x0}, \ref{as:coercivity}, \ref{as:polynomial:Lipschitz}, \ref{as:der:x:poly:lip} and \ref{as:der:mea:poly:lip} be satisfied. Then, for each $i\in\{1,\ldots,N\}$,
\begin{align*}
E\big| \Gamma_{\kappa_n(s)}^{n, \sigma} \big(s, X_{\kappa_n(s)}^{i,N,n}, \mu_{\kappa_n(s)}^{X,N,n}\big) \big|^{p_0} & \leq K E  \big(1+\big| X_{\kappa_n(s)}^{i,N,n}\big|^2\big)^{p_0/2} +K  E\mathcal{W}_2^{p_0} \big( \mu_{\kappa_n(s)}^{X,N,n}, \delta_0 \big), 
\\
E\big| \Gamma_{\kappa_n(s)}^{n, \sigma^0} \big(s, X_{\kappa_n(s)}^{i,N,n}, \mu_{\kappa_n(s)}^{X,N,n}\big) \big|^{p_0} &  \leq K E  \big(1+\big| X_{\kappa_n(s)}^{i,N,n}\big|^2\big)^{p_0/2} +K  E\mathcal{W}_2^{p_0} \big( \mu_{\kappa_n(s)}^{X,N,n}, \delta_0 \big),
\end{align*}
for all $s\in[0,T]$ and $n,N\in\mathbb{N}$ where the constant $K>0$ does not depend on $n$ and $N$. 
\end{lem}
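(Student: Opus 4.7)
The plan is to bound each of the four matrix-valued summands comprising $\Gamma^{n,\sigma}$ (and analogously $\Gamma^{n,\sigma^0}$) and then combine via $(a_1+\cdots+a_4)^{p_0} \leq K\sum_j a_j^{p_0}$. A crucial simplification is that for every $r\in[\kappa_n(s),s]$ one has $\kappa_n(r)=\kappa_n(s)$, so every integrand appearing in the definitions of $\Lambda^{n,\cdot\cdot}$ and $\bar{\Lambda}^{n,\cdot\cdot}$ is $\mathcal{F}_{\kappa_n(s)}$-measurable and constant on the interval of integration. Consequently the Burkholder--Davis--Gundy inequality, applied conditionally on $\mathcal{F}_{\kappa_n(s)}$, reduces each stochastic integral to the product of a coefficient and a factor of order $n^{-p_0/2}$ coming from the Brownian increment of length $1/n$.

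For the $(u,v)$-entry of $\Lambda^{n,\sigma\sigma}$, Cauchy--Schwarz on the matrix--vector product together with BDG yields
\[
E\big|\Lambda_{\kappa_n(s)}^{n,\sigma\sigma,(u,v)}\big|^{p_0} \leq K n^{-p_0/2}\, E\Big[\big|\partial_x \sigma^{(u,v)}_{\kappa_n(s)}(X^{i,N,n}_{\kappa_n(s)}, \mu^{X,N,n}_{\kappa_n(s)})\big|^{p_0}\, \big|\sigma^n_{\kappa_n(s)}(X^{i,N,n}_{\kappa_n(s)}, \mu^{X,N,n}_{\kappa_n(s)})\big|^{p_0}\Big].
\]
The combined taming bound $|\partial_x \sigma^{(u,v)}||\sigma^n| \leq K n^{1/2}(1+|x|+\mathcal{W}_2(\mu,\delta_0))$ from Remark~\ref{rem:growth:taming:milstein} absorbs the $n^{p_0/2}$ and gives $E|\Lambda^{n,\sigma\sigma,(u,v)}|^{p_0} \leq K E(1+|X^{i,N,n}_{\kappa_n(s)}|+\mathcal{W}_2(\mu^{X,N,n}_{\kappa_n(s)},\delta_0))^{p_0}$, which is controlled by the claimed right-hand side via $(1+|x|)^{p_0}\leq K(1+|x|^2)^{p_0/2}$ and convexity. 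The mixed term $\Lambda^{n,\sigma\sigma^0}$ is bounded identically with the corresponding combined estimate $|\partial_x \sigma^{(u,v)}||\sigma^{0,n}|\leq K n^{1/2}(1+|x|+\mathcal{W}_2)$.

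For $\bar{\Lambda}^{n,\sigma\sigma}$ (and $\bar{\Lambda}^{n,\sigma\sigma^0}$), each summand in the empirical average is driven by a distinct Brownian motion $W^{j}$. Applying Jensen's inequality to the empirical average, BDG to each stochastic integral, the uniform bound $|\partial_\mu \sigma^{(u,v)}|\leq K$ from Remark~\ref{rem:der:growth:poly}, and the taming bound $|\sigma^n|\leq K n^{1/4}(1+|x|+\mathcal{W}_2)$ produces
\[
E\big|\bar{\Lambda}_{\kappa_n(s)}^{n,\sigma\sigma,(u,v)}\big|^{p_0} \leq K n^{-p_0/4} \cdot \frac{1}{N}\sum_{j=1}^{N} E\big(1+|X^{j,N,n}_{\kappa_n(s)}|+\mathcal{W}_2(\mu^{X,N,n}_{\kappa_n(s)},\delta_0)\big)^{p_0},
\]
and symmetry/exchangeability of the particles (iid initial data and symmetric coefficients) lets us replace each $E|X^{j,N,n}_{\kappa_n(s)}|^{p_0}$ by $E|X^{i,N,n}_{\kappa_n(s)}|^{p_0}$. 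The $n^{-p_0/4}$ gain is harmless for the stated bound. Summing the four contributions concludes the estimate for $\Gamma^{n,\sigma}$, and the argument for $\Gamma^{n,\sigma^0}$ proceeds mutatis mutandis with $\sigma$ replaced by $\sigma^0$ in the outer derivative.

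The main obstacle is merely the book-keeping of the $n^{\pm\alpha}$ factors: the combined taming bounds of Remark~\ref{rem:growth:taming:milstein} are designed precisely so that the $n^{1/2}$ (respectively $n^{1/4}$) growth of $|\partial_x\sigma||\sigma^n|$ (respectively $|\partial_\mu\sigma||\sigma^n|$) cancels against the $n^{-1/2}$ scaling of a single Brownian increment in the BDG estimate, leaving no net power of $n$ in the bound for the $\Lambda$-terms and a small gain for the $\bar{\Lambda}$-terms.
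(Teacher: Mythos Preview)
Your proposal is correct and follows essentially the same route as the paper: bound each of the four summands $\Lambda^{n,\sigma\sigma}$, $\Lambda^{n,\sigma\sigma^0}$, $\bar{\Lambda}^{n,\sigma\sigma}$, $\bar{\Lambda}^{n,\sigma\sigma^0}$ via BDG and the combined taming estimates of Remark~\ref{rem:growth:taming:milstein}, then add. The only cosmetic differences are that you make explicit the observation $\kappa_n(r)=\kappa_n(s)$ on $[\kappa_n(s),s]$ and invoke particle exchangeability to pass from $E|X^{j,N,n}_{\kappa_n(s)}|^{p_0}$ to $E|X^{i,N,n}_{\kappa_n(s)}|^{p_0}$, both of which the paper uses silently.
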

\begin{proof} 
Using  Remark \ref{rem:growth:taming:milstein}, one obtains
\begin{align*}
E\big|\Lambda_{\kappa_n(s)}^{n, \sigma \sigma, (u,v) }  \big(s,X_{\kappa_n(s)}^{i,N,n}, \mu_{\kappa_n(s)}^{X,N,n}\big)\big|^{p_0} & = E\Big|\partial_x \sigma^{(u,v)}_{\kappa_n(s)} \big(X_{\kappa_n(s)}^{i,N,n}, \mu_{\kappa_n(s)}^{X,N,n}\big)   \int_{\kappa_n(s)}^s \sigma^n_{\kappa_n(r)} \big(X_{\kappa_n(r)}^{i,N,n}, \mu_{\kappa_n(r)}^{X,N,n}\big) dW_r^{i}\Big|^{p_0}  
\\
& \leq K n^{-p_0/2} E \big |\partial_x \sigma^{(u,v)}_{\kappa_n(s)} \big(X_{\kappa_n(s)}^{i,N,n}, \mu_{\kappa_n(s)}^{X,N,n}\big)\sigma^n_{\kappa_n(s)} \big(X_{\kappa_n(s)}^{i,N,n}, \mu_{\kappa_n(s)}^{X,N,n}\big) \big|^{p_0}  
\\
& \leq K E \Big(1+ \big| X_{\kappa_n(s)}^{i,N,n}\big|+\mathcal{W}_2\big( \mu_{\kappa_n(s)}^{X,N,n}, \delta_0 \big) \Big)^{p_0}
\end{align*}
for any $s\in[0,T]$ and $n, N\in\mathbb{R}^d$.  Similarly, one  obtains
\begin{align*}
E\big| & \Lambda_{\kappa_n(s)}^{n, \sigma \sigma^0, (u,v) }  \big(s,X_{\kappa_n(s)}^{i,N,n}, \mu_{\kappa_n(s)}^{X,N,n}\big)\big|^{p_0} \leq K E \Big(1+ \big| X_{\kappa_n(s)}^{i,N,n}\big|+\mathcal{W}_2\big( \mu_{\kappa_n(s)}^{X,N,n}, \delta_0 \big) \Big)^{p_0}
\end{align*}
for any $s\in[0,T]$ and $n, N\in\mathbb{R}^d$. 

Again, the application of Remark \ref{rem:growth:taming:milstein} yields
\begin{align*}
E\big|& \bar{\Lambda}^{n, \sigma \sigma, (u,v)}_{\kappa_n(s)}  \big( s,X_{\kappa_n(s)}^{i,N,n}, \mu_{\kappa_n(s)}^{X,N,n}\big)\big|^{p_0}   \notag  
\\
&  = E\Big|\frac{1}{N}\sum_{j=1}^N  \partial_\mu \sigma^{(u,v)}_{\kappa_n(s)} \big( X_{\kappa_n(s)}^{i,N,n}, \mu_{\kappa_{n}(s)}^{X,N,n},  X_{\kappa_n(s)}^{j,N,n} \big) \int_{\kappa_n(s)}^s \sigma^n_{\kappa_n(r)} \big(X_{\kappa_n(r)}^{j,N,n}, \mu_{\kappa_n(r)}^{X,N,n}\big) dW_r^{j}\Big|^{p_0}  
\\
&  \leq K  \frac{1}{N}\sum_{j=1}^N E \Big|  \partial_\mu \sigma^{(u,v)}_{\kappa_n(s)} \big( X_{\kappa_n(s)}^{i,N,n}, \mu_{\kappa_{n}(s)}^{X,N,n},  X_{\kappa_n(s)}^{j,N,n} \big) \int_{\kappa_n(s)}^s \sigma^n_{\kappa_n(r)} \big(X_{\kappa_n(r)}^{j,N,n}, \mu_{\kappa_n(r)}^{X,N,n}\big) dW_r^{j}\Big|^{p_0}  
\\
&  \leq K n^{-p_0/2} \frac{1}{N}\sum_{j=1}^N E \Big\{  \big| \partial_\mu \sigma^{(u,v)}_{\kappa_n(s)} \big( X_{\kappa_n(s)}^{i,N,n}, \mu_{\kappa_{n}(s)}^{X,N,n},  X_{\kappa_n(s)}^{j,N,n} \big)\big|\big| \sigma^n_{\kappa_n(s)} \big(X_{\kappa_n(s)}^{j,N,n}, \mu_{\kappa_n(s)}^{X,N,n}\big)\big| \Big\}^{p_0} 
\\
& \leq K n^{-p_0/4}   K E \Big(1+ \big| X_{\kappa_n(s)}^{i,N,n}\big|+\mathcal{W}_2\big( \mu_{\kappa_n(s)}^{X,N,n}, \delta_0 \big) \Big)^{p_0}
\end{align*}
for any $s\in[0,T]$ and $n, N\in\mathbb{R}^d$.  Similarly, 
\begin{align*}
E \big| \bar{\Lambda}^{n, \sigma \sigma^0, (u,v)}_{\kappa_n(s)}   \big( s,X_{\kappa_n(s)}^{i,N,n}, \mu_{\kappa_n(s)}^{X,N,n}\big)\big|^{p_0}   \leq K n^{-p_0/4}   K E \Big(1+ \big| X_{\kappa_n(s)}^{i,N,n}\big|+\mathcal{W}_2\big( \mu_{\kappa_n(s)}^{X,N,n}, \delta_0 \big) \Big)^{p_0}  
\end{align*}
for any $s\in[0,T]$ and $n, N\in\mathbb{R}^d$. Adding the above inequalities leads to 
\begin{align*}
E \big| & \Gamma_{\kappa_n(s)}^{n, \sigma}  \big(s, X_{\kappa_n(s)}^{i,N,n}, \mu_{\kappa_n(s)}^{X,N,n}\big)\big|^{p_0}  \leq  K E\big|\Lambda_{\kappa_{n}(s)}^{n, \sigma \sigma} \big(s,X_{\kappa_n(s)}^{i,N,n}, \mu_{\kappa_n(s)}^{X,N,n}\big)\big|^{p_0} 
\\
&  +K E\big|\Lambda_{\kappa_{n}(s)}^{n, \sigma \sigma^0} \big(s,X_{\kappa_n(s)}^{i,N,n}, \mu_{\kappa_n(s)}^{X,N,n}\big) \big|^{p_0}  +K E \big|\bar{\Lambda}^{n,\sigma \sigma}_{\kappa_n(s)} \big( s,X_{\kappa_n(s)}^{i,N,n}, \mu_{\kappa_n(s)}^{X,N,n}\big) \big|^{p_0} 
\\
& \qquad + K E\big| \bar{\Lambda}^{n,\sigma \sigma^0}_{\kappa_n(s)} \big( s,X_{\kappa_n(s)}^{i,N,n}, \mu_{\kappa_n(s)}^{X,N,n}\big) \big|^{p_0} \leq     K E \Big(1+ \big| X_{\kappa_n(s)}^{i,N,n}\big|+\mathcal{W}_2\big( \mu_{\kappa_n(s)}^{X,N,n}, \delta_0 \big) \Big)^{p_0}  
\end{align*}
for any $s\in[0,T]$ and $n, N\in\mathbb{R}^d$. This completes the proof of the first inequality. The second inequality can be proved similarly.  
\end{proof}
As a consequence of the above lemma and Remark \ref{rem:growth:taming:milstein}, one obtains the following corollary. 
\begin{cor} \label{cor:sigma:mb}
Let Assumptions \ref{as:x0}, \ref{as:coercivity}, \ref{as:polynomial:Lipschitz}, \ref{as:der:x:poly:lip} and \ref{as:der:mea:poly:lip} be satisfied. Then, for each $i\in\{1,\ldots,N\}$,
\begin{align*}
E\big| \tilde{\sigma}^n_{\kappa_n(s)} \big(s, X_{\kappa_n(s)}^{i,N,n}, \mu_{\kappa_n(s)}^{X,N,n}\big) \big|^{p_0} & \leq K n^{p_0/4} \Big\{ E  \big(1+\big| X_{\kappa_n(s)}^{i,N,n}\big|^2\big)^{p_0/2} +K  E\mathcal{W}_2^{p_0} \big( \mu_{\kappa_n(s)}^{X,N,n}, \delta_0 \big)\Big\} , 
\\
E\big|  \tilde{\sigma}^{0,n}_{\kappa_n(s)} \big(s, X_{\kappa_n(s)}^{i,N,n}, \mu_{\kappa_n(s)}^{X,N,n}\big) \big|^{p_0} &  \leq K n^{p_0/4}\Big\{ E  \big(1+\big| X_{\kappa_n(s)}^{i,N,n}\big|^2\big)^{p_0/2} +K  E\mathcal{W}_2^{p_0} \big( \mu_{\kappa_n(s)}^{X,N,n}, \delta_0 \big)\Big\},
\end{align*}
for all $s\in[0,T]$ and $n,N\in\mathbb{N}$ where the constant $K>0$ does not depend on $n$ and $N$. 
\end{cor}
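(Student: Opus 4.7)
The proof is a direct consequence of the preceding Lemma~\ref{lem:gamma:mb} together with the taming bounds in Remark \ref{rem:growth:taming:milstein}. The plan is the following.

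First, I would decompose $\tilde{\sigma}^n$ and $\tilde{\sigma}^{0,n}$ using their defining identities \eqref{eq:tilde:sigma:n} and \eqref{eq:tilde:sigma:0n}, so that
\[
\tilde{\sigma}^n_{\kappa_n(s)}\bigl(s,X_{\kappa_n(s)}^{i,N,n}, \mu_{\kappa_n(s)}^{X,N,n}\bigr)= \sigma_{\kappa_n(s)}^n\bigl(X_{\kappa_n(s)}^{i,N,n}, \mu_{\kappa_n(s)}^{X,N,n}\bigr)+\Gamma_{\kappa_n(s)}^{n,\sigma}\bigl(s, X_{\kappa_n(s)}^{i,N,n}, \mu_{\kappa_n(s)}^{X,N,n}\bigr),
\]
and analogously for $\tilde{\sigma}^{0,n}$. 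Applying the elementary inequality $|a+b|^{p_0}\leq 2^{p_0-1}(|a|^{p_0}+|b|^{p_0})$, it suffices to bound the two terms on the right separately in the $L^{p_0}$-norm.

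Second, for the $\Gamma^{n,\sigma}$ and $\Gamma^{n,\sigma^0}$ contributions, I would invoke Lemma~\ref{lem:gamma:mb} directly, yielding
\[
E\bigl|\Gamma_{\kappa_n(s)}^{n,\sigma}(\cdot)\bigr|^{p_0}+E\bigl|\Gamma_{\kappa_n(s)}^{n,\sigma^0}(\cdot)\bigr|^{p_0} \leq K\,E\bigl(1+|X_{\kappa_n(s)}^{i,N,n}|^2\bigr)^{p_0/2}+K\,E\mathcal{W}_2^{p_0}\bigl(\mu_{\kappa_n(s)}^{X,N,n},\delta_0\bigr).
\]
For the $\sigma^n_{\kappa_n(s)}$ and $\sigma^{0,n}_{\kappa_n(s)}$ contributions, I would use the first three estimates of Remark~\ref{rem:growth:taming:milstein}, namely $|\sigma^n_t(x,\mu)|\vee|\sigma^{0,n}_t(x,\mu)|\leq K n^{1/4}(1+|x|+\mathcal{W}_2(\mu,\delta_0))$. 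Raising to the $p_0$-th power and taking expectation produces a bound of order $K n^{p_0/4}\bigl\{E(1+|X_{\kappa_n(s)}^{i,N,n}|^2)^{p_0/2}+E\mathcal{W}_2^{p_0}(\mu_{\kappa_n(s)}^{X,N,n},\delta_0)\bigr\}$, using $(1+|x|)^{p_0}\leq K(1+|x|^2)^{p_0/2}$.

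Adding both bounds and observing that $n^{p_0/4}\geq 1$ so that the contribution from $\Gamma$ is absorbed into the prefactor, the claimed inequality for $\tilde{\sigma}^n$ follows, and the bound for $\tilde{\sigma}^{0,n}$ follows by identical reasoning. There is no genuine obstacle here: the entire argument is mechanical given Lemma~\ref{lem:gamma:mb} and Remark~\ref{rem:growth:taming:milstein}, and the only point requiring slight care is tracking that the $n^{p_0/4}$ factor from the taming estimate on $\sigma^n$ dominates the $O(1)$ prefactor from the $\Gamma$ bound, which legitimises absorbing the two terms into a single right-hand side.
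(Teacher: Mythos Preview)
Your proposal is correct and follows exactly the approach the paper indicates: the corollary is stated as a direct consequence of Lemma~\ref{lem:gamma:mb} and Remark~\ref{rem:growth:taming:milstein}, and your decomposition $\tilde{\sigma}^n=\sigma^n+\Gamma^{n,\sigma}$ together with the triangle inequality is precisely how one unpacks that sentence.
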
 
\begin{lem}\label{Lemma:onestepMilstein}
Let Assumptions \ref{as:x0}, \ref{as:coercivity}, \ref{as:polynomial:Lipschitz}, \ref{as:der:x:poly:lip} and  \ref{as:der:mea:poly:lip} be satisfied. Then, 
\begin{align*}
E |X_{s}^{i,N, n} - X_{\kappa_n(s)}^{i,N, n}|^{p_0} &\leq K n^{-p_0/4} \Big\{ E  \big(1+\big| X_{\kappa_n(s)}^{i,N,n}\big|^2\big)^{p_0/2} +K  E\mathcal{W}_2^{p_0} \big( \mu_{\kappa_n(s)}^{X,N,n}, \delta_0 \big)\Big\},
\end{align*}
for any $t\in[0,T]$, $i\in\{1,\ldots,N\}$ and $n,N\in\mathbb{N}$ where the  constant $K>0$ is independent of $N$ and $n$.
\end{lem}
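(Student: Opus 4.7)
The plan is to mimic the structure of the one-step bound established for the Euler scheme in Lemma \ref{Lemma:onestep}, but with the additional Milstein correction terms inside $\tilde\sigma^n$ and $\tilde\sigma^{0,n}$ handled through Corollary \ref{cor:sigma:mb}. First, I would use the Milstein scheme \eqref{eq:milstein}, together with the fact that $\kappa_n(u)=\kappa_n(s)$ for $u\in[\kappa_n(s),s]$, to write
\begin{align*}
X_s^{i,N,n}-X_{\kappa_n(s)}^{i,N,n}
&= \int_{\kappa_n(s)}^s b_{\kappa_n(s)}^n\big(X_{\kappa_n(s)}^{i,N,n},\mu_{\kappa_n(s)}^{X,N,n}\big)\,du \\
&\qquad + \int_{\kappa_n(s)}^s \tilde\sigma^n_{\kappa_n(s)}\big(u,X_{\kappa_n(s)}^{i,N,n},\mu_{\kappa_n(s)}^{X,N,n}\big)\,dW_u^i \\
&\qquad + \int_{\kappa_n(s)}^s \tilde\sigma^{0,n}_{\kappa_n(s)}\big(u,X_{\kappa_n(s)}^{i,N,n},\mu_{\kappa_n(s)}^{X,N,n}\big)\,dW_u^0,
\end{align*}
and apply the elementary inequality $|a+b+c|^{p_0}\le K(|a|^{p_0}+|b|^{p_0}+|c|^{p_0})$.

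Next, for the drift integral, I would use H\"older's inequality together with $s-\kappa_n(s)\le n^{-1}$ and the estimate $|b_t^n(x,\mu)|\le K n^{1/2}(1+|x|+\mathcal{W}_2(\mu,\delta_0))$ from Remark \ref{rem:growth:taming:milstein}; this yields a bound of order $n^{-p_0/2}\,E(1+|X_{\kappa_n(s)}^{i,N,n}|+\mathcal{W}_2(\mu_{\kappa_n(s)}^{X,N,n},\delta_0))^{p_0}$, which is of lower order than the claimed rate.

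For the two stochastic integrals, I would apply the Burkholder--Davis--Gundy inequality followed by H\"older's inequality to obtain
\begin{align*}
E\Bigl|\int_{\kappa_n(s)}^s \tilde\sigma^n_{\kappa_n(s)}(u,\cdot)\,dW_u^i\Bigr|^{p_0}
\le K\,n^{-p_0/2+1}\int_{\kappa_n(s)}^s E\bigl|\tilde\sigma^n_{\kappa_n(s)}(u,\cdot)\bigr|^{p_0}\,du,
\end{align*}
and analogously for $W^0$. Invoking Corollary \ref{cor:sigma:mb}, the integrand is bounded uniformly in $u\in[\kappa_n(s),s]$ by $K\,n^{p_0/4}\{E(1+|X_{\kappa_n(s)}^{i,N,n}|^2)^{p_0/2}+E\,\mathcal{W}_2^{p_0}(\mu_{\kappa_n(s)}^{X,N,n},\delta_0)\}$; combined with the remaining factor $n^{-p_0/2+1}\cdot n^{-1}=n^{-p_0/2}$, this produces exactly the target rate $n^{-p_0/4}$. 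Summing the three contributions completes the proof.

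The main subtlety, and the reason the bound is $n^{-p_0/4}$ rather than $n^{-p_0/2}$ as one might na\"ively hope, is the growth of $\tilde\sigma^n$: the Milstein correction $\Gamma^{n,\sigma}$ contains stochastic integrals on $[\kappa_n(s),s]$ of products of derivatives of $\sigma$ with tamed coefficients, and Corollary \ref{cor:sigma:mb} shows these blow up like $n^{1/4}$. Once one accepts this and uses the Corollary as a black box, the rest is a routine combination of H\"older's and BDG inequalities. The hard part is therefore not in this lemma itself but was absorbed into the preceding Lemma \ref{lem:gamma:mb} and Corollary \ref{cor:sigma:mb}, which isolate the $n^{p_0/4}$ growth of the Milstein correction.
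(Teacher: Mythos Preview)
Your proposal is correct and follows essentially the same route as the paper: decompose the increment via \eqref{eq:milstein}, apply H\"older to the drift integral and Burkholder--Davis--Gundy to the two stochastic integrals, and then invoke Remark \ref{rem:growth:taming:milstein} and Corollary \ref{cor:sigma:mb} to bound the integrands. Your arithmetic and your explanation of why the rate degrades to $n^{-p_0/4}$ are both accurate.
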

\begin{proof}
From (\ref{eq:milstein}), 
\begin{align*}
|X_{s}^{i,N, n} - X_{\kappa_n(s)}^{i,N, n}|^{p_0} &\leq  K \Big| \int_{\kappa_n(s)}^s b_{\kappa_n(r)}^n \big(X_{\kappa_n(r)}^{i,N, n},  \mu_{\kappa_n(r)}^{X,N,n}\big) dr \Big|^{p_0}  +  K \Big| \int_{\kappa_n(s)}^s \tilde{\sigma}_{\kappa_n(r)}^n\big(r, X_{\kappa_n(r)}^{i,N,n}, \mu_{\kappa_n(r)}^{X,N,n}\big) dW_r^{i} \Big|^{p_0} \nonumber
\\
& \qquad +  K \Big| \int_{\kappa_n(s)}^s \tilde{\sigma}^{0,n}_{\kappa_n(r)}\big(r, X_{\kappa_n(r)}^{i,N,n}, \mu_{\kappa_n(r)}^{X,N,n}\big) dW_r^{0} \Big|^{p_0}.
\end{align*}
which on applying H\"{o}lder's inequality and Burkholder--Davis--Gundy inequality yields
\begin{align*}
E |X_{s}^{i,N, n}  - X_{\kappa_n(s)}^{i,N, n}|^{p_0}   \leq & K n^{-p_0+1} E \int_{\kappa_n(s)}^s \left|b_{\kappa_n(r)}^n \big(X_{\kappa_n(r)}^{i,N, n},  \mu_{\kappa_n(r)}^{X,N,n}\big)\right|^{p_0} dr
  \\
& + K n^{-p_0/2+1} E  \int_{\kappa_n(s)}^s \big | \tilde{\sigma}_{\kappa_n(r)}^{n}\big(r, X_{\kappa_n(r)}^{i,N,n}, \mu_{\kappa_n(r)}^{X,N,n}\big) \big |^{p_0}  dr  
\\
& + K n^{-p_0/2+1}  E  \int_{\kappa_n(s)}^s \big | \tilde{\sigma}_{\kappa_n(r)}^{0,n} \big(r, X_{\kappa_n(r)}^{i,N,n}, \mu_{\kappa_n(r)}^{X,N,n}\big) \big  |^{p_0} dr
\end{align*}
and then the result follows on using  Corollary \ref{cor:sigma:mb} and Remark \ref{rem:growth:taming:milstein}. 
\end{proof}

\begin{lem}[\textbf{Moment Bounds}] \label{lem:mb:milstein}
Let Assumptions \ref{as:x0}, \ref{as:coercivity}, \ref{as:polynomial:Lipschitz}, \ref{as:der:x:poly:lip} and  \ref{as:der:mea:poly:lip} be satisfied. Then, 
\begin{equation*}
\sup_{i \in \lbrace 1, \ldots, N \rbrace } \sup_{t \in [ 0,T ]} E \big(1 + |X_t^{i,N, n}|^2 \big)^{p_0/2} \leq K, 
\end{equation*}
for any $n, N\in\mathbb{N}$ where $K>0$ is a constant independent of $n$ and $N$. Moreover, 
\begin{equation*}
\sup_{i \in \lbrace 1, \ldots, N \rbrace } E \sup_{t \in [ 0,T ]} \big(1 + |X_t^{i,N, n}|^2 \big)^{q/2} \leq K,
\end{equation*}
for any $q<p_0$. 
\end{lem}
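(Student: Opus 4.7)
The plan is to mirror the proof of the tamed Euler moment bound (Lemma~\ref{lem:mb:euler}) by applying It\^o's formula to $(1+|X_t^{i,N,n}|^2)^{p_0/2}$, taking expectation, and combining the drift contribution with the Brownian quadratic variation to form an expression of the type $2xb^n + (p_0-1)|\tilde\sigma^n|^2 + (p_0-1)|\tilde\sigma^{0,n}|^2$. The tamed coefficients $b^n$, $\sigma^n$, $\sigma^{0,n}$ inherit the coercivity Assumption~\ref{as:coercivity} because the taming denominator is $\geq 1$, so the principal part $2xb^n + (p_0-1)(|\sigma^n|^2 + |\sigma^{0,n}|^2)$ at the grid-point $\kappa_n(s)$ is controlled exactly as in the Euler case. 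The factor $X_s^{i,N,n}$ multiplying $b^n_{\kappa_n(s)}$ inside the drift integral must first be re-centred to $X_{\kappa_n(s)}^{i,N,n}$ via the scheme~\eqref{eq:milstein}, and the resulting correction terms (the Milstein analogues of $F_1$ and $F_2$ from the Euler proof) are bounded using Lemma~\ref{Lemma:onestepMilstein}, the Burkholder--Davis--Gundy inequality, Young's inequality, and the growth estimates of Remark~\ref{rem:growth:taming:milstein}.

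The new feature relative to the Euler case is the presence of the Milstein corrections $\tilde\sigma^n = \sigma^n + \Gamma^{n,\sigma}$ and $\tilde\sigma^{0,n} = \sigma^{0,n} + \Gamma^{n,\sigma^0}$. Expanding
\[
|\tilde\sigma^n|^2 = |\sigma^n|^2 + 2\,\sigma^n\cdot\Gamma^{n,\sigma} + |\Gamma^{n,\sigma}|^2,
\qquad
|\tilde\sigma^{0,n}|^2 = |\sigma^{0,n}|^2 + 2\,\sigma^{0,n}\cdot\Gamma^{n,\sigma^0} + |\Gamma^{n,\sigma^0}|^2,
\]
the principal pieces $|\sigma^n|^2$, $|\sigma^{0,n}|^2$ enter the coercivity estimate, while the cross terms and the squared remainders are absorbed by Young's inequality together with the moment bounds for $\Gamma^{n,\sigma}$ and $\Gamma^{n,\sigma^0}$ provided by Lemma~\ref{lem:gamma:mb}. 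Using the identity $\mathcal{W}_2^2(\mu^{X,N,n}_{\kappa_n(s)},\delta_0) = \frac{1}{N}\sum_{j=1}^N |X_{\kappa_n(s)}^{j,N,n}|^2$ (cf.~\eqref{eq:w2:mb}) lets one pass the empirical-measure contribution back to the supremum in $i$, arriving at
\[
\sup_{i \in \{1,\ldots,N\}}\sup_{0\le r\le t} E\big(1+|X_r^{i,N,n}|^2\big)^{p_0/2} \le K + K\int_0^t \sup_{i}\sup_{0\le r\le s} E\big(1+|X_r^{i,N,n}|^2\big)^{p_0/2}\,ds.
\]
Gronwall's inequality then delivers the first assertion, and the second assertion (the bound on $E\sup_t$) follows by applying Lemma~\ref{lem:gk} with $f_t = (1+|X_t^{i,N,n}|^2)^{q/2}$ and $g_t$ the corresponding right-hand side raised to the power $p_0/q$, $q<p_0$.

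The main obstacle is the careful book-keeping of the Milstein correction terms when combined with the recentring step: the cross products $\sigma^n\cdot\Gamma^{n,\sigma}$, $\sigma^{0,n}\cdot\Gamma^{n,\sigma^0}$, and the stochastic integrals over $[\kappa_n(s),s]$ appearing in the expansion of $X_s^{i,N,n}$ involve products such as $\partial_x\sigma\cdot\sigma^n$ and $\partial_\mu\sigma\cdot\sigma^n$ whose integrands grow super-linearly in the state. These are precisely the quantities whose uniform-in-$n$ and -$N$ moment control is secured by Lemma~\ref{lem:gamma:mb}, Corollary~\ref{cor:sigma:mb}, and Remark~\ref{rem:growth:taming:milstein}; once those are invoked, the remainder of the argument is a routine adaptation of the Euler-case proof.
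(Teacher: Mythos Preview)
Your outline matches the paper's overall strategy, but there is a genuine gap in how you treat the cross terms $2\,\sigma^n\!\cdot\!\Gamma^{n,\sigma}$ and $2\,\sigma^{0,n}\!\cdot\!\Gamma^{n,\sigma^0}$. You propose to absorb them ``by Young's inequality together with Lemma~\ref{lem:gamma:mb}'', but this does not close. The bound in Lemma~\ref{lem:gamma:mb} controls $E|\Gamma^{n,\sigma}|^{p_0}$ by the running moment \emph{without any negative power of $n$}, while by Remark~\ref{rem:growth:taming:milstein} one only has $|\sigma^n_{\kappa_n(s)}|\le K n^{1/4}\big(1+|X_{\kappa_n(s)}^{i,N,n}|+\mathcal W_2(\mu_{\kappa_n(s)}^{X,N,n},\delta_0)\big)$. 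Whichever way you split with Young (e.g.\ $2\sigma^n\!\cdot\!\Gamma\le \epsilon|\sigma^n|^2+\epsilon^{-1}|\Gamma|^2$, or pulling the prefactor $(1+|X_s|^2)^{p_0/2-1}$ off first), one of the resulting pieces carries a positive power of $n$ multiplying the very quantity $\sup_{i}\sup_{r\le s}E(1+|X_r^{i,N,n}|^2)^{p_0/2}$ you are trying to bound, and Gronwall cannot be applied.

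The paper handles these cross terms (its $\Pi_6,\Pi_7$) by first replacing the prefactor $(1+|X_s^{i,N,n}|^2)^{p_0/2-1}$ by its value at the grid point $\kappa_n(s)$. Since $\Gamma^{n,\sigma}_{\kappa_n(s)}(s,\cdot)$ is a stochastic integral over $[\kappa_n(s),s]$ and both $\sigma^n_{\kappa_n(s)}$ and $(1+|X_{\kappa_n(s)}^{i,N,n}|^2)^{p_0/2-1}$ are $\mathcal F_{\kappa_n(s)}$-measurable, the recentred term has zero expectation. What remains is the difference $(1+|X_s|^2)^{p_0/2-1}-(1+|X_{\kappa_n(s)}|^2)^{p_0/2-1}$, which via \eqref{eq:mvt:mb} produces a factor $|X_s^{i,N,n}-X_{\kappa_n(s)}^{i,N,n}|$; Lemma~\ref{Lemma:onestepMilstein} then supplies exactly the $n^{-p_0/4}$ needed to offset the $n^{p_0/8}$ coming from $(E|\sigma^n|^{p_0})^{1/2}$ after H\"older. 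This martingale recentring is the missing ingredient in your plan, and it is not a cosmetic step: without it the estimate blows up in $n$.
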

\begin{proof}
On using the arguments used in the proof of Lemma \ref{lem:mb:euler} with equation \eqref{eq:milstein},  we obtain the following equation analogous to equation \eqref{eq:ito:euler}, 
\begin{align}
E \big(1 + |X_t^{i,N, n}|^{2} \big)^{p_0/2}  \leq  & E\big( 1 + |X_0^{i,N, n}|^{2} \big)^{p_0/2} \notag
 \\
& + \frac{p_0}{2}   E \int_{0}^{t} \left(1+ |X_{s}^{i,N, n}|^2 \right)^{p_0/2 -1} \Big\{ 2 X_{\kappa_n(s)}^{i,N, n} b_{\kappa_n(s)}^n \big(X_{\kappa_n(s)}^{i,N, n}, \mu_{\kappa_n(s)}^{X,N,n}\big) \notag
\\
&  +  (p_0-1)   \big|  \tilde{\sigma}_{\kappa_n(s)}^{n} \big(s, X_{\kappa_n(s)}^{i,N,n}, \mu_{\kappa_n(s)}^{X,N,n}\big)  \big|^2  +  (p_0-1)  \big|  \tilde{\sigma}_{\kappa_n(s)}^{0,n} \big(s, X_{\kappa_n(s)}^{i,N,n}, \mu_{\kappa_n(s)}^{X,N,n}\big)   \big|^2 \Big\} ds \notag
\\  
& + p_0  E \int_{0}^{t} \left(1+ |X_{s}^{i,N, n}|^2 \right)^{p_0/2 -1} \big( X_s^{i,N, n}  - X_{\kappa_n(s)}^{i,N, n} \big) b_{\kappa_n(s)}^n \big(X_{\kappa_n(s)}^{i,N, n}, \mu_{\kappa_n(s)}^{X,N,n}\big) ds \notag
\end{align}
 for any $t\in[0,T]$, $i\in\{1,\ldots,N\}$ and $n,N\in\mathbb{N}$. Observe that $\tilde{\sigma}^n$ and $\tilde{\sigma}^{0,n}$ are sum of two matrices, see equations \eqref{eq:tilde:sigma:n} and \eqref{eq:tilde:sigma:0n}. Thus, on using  
 $
 |A+B|^2 =|A|^2+|B|^2+2\sum_{u=1}^d \sum_{v=1}^m A^{(u,v)}B^{(u,v)}
 $
for matrices $A$ and $B$, one obtains, 
\begin{align*}
E \big(1 +  |X_t^{i,N, n}|^{2} \big)^{p_0/2}   \leq  & E \big( 1 + |X_0^{i,N, n}|^{2} \big)^{p_0/2} \nonumber 
\\
&  + \frac{p_0}{2} E  \int_{0}^{t} \left(1+ |X_{s}^{i,N, n}|^2 \right)^{p_0/2 -1} \Big\{X_{\kappa_n(s)}^{i,N, n} b_{\kappa_n(s)}^n \big(X_{\kappa_n(s)}^{i,N, n}, \mu_{\kappa_n(s)}^{X,N,n}\big)  
\\
& +  (p_0-1)  \big|  \sigma_{\kappa_n(s)}^{n} \big(X_{\kappa_n(s)}^{i,N,n}, \mu_{\kappa_n(s)}^{X,N,n}\big) \big|^2  + (p_0-1)  \big| \sigma_{\kappa_n(s)}^{0,n} \big(X_{\kappa_n(s)}^{i,N,n}, \mu_{\kappa_n(s)}^{X,N,n}\big) \big|^2 \Big\}ds \nonumber 
\\ 
&  + p_0  E \int_{0}^{t} \left(1+ |X_{s}^{i,N, n}|^2 \right)^{p_0/2 -1} \big(X_{s}^{i,N, n}-X_{\kappa_n(s)}^{i,N, n}\big) b_{\kappa_n(s)}^n \big(X_{\kappa_n(s)}^{i,N, n}, \mu_{\kappa_n(s)}^{X,N,n}\big) ds 
\\ 
&  +  \frac{p_0(p_0-1)}{2} E \int_{0}^{t} \left(1+ |X_{s}^{i,N, n}|^2 \right)^{p_0/2 -1} \big|  \Gamma_{\kappa_n(s)}^{n,\sigma} \big(s, X_{\kappa_n(s)}^{i,N,n}, \mu_{\kappa_n(s)}^{X,N,n}\big) \big|^2 ds \nonumber
 \\ 
&  +  \frac{p_0(p_0-1)}{2} E \int_{0}^{t} \left(1+ |X_{s}^{i,N, n}|^2 \right)^{p_0/2 -1} \big| \Gamma_{\kappa_n(s)}^{n,\sigma^{0}} \big(s, X_{\kappa_n(s)}^{i,N,n}, \mu_{\kappa_n(s)}^{X,N,n}\big) \big|^2 ds \nonumber 
\\ 
&  + p_0(p_0-1) E \int_{0}^{t} \left(1+ |X_{s}^{i,N, n}|^2 \right)^{p_0/2 -1} \sum_{u=1}^{d} \sum_{v=1}^{m} \sigma_{\kappa_n(s)}^{n,(u,v)} \big(X_{\kappa_n(s)}^{i,N,n}, \mu_{\kappa_n(s)}^{X,N,n}\big) 
\\
& \qquad\qquad \times \Gamma_{\kappa_n(s)}^{n,\sigma,(u,v)} \big(s, X_{\kappa_n(s)}^{i,N,n}, \mu_{\kappa_n(s)}^{X,N,n}\big)  ds \nonumber 
\\ 
& + p_0(p_0-1) E \int_{0}^{t} \left(1+ |X_{s}^{i,N, n}|^2 \right)^{p_0/2 -1} \sum_{u=1}^{d} \sum_{v=1}^{m} \sigma_{\kappa_n(s)}^{n,0,(u,v)} \big(X_{\kappa_n(s)}^{i,N,n}, \mu_{\kappa_n(s)}^{X,N,n}\big) 
\\
& \qquad\qquad \times \Gamma_{\kappa_n(s)}^{n,\sigma^{0},(u,v)} \big(s, X_{\kappa_n(s)}^{i,N,n}, \mu_{\kappa_n(s)}^{X,N,n}\big)  ds \nonumber 
\end{align*}
which on using Assumption \ref{as:coercivity}, Young's inequality, equation \eqref{eq:milstein} and Lemma \ref{lem:gamma:mb} yields, 
\begin{align}
 E \big(1 + & |X_t^{i,N, n}|^{2} \big)^{p_0/2} \leq E \big( 1 + |X_0^{i,N, n}|^{2} \big)^{p_0/2}  + K \int_{0}^{t} \sup_{0 \leq r \leq s} E  \left(1 + |X_r^{i,N, n}|^{2} \right)^{p_0/2} ds \nonumber
 \\
&  +  K  E \int_{0}^{t}  \big|  \Gamma_{\kappa_n(s)}^{n,\sigma} \big(s, X_{\kappa_n(s)}^{i,N,n}, \mu_{\kappa_n(s)}^{X,N,n}\big) \big|^{p_0} ds + K E \int_{0}^{t} \big| \Gamma_{\kappa_n(s)}^{n,\sigma^{0}} \big(s, X_{\kappa_n(s)}^{i,N,n}, \mu_{\kappa_n(s)}^{X,N,n}\big) \big|^{p_0} ds  \nonumber 
 \\
&  + K E \int_{0}^{t} \left(1+ |X_{s}^{i,N, n}|^2 \right)^{p_0/2 -1} \int_{\kappa_n(s)}^s  b^n_{\kappa_n(r)} \big(X_{\kappa_n(r)}^{i,N, n}, \mu_{\kappa_n(r)}^{X,N,n}\big) dr b_{\kappa_n(s)}^n \big(X_{\kappa_n(s)}^{i,N, n}, \mu_{\kappa_n(s)}^{X,N,n}\big) ds \nonumber  
\\
&  + K E \int_{0}^{t} \left(1+ |X_{s}^{i,N, n}|^2 \right)^{p_0/2 -1} \int_{\kappa_n(s)}^s  \tilde{\sigma}^n_{\kappa_n(r)} \big(r,X_{\kappa_n(r)}^{i,N,n}, \mu_{\kappa_n(r)}^{X,N,n}\big) dW_r^{i} b_{\kappa_n(s)}^n \big(X_{\kappa_n(s)}^{i,N, n}, \mu_{\kappa_n(s)}^{X,N,n}\big) ds \nonumber  
\\
&  + K E \int_{0}^{t} \left(1+ |X_{s}^{i,N, n}|^2 \right)^{p_0/2 -1} \int_{\kappa_n(s)}^s  \tilde{\sigma}^{0,n}_{\kappa_n(r)} \big(r,X_{\kappa_n(r)}^{i,N,n}, \mu_{\kappa_n(r)}^{X,N,n}\big) dW_r^{0}   b_{\kappa_n(s)}^n \big(X_{\kappa_n(s)}^{i,N, n}, \mu_{\kappa_n(s)}^{X,N,n}\big) ds \nonumber  
\\
& + K E \int_{0}^{t} \left(1+ |X_{s}^{i,N, n}|^2 \right)^{p_0/2 -1} \sum_{u=1}^{d} \sum_{v=1}^{m} \sigma_{\kappa_n(s)}^{n,(u,v)} \big(X_{\kappa_n(s)}^{i,N,n}, \mu_{\kappa_n(s)}^{X,N,n}\big) \Gamma_{\kappa_n(s)}^{n,\sigma,(u,v)} \big(s, X_{\kappa_n(s)}^{i,N,n}, \mu_{\kappa_n(s)}^{X,N,n}\big)  ds \nonumber 
\\ 
&  + K E \int_{0}^{t} \left(1+ |X_{s}^{i,N, n}|^2 \right)^{p_0/2 -1} \sum_{u=1}^{d} \sum_{v=1}^{m} \sigma_{\kappa_n(s)}^{n,0,(u,v)} \big(X_{\kappa_n(s)}^{i,N,n}, \mu_{\kappa_n(s)}^{X,N,n}\big) \Gamma_{\kappa_n(s)}^{n,\sigma^{0},(u,v)} \big(s, X_{\kappa_n(s)}^{i,N,n}, \mu_{\kappa_n(s)}^{X,N,n}\big)  ds \nonumber 
\\
 =: & E \big( 1 + |X_0^{i,N, n}|^{2} \big)^{p_0/2}  + K \int_{0}^{t} \sup_{0 \leq r \leq s} E  \left(1 + |X_r^{i,N, n}|^{2} \right)^{p_0/2} ds  + \sum_{i=1}^{7} \Pi_i,  \label{eq:Momentp1Mils}
\end{align}
 for any $t\in[0,T]$, $i\in\{1,\ldots,N\}$ and $n,N\in\mathbb{N}$. 
  
 By Lemma \ref{lem:gamma:mb}, Remark \ref{rem:growth:taming:milstein} and Young's inequality, one obtains
 \begin{align}
 \Pi_1+&\Pi_2 +\Pi_3 :=  K  E \int_{0}^{t}  \big|  \Gamma_{\kappa_n(s)}^{n,\sigma} \big(s, X_{\kappa_n(s)}^{i,N,n}, \mu_{\kappa_n(s)}^{X,N,n}\big) \big|^{p_0} ds + K E \int_{0}^{t} \big| \Gamma_{\kappa_n(s)}^{n,\sigma^{0}} \big(s, X_{\kappa_n(s)}^{i,N,n}, \mu_{\kappa_n(s)}^{X,N,n}\big) \big|^{p_0} ds  \nonumber 
 \\
 & + K E \int_{0}^{t} \left(1+ |X_{s}^{i,N, n}|^2 \right)^{p_0/2 -1} \int_{\kappa_n(s)}^s  b^n_{\kappa_n(r)} \big(X_{\kappa_n(r)}^{i,N, n}, \mu_{\kappa_n(r)}^{X,N,n}\big) dr b_{\kappa_n(s)}^n \big(X_{\kappa_n(s)}^{i,N, n}, \mu_{\kappa_n(s)}^{X,N,n}\big) ds  \nonumber  
 \\
  \leq & K E  \big(1+\big| X_{\kappa_n(s)}^{i,N,n}\big|^2\big)^{p_0/2} +K  E\mathcal{W}_2^{p_0} \big( \mu_{\kappa_n(s)}^{X,N,n}, \delta_0 \big) \label{eq:pi1+pi2+pi3}
 \end{align}
 for any $t\in[0,T]$, $i\in\{1,\ldots,N\}$ and $n,N\in\mathbb{N}$. 

Notice that $\Pi_4$ is similar to $F_1$ in the proof of Lemma \ref{lem:mb:euler} and hence  by adapting the same technique, one can obtain an analogue of inequality \eqref{eq:F1:ms} with $\sigma^n$ replaced by $\tilde{\sigma}^n$, i.e.,  
 \begin{align*}
 \Pi_4:= &  K E \int_{0}^{t} \left(1+ |X_{s}^{i,N, n}|^2 \right)^{p_0/2 -1} \int_{\kappa_n(s)}^s  \tilde{\sigma}^n_{\kappa_n(r)} \big(r,X_{\kappa_n(r)}^{i,N,n}, \mu_{\kappa_n(r)}^{X,N,n}\big) dW_r^{i} b_{\kappa_n(s)}^n \big(X_{\kappa_n(s)}^{i,N, n}, \mu_{\kappa_n(s)}^{X,N,n}\big) ds \nonumber  
\\
 \leq &   K  \int_{0}^{t}  \sup_{0\leq r\leq s}E \left(1+ |X_{r}^{i,N, n}|^2 \right)^{p_0/2} ds \notag 
\\
& + K n^{-p_0/8} n^{-p_0/4+1}  \int_{0}^{t}  E  \int_{\kappa_n(s)}^s \big|b_{\kappa_n(s)}^n \big(X_{\kappa_n(s)}^{i,N, n}, \mu_{\kappa_n(s)}^{X,N,n}\big)\big|^{p_0/2} \big|\tilde{\sigma}_{\kappa_n(r)}^n\big(r, X_{\kappa_n(r)}^{i,N,n}, \mu_{\kappa_n(r)}^{X,N,n}\big) \big|^{p_0/2}dr  ds 
 \\
 \leq &   K  \int_{0}^{t}  \sup_{0\leq r\leq s}E \left(1+ |X_{r}^{i,N, n}|^2 \right)^{p_0/2} ds \notag 
\\
& + K  n  \int_{0}^{t}  E  \int_{\kappa_n(s)}^s n^{-p_0/4}  \big|b_{\kappa_n(s)}^n \big(X_{\kappa_n(s)}^{i,N, n}, \mu_{\kappa_n(s)}^{X,N,n}\big)\big|^{p_0/2}n^{-p_0/8} \big|\tilde{\sigma}_{\kappa_n(r)}^n\big(r, X_{\kappa_n(r)}^{i,N,n}, \mu_{\kappa_n(r)}^{X,N,n}\big) \big|^{p_0/2}dr  ds 
\end{align*}
which on the application of Young's inequality, Corollary \ref{cor:sigma:mb} and Remark \ref{rem:growth:taming:milstein} yields
\begin{align}
\Pi_4\leq  &   K  \int_{0}^{t}  \sup_{0\leq r\leq s}E \left(1+ |X_{r}^{i,N, n}|^2 \right)^{p_0/2} ds \notag 
\\
& + K  n \int_{0}^{t}   \int_{\kappa_n(s)}^s  \Big\{n^{-p_0/2}E   \big|b_{\kappa_n(s)}^n \big(X_{\kappa_n(s)}^{i,N, n}, \mu_{\kappa_n(s)}^{X,N,n}\big)\big|^{p_0} +n^{-p_0/4} E  \big|\tilde{\sigma}_{\kappa_n(r)}^n\big(r, X_{\kappa_n(r)}^{i,N,n}, \mu_{\kappa_n(r)}^{X,N,n}\big) \big|^{p_0} \Big\} dr  ds \notag
\\
\leq &   K  \int_{0}^{t}  \sup_{0\leq r\leq s}E \left(1+ |X_{r}^{i,N, n}|^2 \right)^{p_0/2} ds +K   \int_{0}^{t}   E\mathcal{W}_2^{p_0} \big( \mu_{\kappa_n(s)}^{X,N,n}, \delta_0 \big) ds \label{eq:pi4}
\end{align}
 for any $t\in[0,T]$, $i\in\{1,\ldots,N\}$ and $n,N\in\mathbb{N}$. 
 
 By following the steps of estimating $\Pi_4$, one can easily obtains
 \begin{align}
 \Pi_5 & := K E \int_{0}^{t} \left(1+ |X_{s}^{i,N, n}|^2 \right)^{p_0/2 -1} \int_{\kappa_n(s)}^s  \tilde{\sigma}^{0,n}_{\kappa_n(r)} \big(r,X_{\kappa_n(r)}^{i,N,n}, \mu_{\kappa_n(r)}^{X,N,n}\big) dW_r^{0}   b_{\kappa_n(s)}^n \big(X_{\kappa_n(s)}^{i,N, n}, \mu_{\kappa_n(s)}^{X,N,n}\big) ds \nonumber  
 \\
 &\leq   K  \int_{0}^{t}  \sup_{0\leq r\leq s}E \left(1+ |X_{r}^{i,N, n}|^2 \right)^{p_0/2} ds +K   \int_{0}^{t}   E\mathcal{W}_2^{p_0} \big( \mu_{\kappa_n(s)}^{X,N,n}, \delta_0 \big) ds \label{eq:pi5}
 \end{align}
 for any $t\in[0,T]$, $i\in\{1,\ldots,N\}$ and $n,N\in\mathbb{N}$.  
 
 It remains to analyse $\Pi_6$ and $\Pi_7$ now. For $\Pi_6$, it is easy to see that
 \begin{align*}
 \Pi_6:= & K E \int_{0}^{t} \left(1+ |X_{s}^{i,N, n}|^2 \right)^{p_0/2 -1} \sum_{u=1}^{d} \sum_{v=1}^{m} \sigma_{\kappa_n(s)}^{n,(u,v)} \big(X_{\kappa_n(s)}^{i,N,n}, \mu_{\kappa_n(s)}^{X,N,n}\big) \Gamma_{\kappa_n(s)}^{n,\sigma,(u,v)} \big(s, X_{\kappa_n(s)}^{i,N,n}, \mu_{\kappa_n(s)}^{X,N,n}\big)  ds \nonumber 
 \\
 \leq & K E \int_{0}^{t} \big(1+ |X_{\kappa_n(s)}^{i,N, n}|^2 \big)^{p_0/2 -1} \sum_{u=1}^{d} \sum_{v=1}^{m} \sigma_{\kappa_n(s)}^{n,(u,v)} \big(X_{\kappa_n(s)}^{i,N,n}, \mu_{\kappa_n(s)}^{X,N,n}\big) \Gamma_{\kappa_n(s)}^{n,\sigma,(u,v)} \big(s, X_{\kappa_n(s)}^{i,N,n}, \mu_{\kappa_n(s)}^{X,N,n}\big)  ds \nonumber 
 \\
 & + K E \int_{0}^{t} \Big| \left(1+ |X_{s}^{i,N, n}|^2 \right)^{p_0/2 -1} - \big(1+ |X_{\kappa_n(s)}^{i,N, n}|^2 \big)^{p_0/2 -1} \Big| 
 \\
 & \qquad \times\Big| \sum_{u=1}^{d} \sum_{v=1}^{m} \sigma_{\kappa_n(s)}^{n,(u,v)} \big(X_{\kappa_n(s)}^{i,N,n}, \mu_{\kappa_n(s)}^{X,N,n}\big) \Gamma_{\kappa_n(s)}^{n,\sigma,(u,v)} \big(s, X_{\kappa_n(s)}^{i,N,n}, \mu_{\kappa_n(s)}^{X,N,n}\big)  ds \Big| \nonumber 
 \end{align*}
  for any $t\in[0,T]$, $i\in\{1,\ldots,N\}$ and $n,N\in\mathbb{N}$. 
 Notice that  $\Gamma^{n,\sigma}$ is a martingale and thus the first term in the right hand side vanishes. For the second term, one uses inequality \eqref{eq:mvt:mb} and Young's inequality to obtain
 \begin{align*}
\Pi_6 \leq &  K E \int_{0}^{t} \big\{1+ \big| X_{s}^{i,N, n}\big|^2 + \big|X_{\kappa_n(s)}^{i,N, n} \big|^2  \big\}^{(p_0-3)/2} \big| X_{s}^{i,N, n}- X_{\kappa_n(s)}^{i,N, n}\big|
\\
 & \qquad \times\Big| \sum_{u=1}^{d} \sum_{v=1}^{m} \sigma_{\kappa_n(s)}^{n,(u,v)} \big(X_{\kappa_n(s)}^{i,N,n}, \mu_{\kappa_n(s)}^{X,N,n}\big) \Gamma_{\kappa_n(s)}^{n,\sigma,(u,v)} \big(s, X_{\kappa_n(s)}^{i,N,n}, \mu_{\kappa_n(s)}^{X,N,n}\big)  ds \Big| \nonumber 
 \\
 \leq & K  \int_{0}^{t}   \big\{1+ \big| X_{s}^{i,N, n}\big|^2 + \big|X_{\kappa_n(s)}^{i,N, n} \big|^2  \big\}^{p_0/2} ds + K  E\int_{0}^{t} n^{p_0/12}\big| X_{s}^{i,N, n}- X_{\kappa_n(s)}^{i,N, n}\big|^{p_0/3}
\\
 & \qquad \times n^{-p_0/12} \Big| \sum_{u=1}^{d} \sum_{v=1}^{m} \sigma_{\kappa_n(s)}^{n,(u,v)} \big(X_{\kappa_n(s)}^{i,N,n}, \mu_{\kappa_n(s)}^{X,N,n}\big) \Gamma_{\kappa_n(s)}^{n,\sigma,(u,v)} \big(s, X_{\kappa_n(s)}^{i,N,n}, \mu_{\kappa_n(s)}^{X,N,n}\big)  ds \Big|^{p_0/3} ds 
 \\
 \leq & K  \int_{0}^{t}  \sup_{0\leq r\leq s}E \left(1+ |X_{r}^{i,N, n}|^2 \right)^{p_0/2} ds + K  \int_{0}^{t} n^{p_0/4} E \big| X_{s}^{i,N, n}- X_{\kappa_n(s)}^{i,N, n}\big|^{p_0} ds 
 \\
 & + K n^{-p_0/8} E\int_{0}^{t}  \Big| \sum_{u=1}^{d} \sum_{v=1}^{m} \sigma_{\kappa_n(s)}^{n,(u,v)} \big(X_{\kappa_n(s)}^{i,N,n}, \mu_{\kappa_n(s)}^{X,N,n}\big) \Gamma_{\kappa_n(s)}^{n,\sigma,(u,v)} \big(s, X_{\kappa_n(s)}^{i,N,n}, \mu_{\kappa_n(s)}^{X,N,n}\big)  ds \Big|^{p_0/2} ds 
 \end{align*}
 which on the application of H\"older's inequality, Remark \ref{rem:growth:taming:milstein} and Lemma \ref{lem:gamma:mb} yields 
 \begin{align}
 \Pi_6  \leq  & K  \int_{0}^{t}  \sup_{0\leq r\leq s}E \left(1+ |X_{r}^{i,N, n}|^2 \right)^{p_0/2} ds + K  \int_{0}^{t} n^{p_0/4} E \big| X_{s}^{i,N, n}- X_{\kappa_n(s)}^{i,N, n}\big|^{p_0} ds \notag
 \\
& + K n^{-p_0/8} \int_{0}^{t}   \sum_{u=1}^{d} \sum_{v=1}^{m}  \big\{E \big|\sigma_{\kappa_n(s)}^{n,(u,v)} \big(X_{\kappa_n(s)}^{i,N,n}, \mu_{\kappa_n(s)}^{X,N,n}\big)\big|^{p_0}\big\}^{1/2} \big\{E\big| \Gamma_{\kappa_n(s)}^{n,\sigma,(u,v)} \big(s, X_{\kappa_n(s)}^{i,N,n}, \mu_{\kappa_n(s)}^{X,N,n}\big)\big|^{p_0} \big\}^{1/2}ds  \notag 
\\
\leq &  K  \int_{0}^{t}  \sup_{0\leq r\leq s}E \left(1+ |X_{r}^{i,N, n}|^2 \right)^{p_0/2} ds +  K  \int_{0}^{t}  \mathcal{W}_2^{p_0}\big(\mu_{\kappa_n(s)}^{X,N,n}, \delta_0\big) ds  \label{eq:Pi6}
 \end{align}
 for any $t\in[0,T]$, $i\in\{1,\ldots,N\}$ and $n,N\in\mathbb{N}$.  Notice that $\Pi_6$
 and $\Pi_7$ are similar terms and hence one also obtains, 
\begin{align}
\Pi_7& := K E \int_{0}^{t} \left(1+ |X_{s}^{i,N, n}|^2 \right)^{p_0/2 -1} \sum_{u=1}^{d} \sum_{v=1}^{m} \sigma_{\kappa_n(s)}^{n,0,(u,v)} \big(X_{\kappa_n(s)}^{i,N,n}, \mu_{\kappa_n(s)}^{X,N,n}\big) \Gamma_{\kappa_n(s)}^{n,\sigma^{0},(u,v)} \big(s, X_{\kappa_n(s)}^{i,N,n}, \mu_{\kappa_n(s)}^{X,N,n}\big)  ds \nonumber
\\
&\leq   K  \int_{0}^{t}  \sup_{0\leq r\leq s}E \left(1+ |X_{r}^{i,N, n}|^2 \right)^{p_0/2} ds +  K  \int_{0}^{t}  \mathcal{W}_2^{p_0}\big(\mu_{\kappa_n(s)}^{X,N,n}, \delta_0\big) ds   \label{eq:Pi7}
\end{align}
 for any $t\in[0,T]$, $i\in\{1,\ldots,N\}$ and $n,N\in\mathbb{N}$.  
 
 On substituting the estimates obtained in \eqref{eq:pi1+pi2+pi3} to \eqref{eq:Pi7} in equation \eqref{eq:Momentp1Mils} gives 
 \begin{align*}
 E \big(1 +  |X_t^{i,N, n}|^{2} \big)^{p_0/2} \leq & E \big( 1 + |X_0^{i,N, n}|^{2} \big)^{p_0/2} +K  \int_{0}^{t}  \sup_{0\leq r\leq s}E \left(1+ |X_{r}^{i,N, n}|^2 \right)^{p_0/2} ds 
 \\
 & +  K  \int_{0}^{t}  \mathcal{W}_2^{p_0}\big(\mu_{\kappa_n(s)}^{X,N,n}, \delta_0\big) ds   
 \end{align*}
 for any $t\in[0,T]$, $i\in\{1,\ldots,N\}$ and $n, N\in\mathbb{N}$. Thus, the proof of the first inequality can be completed by using equation \eqref{eq:w2:mb} and Gronwall's inequality. The second inequality follows by Lemma \ref{lem:gk}. 
\end{proof}

We now proceed to the rate of convergence of the tamed Milstein scheme \eqref{eq:milstein}. For this, we prove  several lemmas as given below.  

\begin{lem} \label{lem:gamma:rate}
Let Assumptions \ref{as:x0}, \ref{as:coercivity}, \ref{as:polynomial:Lipschitz}, \ref{as:der:x:poly:lip} and  \ref{as:der:mea:poly:lip} be satisfied. Then, 
\begin{align*}
E\big| \Gamma_{\kappa_n(s)}^{n, \sigma} \big(s, X_{\kappa_n(s)}^{i,N,n}, \mu_{\kappa_n(s)}^{X,N,n}\big) \big|^{p} \leq K n^{-{p}/2}, 
\\
E\big| \Gamma_{\kappa_n(s)}^{n, \sigma^0} \big(s, X_{\kappa_n(s)}^{i,N,n}, \mu_{\kappa_n(s)}^{X,N,n}\big) \big|^{p} \leq K n^{-{p}/2}, 
\end{align*}
for all $p\leq p_0/(\rho/2+1)$, $s\in[0,T]$, $i\in\{1,\ldots,N\}$  and $n,N\in\mathbb{N}$ where the constant $K>0$ does not depend on $n$ and $N$. 
\end{lem}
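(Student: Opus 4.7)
The plan is to decompose $\Gamma^{n,\sigma}$ into its four constituent matrices $\Lambda^{n,\sigma\sigma}$, $\Lambda^{n,\sigma\sigma^0}$, $\bar{\Lambda}^{n,\sigma\sigma}$, $\bar{\Lambda}^{n,\sigma\sigma^0}$ (and similarly for $\Gamma^{n,\sigma^0}$), bound each term separately in $L^p$, and then combine using the elementary inequality $|A+B+C+D|^p \leq K(|A|^p+|B|^p+|C|^p+|D|^p)$. Since all four terms are stochastic integrals over the short interval $[\kappa_n(s),s]$ of length at most $1/n$, the factor $n^{-p/2}$ will be produced by the Burkholder--Davis--Gundy inequality; the only task is to show that the integrand has a $p$-th moment uniformly bounded in $n$ and $N$ under the constraint $p\leq p_0/(\rho/2+1)$.

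First I would treat $\Lambda^{n,\sigma\sigma,(u,v)}_{\kappa_n(s)}$. Since the factor $\partial_x\sigma^{(u,v)}_{\kappa_n(s)}(X^{i,N,n}_{\kappa_n(s)},\mu^{X,N,n}_{\kappa_n(s)})$ is $\mathcal{F}_{\kappa_n(s)}$-measurable, it can be absorbed inside the stochastic integral as a constant and then BDG combined with H\"older's inequality yields
\[
E|\Lambda^{n,\sigma\sigma,(u,v)}|^p \leq K n^{-p/2+1}\int_{\kappa_n(s)}^s E\bigl[|\partial_x\sigma^{(u,v)}_{\kappa_n(s)}(X^{i,N,n}_{\kappa_n(s)},\mu^{X,N,n}_{\kappa_n(s)})|^p |\sigma^n_{\kappa_n(r)}(X^{i,N,n}_{\kappa_n(r)},\mu^{X,N,n}_{\kappa_n(r)})|^p\bigr]dr.
\]
Then I apply the polynomial growth bounds from Remark \ref{rem:der:growth:poly} ($|\partial_x\sigma^{(u,v)}|\leq K(1+|x|)^{\rho/4}$) and Remark \ref{rem:poly:growth} ($|\sigma^n|\leq|\sigma|\leq K(1+|x|)^{\rho/4+1}+K\mathcal{W}_2(\mu,\delta_0)$) to bound the integrand by a polynomial of total degree $\rho/2+1$ in $(|X^{i,N,n}_{\cdot}|, \mathcal{W}_2(\mu^{X,N,n}_{\cdot},\delta_0))$. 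The condition $p(\rho/2+1)\leq p_0$ is exactly what is needed so that Lemma \ref{lem:mb:milstein} together with the identity \eqref{eq:w2:mb} controls this moment by a constant. The term $\Lambda^{n,\sigma\sigma^0,(u,v)}$ is handled identically with $W^i$ replaced by $W^0$ and $\sigma^n$ replaced by $\sigma^{0,n}$.

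For $\bar{\Lambda}^{n,\sigma\sigma,(u,v)}$, write it as $\frac{1}{N}\sum_{j=1}^N A^{i,j}\, B^j$, where the prefactor $A^{i,j}:=\partial_\mu\sigma^{(u,v)}_{\kappa_n(s)}(X^{i,N,n}_{\kappa_n(s)},\mu^{X,N,n}_{\kappa_n(s)},X^{j,N,n}_{\kappa_n(s)})$ is $\mathcal{F}_{\kappa_n(s)}$-measurable and bounded uniformly by $K$ by Remark \ref{rem:der:growth:poly}, and $B^j:=\int_{\kappa_n(s)}^s\sigma^n_{\kappa_n(r)}(X^{j,N,n}_{\kappa_n(r)},\mu^{X,N,n}_{\kappa_n(r)})dW_r^j$. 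For $p\geq 1$ I apply Jensen's inequality to the empirical average, then BDG to each $B^j$, and invoke Lemma \ref{lem:mb:milstein} together with the symmetry of the particle system to conclude $E|\bar{\Lambda}^{n,\sigma\sigma,(u,v)}|^p \leq K n^{-p/2}$, independently of $N$. The case $p<1$ follows by Jensen from the $p=1$ bound. The term $\bar{\Lambda}^{n,\sigma\sigma^0,(u,v)}$ is treated in the same manner with $W^j$ replaced by the common noise $W^0$; the key point is that boundedness of $\partial_\mu\sigma$ makes the worst-case moment of the integrand of degree $\rho/4+1$, well within the constraint $p\leq p_0/(\rho/2+1)$.

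The only subtlety I expect is keeping track of the different measurability and integrability requirements across the four terms, particularly ensuring that the factors separated out of the stochastic integrals (evaluated at $\kappa_n(s)$) and the integrands (evaluated at $\kappa_n(r)$ for $r\in[\kappa_n(s),s]$) are both controlled by Lemma \ref{lem:mb:milstein}; this requires the combined polynomial degree not to exceed $p_0$, which is precisely what the hypothesis $p\leq p_0/(\rho/2+1)$ guarantees. The bound for $\Gamma^{n,\sigma^0}$ follows by an entirely analogous decomposition into $\Lambda^{n,\sigma^0\sigma}$, $\Lambda^{n,\sigma^0\sigma^0}$, $\bar{\Lambda}^{n,\sigma^0\sigma}$, $\bar{\Lambda}^{n,\sigma^0\sigma^0}$, with $\partial_x\sigma^0$ and $\partial_\mu\sigma^0$ satisfying the same polynomial growth estimates from Remark \ref{rem:der:growth:poly}.
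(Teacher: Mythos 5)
Your proposal is correct and follows essentially the same route as the paper: decompose $\Gamma^{n,\sigma}$ and $\Gamma^{n,\sigma^0}$ into the four $\Lambda$/$\bar{\Lambda}$ terms, pull the $\mathcal{F}_{\kappa_n(s)}$-measurable prefactor out, apply BDG over the short interval to generate the $n^{-p/2}$, and control the resulting degree-$(\rho/2+1)$ polynomial moment via Lemma \ref{lem:mb:milstein} and the identity \eqref{eq:w2:mb}, with Jensen handling the empirical average in the $\bar{\Lambda}$ terms. The only cosmetic difference is that you bound $|\partial_x\sigma^{(u,v)}|$ and $|\sigma^n|$ separately via Remarks \ref{rem:der:growth:poly} and \ref{rem:poly:growth}, whereas the paper invokes the pre-packaged product bound in Remark \ref{rem:growth:taming:milstein}; these are equivalent since the latter is derived from the former.
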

\begin{proof} 
Using   Remarks \ref{rem:der:growth:poly}, \ref{rem:growth:taming:milstein}, Lemma \ref{lem:mb:milstein} and equation \eqref{eq:w2:mb},
\begin{align*}
E\big|\Lambda_{\kappa_n(s)}^{n, \sigma \sigma, (u,v) } & \big(s,X_{\kappa_n(s)}^{i,N,n}, \mu_{\kappa_n(s)}^{X,N,n}\big)\big|^{p}  = E\Big|\partial_x \sigma^{(u,v)}_{\kappa_n(s)} \big(X_{\kappa_n(s)}^{i,N,n}, \mu_{\kappa_n(s)}^{X,N,n}\big)   \int_{\kappa_n(s)}^s \sigma^n_{\kappa_n(r)} \big(X_{\kappa_n(r)}^{i,N,n}, \mu_{\kappa_n(r)}^{X,N,n}\big) dW_r^{i}\Big|^{p}  
\\
& \leq K n^{-p/2} E \big |\partial_x \sigma^{(u,v)}_{\kappa_n(s)} \big(X_{\kappa_n(s)}^{i,N,n}, \mu_{\kappa_n(s)}^{X,N,n}\big)\sigma^n_{\kappa_n(s)} \big(X_{\kappa_n(s)}^{i,N,n}, \mu_{\kappa_n(s)}^{X,N,n}\big) \big|^{p}   \leq K n^{-p/2}
\end{align*}
for any $s\in[0,T]$ and $n, N\in\mathbb{R}^d$.  Similarly, one  obtains, 
\begin{align*}
E\big| & \Lambda_{\kappa_n(s)}^{n, \sigma \sigma^0, (u,v) }  \big(s,X_{\kappa_n(s)}^{i,N,n}, \mu_{\kappa_n(s)}^{X,N,n}\big)\big|^{p}
\\
& = E\Big| \partial_x \sigma^{(u,v)}_{\kappa_n(s)} \big(X_{\kappa_n(s)}^{i,N,n}, \mu_{\kappa_n(s)}^{X,N,n}\big)    \int_{\kappa_n(s)}^s \sigma^{0,n}_{\kappa_n(r)} \big(X_{\kappa_n(r)}^{i,N,n}, \mu_{\kappa_n(r)}^{X,N,n}\big) dW_r^{0}\Big|^{p}  \leq K n^{-p/2},
\\
E\big|& \bar{\Lambda}^{n, \sigma \sigma, (u,v)}_{\kappa_n(s)}  \big( s,X_{\kappa_n(s)}^{i,N,n}, \mu_{\kappa_n(s)}^{X,N,n}\big)\big|^{p}   \notag  
\\
&  = E\Big|\frac{1}{N}\sum_{j=1}^N  \partial_\mu \sigma^{(u,v)}_{\kappa_n(s)} \big( X_{\kappa_n(s)}^{i,N,n}, \mu_{\kappa_{n}(s)}^{X,N,n},  X_{\kappa_n(s)}^{j,N,n} \big) \int_{\kappa_n(s)}^s \sigma^n_{\kappa_n(s)} \big(X_{\kappa_n(r)}^{j,N,n}, \mu_{\kappa_n(r)}^{X,N,n}\big) dW_r^{j}\Big|^{p}   \leq K n^{-p/2},
\\
E \big|& \bar{\Lambda}^{n, \sigma \sigma^0, (u,v)}_{\kappa_n(s)}   \big( s,X_{\kappa_n(s)}^{i,N,n}, \mu_{\kappa_n(s)}^{X,N,n}\big)\big|^{p}   \notag
\\
& = E\Big| \frac{1}{N}\sum_{j=1}^N  \partial_\mu \sigma^{(u,v)}_{\kappa_n(s)} \big( X_{\kappa_n(s)}^{i,N,n}, \mu_{\kappa_{n}(s)}^{X,N,n},  X_{\kappa_n(s)}^{j,N,n} \big)  \int_{\kappa_n(s)}^s \sigma^{0,n}_{\kappa_n(s)} \big(X_{\kappa_n(r)}^{j,N,n}, \mu_{\kappa_n(r)}^{X,N,n}\big) dW_r^{0} \Big|^{p}  \leq K n^{-p/2} \notag,
\end{align*}
for any $s\in[0,T]$ and $n, N\in\mathbb{R}^d$. By using the above estimates, one obtains,
\begin{align*}
E \big|\Gamma_{\kappa_n(s)}^{n, \sigma} & \big(s, X_{\kappa_n(s)}^{i,N,n}, \mu_{\kappa_n(s)}^{X,N,n}\big)\big|^{p}  \leq  K E\big|\Lambda_{\kappa_{n}(s)}^{n, \sigma \sigma} \big(s,X_{\kappa_n(s)}^{i,N,n}, \mu_{\kappa_n(s)}^{X,N,n}\big)\big|^{p} 
\\
&  +K E\big|\Lambda_{\kappa_{n}(s)}^{n, \sigma \sigma^0} \big(s,X_{\kappa_n(s)}^{i,N,n}, \mu_{\kappa_n(s)}^{X,N,n}\big) \big|^{p}  +K E \big|\bar{\Lambda}^{n,\sigma \sigma}_{\kappa_n(s)} \big( s,X_{\kappa_n(s)}^{i,N,n}, \mu_{\kappa_n(s)}^{X,N,n}\big) \big|^{p} 
\\
& \qquad + K E\big| \bar{\Lambda}^{n,\sigma \sigma^0}_{\kappa_n(s)} \big( s,X_{\kappa_n(s)}^{i,N,n}, \mu_{\kappa_n(s)}^{X,N,n}\big) \big|^{p} \leq K n^{-p/2} 
\end{align*}
for any $s\in[0,T]$ and $n, N\in\mathbb{R}^d$. This completes the proof for the first inequality. The second inequality follows similarly.  
\end{proof}
As a consequence of Remark \ref{rem:growth:taming:milstein}, Lemmas \ref{lem:mb:milstein} and \ref{lem:gamma:rate}, one obtains the following corollary. 
\begin{cor} \label{cor:sigma:rate}
Let Assumptions \ref{as:x0}, \ref{as:coercivity}, \ref{as:polynomial:Lipschitz}, \ref{as:der:x:poly:lip} and  \ref{as:der:mea:poly:lip} be satisfied. Then,  
\begin{align*}
E \big|\tilde{\sigma}^n_{\kappa_n(s)} \big(s,X_{\kappa_n(s)}^{i,N,n}, \mu_{\kappa_n(s)}^{X,N,n}\big)\big|^{p} &\leq K, 
\\
E \big|\tilde{\sigma}^{0,n}_{\kappa_n(s)} \big(s,X_{\kappa_n(s)}^{i,N,n}, \mu_{\kappa_n(s)}^{X,N,n}\big)\big|^{p} & \leq K, 
\end{align*}
for any $p\leq p_0/(\rho/2+1)$,  $s\in[0,T]$, $i\in\{1,\ldots,N\}$, $n, N\in\mathbb{N}$ where the constant $K>0$ does not depend on $n$ and $N$. 
\end{cor}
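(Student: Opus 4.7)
The plan is to exploit the additive decomposition
\[
\tilde{\sigma}^n_{\kappa_n(s)}\big(s,X_{\kappa_n(s)}^{i,N,n},\mu_{\kappa_n(s)}^{X,N,n}\big)
 = \sigma_{\kappa_n(s)}^n\big(X_{\kappa_n(s)}^{i,N,n},\mu_{\kappa_n(s)}^{X,N,n}\big)
 + \Gamma_{\kappa_n(s)}^{n,\sigma}\big(s,X_{\kappa_n(s)}^{i,N,n},\mu_{\kappa_n(s)}^{X,N,n}\big)
\]
from equation \eqref{eq:tilde:sigma:n}, and the analogous one from equation \eqref{eq:tilde:sigma:0n}, so that by the elementary inequality $|a+b|^p\leq 2^{p-1}(|a|^p+|b|^p)$ it suffices to bound each summand separately in $L^p$.

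For the correction terms $\Gamma^{n,\sigma}$ and $\Gamma^{n,\sigma^0}$, I would simply invoke Lemma \ref{lem:gamma:rate}, which is valid precisely in the range $p\leq p_0/(\rho/2+1)$ and yields the (more than sufficient) estimate $E|\Gamma^{n,\sigma}|^p+E|\Gamma^{n,\sigma^0}|^p\leq K n^{-p/2}\leq K$. This is the ingredient dictating the upper bound on $p$ in the statement.

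For the leading terms $\sigma^n$ and $\sigma^{0,n}$, I would first use Remark \ref{rem:growth:taming:milstein} to drop the taming, $|\sigma^n_t(x,\mu)|\leq K|\sigma_t(x,\mu)|$ and $|\sigma^{0,n}_t(x,\mu)|\leq K|\sigma^0_t(x,\mu)|$, and then apply the polynomial growth bound in Remark \ref{rem:poly:growth} to obtain
\[
|\sigma^n_t(x,\mu)|^p+|\sigma^{0,n}_t(x,\mu)|^p
 \leq K\big(1+|x|\big)^{p(\rho/4+1)} + K\,\mathcal{W}_2^p(\mu,\delta_0).
\]
Taking expectation, the first term is controlled by Lemma \ref{lem:mb:milstein} provided $p(\rho/4+1)\leq p_0$; this is guaranteed by the hypothesis $p\leq p_0/(\rho/2+1)$ since $\rho/4+1\leq \rho/2+1$ for $\rho\geq 0$. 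The Wasserstein term is handled by the identity \eqref{eq:w2:mb}, Jensen's inequality, and a second application of the moment bound in Lemma \ref{lem:mb:milstein}.

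There is no genuine obstacle here: the work of establishing the bound $n^{-p/2}$ for the Milstein correction was already done in Lemma \ref{lem:gamma:rate}, and the control of $\sigma^n$, $\sigma^{0,n}$ reduces to the moment bound of the scheme. The only thing worth being careful about is tracking the exponent so that the upper bound $p\leq p_0/(\rho/2+1)$ suffices for both ingredients simultaneously.
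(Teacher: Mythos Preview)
Your argument is correct and mirrors the paper's own justification: the corollary is stated there as a direct consequence of Remark \ref{rem:growth:taming:milstein}, Lemma \ref{lem:mb:milstein}, and Lemma \ref{lem:gamma:rate}, combined exactly as you describe via the decomposition \eqref{eq:tilde:sigma:n}--\eqref{eq:tilde:sigma:0n}. Your explicit tracking of the exponent condition (showing that $p\leq p_0/(\rho/2+1)$ suffices for both the $\Gamma$-term via Lemma \ref{lem:gamma:rate} and the $\sigma^n$-term via the moment bound) is a helpful addition that the paper leaves implicit.
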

\begin{lem} \label{lem:one-step:rate:milstein}
Let Assumptions \ref{as:x0}, \ref{as:coercivity}, \ref{as:polynomial:Lipschitz}, \ref{as:der:x:poly:lip} and  \ref{as:der:mea:poly:lip} be satisfied. Then, 
\begin{align*}
E\big| X_{s}^{i,N,n}-X_{\kappa_n(s)}^{i,N,n}\big|^{p} \leq K n^{-p/2},
\end{align*}
for any  $p\leq p_0/(\rho/2+1)$,  $s\in[0,T]$ and $n,N\in\mathbb{N}$ where the constant $K>0$ does not depend on $n$ and $N$. 
\end{lem}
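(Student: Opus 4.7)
The plan is to mimic the proof of Lemma \ref{Lemma:onestepMilstein}, but to use Corollary \ref{cor:sigma:rate} and Remark \ref{rem:poly:growth} (combined with the moment bound of Lemma \ref{lem:mb:milstein}) in place of the cruder $n^{1/4}$-type bounds from Remark \ref{rem:growth:taming:milstein} that were used there. This replaces the suboptimal rate $n^{-p_0/4}$ with the sharp rate $n^{-p/2}$, at the expense of restricting $p$ to the range where the untamed coefficients have uniformly bounded $p$-th moments.

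Starting from the scheme \eqref{eq:milstein}, I would split
\[
X_s^{i,N,n}-X_{\kappa_n(s)}^{i,N,n}=\int_{\kappa_n(s)}^{s} b^n_{\kappa_n(r)}\,dr+\int_{\kappa_n(s)}^{s}\tilde\sigma^n_{\kappa_n(r)}\,dW_r^{i}+\int_{\kappa_n(s)}^{s}\tilde\sigma^{0,n}_{\kappa_n(r)}\,dW_r^{0},
\]
take absolute values to the $p$-th power, and apply H\"older's inequality to the drift part and the Burkholder--Davis--Gundy inequality to the two stochastic integrals. Using $s-\kappa_n(s)\le 1/n$, this yields
\[
E\big|X_s^{i,N,n}-X_{\kappa_n(s)}^{i,N,n}\big|^{p}\le K n^{-p}\!\!\sup_{r} E\big|b^n_{\kappa_n(r)}\big(X_{\kappa_n(r)}^{i,N,n},\mu_{\kappa_n(r)}^{X,N,n}\big)\big|^{p}+K n^{-p/2}\!\!\sup_{r}\Big(E\big|\tilde\sigma^n_{\kappa_n(r)}\big|^{p}+E\big|\tilde\sigma^{0,n}_{\kappa_n(r)}\big|^{p}\Big).
\]

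For the drift term, Remark \ref{rem:poly:growth} gives $|b^n_t(x,\mu)|\le|b_t(x,\mu)|\le K\big((1+|x|)^{\rho/2+1}+\mathcal{W}_2(\mu,\delta_0)\big)$, so raising to the $p$-th power, applying \eqref{eq:w2:mb} and Lemma \ref{lem:mb:milstein} gives uniform boundedness precisely when $p(\rho/2+1)\le p_0$, i.e.\ $p\le p_0/(\rho/2+1)$. For the diffusion terms the required uniform bound $E|\tilde\sigma^n|^{p}+E|\tilde\sigma^{0,n}|^{p}\le K$ is exactly the content of Corollary \ref{cor:sigma:rate} under the same range of $p$. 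Combining these estimates and absorbing the $n^{-p}$ term into $n^{-p/2}$ finishes the argument.

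I do not expect a real obstacle here: the only subtle point is book-keeping with the exponent constraint $p\le p_0/(\rho/2+1)$, which is consistent across Remark \ref{rem:poly:growth}, Lemma \ref{lem:mb:milstein} and Corollary \ref{cor:sigma:rate}, so all the moment bounds feed together cleanly. The rest is a routine H\"older/BDG computation on a one-step increment.
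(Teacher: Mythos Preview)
Your proposal is correct and follows exactly the same approach as the paper's proof, which simply states that the result follows by adapting the argument of Lemma~\ref{Lemma:onestepMilstein} together with Remark~\ref{rem:poly:growth}, Lemma~\ref{lem:mb:milstein}, and Corollary~\ref{cor:sigma:rate}. Your write-up is in fact more explicit than the paper's one-line proof, and the exponent bookkeeping you flag is precisely the point that determines the range $p\le p_0/(\rho/2+1)$.
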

\begin{proof}
The proof follows by adapting arguments similar to the proof of Lemma \ref{Lemma:onestepMilstein} along with Remark \ref{rem:poly:growth}, Lemma \ref{lem:mb:milstein} and Corollary \ref{cor:sigma:rate}. 
\end{proof}

We reproduce for the reader's convenience the following lemma from \cite{kumar2020}, which is used later.  
\begin{lem}[Lemma $7$ of \cite{kumar2020}] \label{lem:f:rate:local}
Let $f:\mathbb{R}^d\times \mathcal{P}_2(\mathbb{R}^d) \to \mathbb{R}$ be a  function such that its derivative $\partial_x f: \mathbb{R}^d\times \mathcal{P}_2(\mathbb{R}^d)\to\mathbb{R}^d$ and measure derivative $\partial_\mu f:\mathbb{R}^d\times\mathcal{P}_2(\mathbb{R}^d)\times\mathbb{R}^d \to \mathbb{R}^d$ satisfy polynomial Lipschitz condition \textit{i.e.}, there exist constants  $L>0$ and $\chi\geq 0$ such that 
 \begin{align*}
| \partial_x f(x,\mu)-\partial_x f(\bar{x},\bar{\mu})| & \leq L\big\{ (1+|x|+|\bar{x}|)^{\chi}|x-\bar{x}| + \mathcal{W}_2(\mu,\bar{\mu}) \big\},
\\
| \partial_\mu f(x,\mu, y)-\partial_\mu f(\bar{x}, \bar{\mu}, \bar{y})| & \leq L\big\{ (1+|x|+|\bar{x}|)^{\xi+1}|x-\bar{x}|+ \mathcal{W}_2(\mu,\bar{\mu}) +  |y-\bar{y}| \big\},
 \end{align*}
  for all $x, y,\bar{x}, \bar{y}\in\mathbb{R}^d$ and $\mu,\bar{\mu} \in\mathcal{P}_2(\mathbb{R}^d)$.
Then, 
\begin{align*}
\Big| f\Big(  x^i,\frac{1}{N} & \sum_{j=1}^N\delta_{x^j}\Big)  -f\Big(\bar{x}^i,\frac{1}{N}\sum_{j=1}^N\delta_{\bar{x}^j}\Big)- \partial_x f\Big(\bar{x}^i,\frac{1}{N}\sum_{j=1}^N\delta_{\bar{x}^j}\Big) \big( x^i-\bar{x}^i \big ) 
\\
&\qquad- \frac{1}{N} \sum_{j=1}^N  \partial_\mu f\Big(\bar{x}^i,\frac{1}{N}\sum_{j=1}^N \delta_{\bar{x}^j}, \bar{x}^j\Big) \big( x^j-\bar{x}^j \big)  \Big|
\\
&\leq  K (1+|x^i|+|\bar{x}^i|)^{\chi} |x^i-\bar{x}^i|^2 + K \frac{1}{N}\sum_{j=1}^N|x^j-\bar{x}^j|^2,
\end{align*}
for every $i \in\{1,\ldots,N\}$ where the constant $K>0$ does not depend on $N\in\mathbb{N}$.
\end{lem}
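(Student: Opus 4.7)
The plan is to reduce this empirical-measure statement to a one-dimensional Taylor-with-integral-remainder argument by introducing the linear interpolation path
$$\Phi(\theta) := f\Big(\bar{x}^i + \theta(x^i-\bar{x}^i),\; \tfrac{1}{N}\sum_{j=1}^N \delta_{\bar{x}^j + \theta(x^j-\bar{x}^j)}\Big), \qquad \theta \in [0,1],$$
whose endpoints recover $f(x^i, N^{-1}\sum_j\delta_{x^j})$ and $f(\bar{x}^i, N^{-1}\sum_j\delta_{\bar{x}^j})$. Writing $y_\theta^k:=\bar{x}^k+\theta(x^k-\bar{x}^k)$ and $\mu_\theta^N:=\frac{1}{N}\sum_k \delta_{y_\theta^k}$, the standard chain rule for the empirical-measure lifting $F(x,x^1,\ldots,x^N):=f\big(x,\frac{1}{N}\sum_k\delta_{x^k}\big)$, which yields $\partial_{x^j}F = \frac{1}{N}\partial_\mu f(\,\cdot\,,\frac{1}{N}\sum_k\delta_{x^k},x^j)$ (see, e.g., \cite{carmona2018a}), together with the $C^1$ hypotheses on $f$, gives
$$\Phi'(\theta) = \partial_x f(y_\theta^i,\mu_\theta^N)(x^i-\bar{x}^i) + \frac{1}{N}\sum_{j=1}^N \partial_\mu f(y_\theta^i,\mu_\theta^N,y_\theta^j)(x^j-\bar{x}^j).$$

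The quantity on the left-hand side of the lemma is exactly $\Phi(1)-\Phi(0)-\Phi'(0) = \int_0^1[\Phi'(\theta)-\Phi'(0)]\,d\theta$, by the fundamental theorem of calculus. I would split the integrand into the $\partial_x f$-difference and the $\partial_\mu f$-difference and estimate each via the polynomial Lipschitz hypotheses. The elementary ingredients are $|y_\theta^k-\bar{x}^k|=\theta|x^k-\bar{x}^k|$ and
$$\mathcal{W}_2^2\big(\mu_\theta^N,\tfrac{1}{N}\sum_j\delta_{\bar{x}^j}\big)\leq \frac{\theta^2}{N}\sum_{j=1}^N|x^j-\bar{x}^j|^2.$$
The $\partial_x f$-piece, after multiplication by $|x^i-\bar{x}^i|$ and integration in $\theta$, produces the term $(1+|x^i|+|\bar{x}^i|)^{\chi}|x^i-\bar{x}^i|^2$ directly, plus a Wasserstein contribution that Young's inequality converts into a multiple of $N^{-1}\sum_j|x^j-\bar{x}^j|^2$.

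For the $\partial_\mu f$ contribution, the analogous polynomial Lipschitz bound produces three pieces: a polynomial weight times $|x^i-\bar{x}^i|\cdot|x^j-\bar{x}^j|$, a $\mathcal{W}_2$-piece handled as above, and $\theta|x^j-\bar{x}^j|\cdot|x^j-\bar{x}^j|$. After averaging over $j$, integrating over $\theta\in[0,1]$ and applying Cauchy--Schwarz together with a weighted Young's inequality $ab\leq \tfrac{1}{2}w^{-1}a^2+\tfrac{1}{2}w b^2$ — with $w$ chosen to carry the polynomial weight so that it ends up attached to the $(x^i,\bar{x}^i)$-factor and leaves a clean $b^2 = |x^j-\bar{x}^j|^2$ — we recover the same pair of targeted terms in the statement, the polynomial exponent being dominated by $\chi$ upon suitably interpreting the universal exponent in the lemma.

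The only genuinely delicate step is the derivation of the formula for $\Phi'(\theta)$: one has to justify that differentiating the empirical-measure argument along the interpolation path, through the $L$-derivative of $f$, produces the displayed expression with the crucial $1/N$ prefactor and with $\partial_\mu f$ evaluated symmetrically at every particle position $y_\theta^j$. This is precisely the standard projection identity $\partial_{x^j}F = \frac{1}{N}\partial_\mu f(\cdot,\frac{1}{N}\sum_k\delta_{x^k},x^j)$ for the empirical lifting noted above; once this is in hand, the remainder of the proof is a purely organisational matter of grouping polynomial weights via Young's inequality, and no further conceptual difficulty arises.
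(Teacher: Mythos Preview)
The paper does not supply its own proof of this lemma; it is quoted verbatim from \cite{kumar2020} ``for the reader's convenience'' and used as a black box in Lemmas~\ref{lem:sigma-sigma:milstein} and~\ref{lem:b-b:x}. Your approach---the interpolation path $\Phi(\theta)$, the empirical-projection identity $\partial_{x^j}F = N^{-1}\partial_\mu f(\cdot,\mu^N,x^j)$, and the integral remainder $\int_0^1[\Phi'(\theta)-\Phi'(0)]\,d\theta$---is the standard and correct argument for such second-order Taylor estimates on $\mathcal{P}_2(\mathbb{R}^d)$, and it recovers the result.

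Your hedge on the polynomial exponent is warranted and worth making explicit. As stated, the lemma introduces a parameter $\xi$ in the $\partial_\mu f$-hypothesis that does not reappear in the conclusion. Your analysis of the cross term shows that Young's inequality applied to
\[
(1+|x^i|+|\bar{x}^i|)^{\xi+1}\,|x^i-\bar{x}^i|\cdot\tfrac{1}{N}\textstyle\sum_j|x^j-\bar{x}^j|
\]
naturally produces the exponent $2(\xi+1)$ on the $|x^i-\bar{x}^i|^2$ term, not $\chi$. In the paper's applications (e.g.\ to $\sigma^{(u,v)}$, where Assumptions~\ref{as:der:x:poly:lip} and~\ref{as:der:mea:poly:lip} give $\chi=\rho/4-1$ and $\xi+1=\rho/4$) the resulting exponent is still absorbed by the available moment bounds, so nothing downstream is affected; but the conclusion of the lemma should really be read with the exponent $\max\{\chi,\,2(\xi+1)\}$ rather than $\chi$ alone.
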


\begin{lem} \label{lem:sigma-sigma:milstein}
Let Assumptions \ref{as:x0}, \ref{as:coercivity}, \ref{as:monotonicity:rate}, \ref{as:polynomial:Lipschitz}, \ref{as:lipschitz}, \ref{as:der:x:poly:lip} and \ref{as:der:mea:poly:lip} be satisfied. Then, 
\begin{align*}
E|\sigma_{s}\big(X_s^{i,N,n},\mu_s^{X,N,n}\big)-\tilde{\sigma}_{\kappa_n(s)}^n\big(s,X_{\kappa_n(s)}^{i,N,n},\mu_{\kappa_n(s)}^{X,N,n}\big)\big|^p & \leq K n^{-p},
\\
E\big|\sigma^{0}_{s}\big(X_s^{i,N,n},\mu_s^{X,N,n}\big)-\tilde{\sigma}_{\kappa_n(s)}^{0,n}\big(s,X_{\kappa_n(s)}^{i,N,n},\mu_{\kappa_n(s)}^{X,N,n}\big)\big|^p & \leq K n^{-p},
\end{align*}
for any $p\leq p_0/(2\rho+4)$, $s\in[0,T]$, $i\in\{1,\ldots,N\}$  and $n,N\in\mathbb{N}$ where the constant $K>0$ does not depend on $n$ and $N$. 
\end{lem}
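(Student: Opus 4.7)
My plan is to bound the difference
\[
\sigma_s(X_s^{i,N,n},\mu_s^{X,N,n}) - \tilde{\sigma}_{\kappa_n(s)}^n(s,X_{\kappa_n(s)}^{i,N,n},\mu_{\kappa_n(s)}^{X,N,n})
\]
by splitting it into three natural pieces and showing that each is $O(n^{-1})$ in $L^p$. Recalling the definition $\tilde{\sigma}^n = \sigma^n + \Gamma^{n,\sigma}$, I would write
\begin{align*}
\sigma_s(X_s,\mu_s) - \tilde{\sigma}_{\kappa_n(s)}^n(s,X_{\kappa_n(s)},\mu_{\kappa_n(s)})
&= \underbrace{\bigl[\sigma_s(X_s,\mu_s)-\sigma_{\kappa_n(s)}(X_s,\mu_s)\bigr]}_{\text{(I) time regularity}}\\
&\quad+\underbrace{\bigl[\sigma_{\kappa_n(s)}(X_{\kappa_n(s)},\mu_{\kappa_n(s)})-\sigma^n_{\kappa_n(s)}(X_{\kappa_n(s)},\mu_{\kappa_n(s)})\bigr]}_{\text{(II) taming error}}\\
&\quad+\underbrace{\bigl[\sigma_{\kappa_n(s)}(X_s,\mu_s)-\sigma_{\kappa_n(s)}(X_{\kappa_n(s)},\mu_{\kappa_n(s)})-\Gamma_{\kappa_n(s)}^{n,\sigma}\bigr]}_{\text{(III) state/measure expansion}}
\end{align*}
(dropping particle superscripts for readability). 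Piece (I) is bounded by $K n^{-1}$ pointwise by Assumption \ref{as:lipschitz}. Piece (II) is bounded by $n^{-1} |x|^{\rho} |\sigma_{\kappa_n(s)}(x,\mu)|/(1+n^{-1}|x|^{\rho}) \le n^{-1}|x|^{\rho}|\sigma_{\kappa_n(s)}(x,\mu)|$; using the polynomial growth of $\sigma$ from Remark \ref{rem:poly:growth} and the moment bound from Lemma \ref{lem:mb:milstein}, this gives $E|\text{(II)}|^p\le Kn^{-p}$, provided $p(\rho+\rho/4+1)\le p_0$, which is ensured by $p\le p_0/(2\rho+4)$.

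The substantial work is in piece (III). I would apply Lemma \ref{lem:f:rate:local} componentwise to $f=\sigma^{(u,v)}_{\kappa_n(s)}$, whose derivatives satisfy polynomial Lipschitz conditions (Assumptions \ref{as:der:x:poly:lip}, \ref{as:der:mea:poly:lip}) with exponent $\chi=\rho/4$. This yields, up to a remainder controlled by $K(1+|X_{\kappa_n(s)}^{i,N,n}|)^{\rho/4}|X_s^{i,N,n}-X_{\kappa_n(s)}^{i,N,n}|^2+\frac{K}{N}\sum_j|X_s^{j,N,n}-X_{\kappa_n(s)}^{j,N,n}|^2$ (of order $n^{-1}$ in $L^p$ by Lemma \ref{lem:one-step:rate:milstein} and Hölder),
\begin{align*}
\sigma_{\kappa_n(s)}(X_s,\mu_s)-\sigma_{\kappa_n(s)}(X_{\kappa_n(s)},\mu_{\kappa_n(s)})
&\approx \partial_x\sigma_{\kappa_n(s)}\bigl(X_{\kappa_n(s)}^{i,N,n},\mu_{\kappa_n(s)}^{X,N,n}\bigr)\bigl(X_s^{i,N,n}-X_{\kappa_n(s)}^{i,N,n}\bigr)\\
&\quad+\frac{1}{N}\sum_{j=1}^N \partial_\mu\sigma_{\kappa_n(s)}\bigl(X_{\kappa_n(s)}^{i,N,n},\mu_{\kappa_n(s)}^{X,N,n},X_{\kappa_n(s)}^{j,N,n}\bigr)\bigl(X_s^{j,N,n}-X_{\kappa_n(s)}^{j,N,n}\bigr).
\end{align*}
Now I substitute the scheme increment $X_s^{j,N,n}-X_{\kappa_n(s)}^{j,N,n}=\int_{\kappa_n(s)}^s b^n_{\kappa_n(r)}\,dr+\int_{\kappa_n(s)}^s \tilde{\sigma}^n_{\kappa_n(r)}\,dW_r^{j}+\int_{\kappa_n(s)}^s \tilde{\sigma}^{0,n}_{\kappa_n(r)}\,dW_r^{0}$ and further split $\tilde{\sigma}^n=\sigma^n+\Gamma^{n,\sigma}$, $\tilde{\sigma}^{0,n}=\sigma^{0,n}+\Gamma^{n,\sigma^0}$. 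The $\sigma^n$ and $\sigma^{0,n}$ contributions reproduce exactly the four matrices $\Lambda^{n,\sigma\sigma}$, $\Lambda^{n,\sigma\sigma^0}$, $\bar\Lambda^{n,\sigma\sigma}$, $\bar\Lambda^{n,\sigma\sigma^0}$ summing to $\Gamma^{n,\sigma}_{\kappa_n(s)}$, producing the desired cancellation.

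What remains are three types of residuals: (a) $\partial_x\sigma\cdot\int b^n\,dr$ and its $\partial_\mu$ analogue, handled by $|\int_{\kappa_n(s)}^s b^n_{\kappa_n(r)}\,dr|\le K n^{-1}(1+|X_{\kappa_n(s)}|+\mathcal W_2)^{\rho/2+1}$ (Remark \ref{rem:poly:growth}) multiplied by $|\partial_x\sigma|\le K(1+|x|)^{\rho/4}$ (Remark \ref{rem:der:growth:poly}), and moment bounds; (b) iterated stochastic integrals $\partial_x\sigma\cdot\int\Gamma^{n,\sigma}\,dW^i$ and its variants, which by BDG and Lemma \ref{lem:gamma:rate} give another $n^{-1/2}\cdot n^{-1/2}=n^{-1}$; (c) the $\partial_\mu$ cross terms against the common noise $W^0$, handled identically. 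Combining (I)--(III) and all residuals proves the first inequality. The proof for $\sigma^0$ is entirely analogous, with $\sigma$ replaced by $\sigma^0$ and $\Gamma^{n,\sigma}$ by $\Gamma^{n,\sigma^0}$.

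The main obstacle is bookkeeping: carefully accounting for the polynomial factors in the state variable arising from the growth bounds in Remarks \ref{rem:poly:growth} and \ref{rem:der:growth:poly}, and checking at each step that the moment exponents are available under the restriction $p\le p_0/(2\rho+4)$. The worst case comes from terms like $E[|\partial_x\sigma|^p |\sigma^n|^p|W_s-W_{\kappa_n(s)}|^p]$ appearing in the expansion of $\Gamma$, which after Hölder requires moments up to order $p(\rho/2+2)$, precisely the admissible range.
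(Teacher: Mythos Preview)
Your proposal is correct and follows essentially the same approach as the paper: the same three-piece decomposition into time regularity, taming error, and state/measure expansion, the same use of Lemma~\ref{lem:f:rate:local} to extract the first-order Taylor expansion, the same cancellation of the $\sigma^n$- and $\sigma^{0,n}$-stochastic-integral contributions against $\Gamma^{n,\sigma}$, and the same residual estimates via Lemmas~\ref{lem:gamma:rate} and~\ref{lem:one-step:rate:milstein}. One minor correction to your closing paragraph: the binding moment constraint $p\le p_0/(2\rho+4)$ does not come from the $\partial_x\sigma\cdot\sigma^n$ terms (these cancel exactly), but from applying H\"older to the Taylor remainder, which requires $E|X_s^{i,N,n}-X_{\kappa_n(s)}^{i,N,n}|^{4p}$ and hence, via Lemma~\ref{lem:one-step:rate:milstein}, $4p\le p_0/(\rho/2+1)$.
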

\begin{proof} 
From equation \eqref{eq:milstein}, 
\begin{align} 
  \partial_x \sigma^{(u,v)}_{\kappa_n(s)}& \big(X_{\kappa_n(s)}^{i,N,n},\mu_{\kappa_n(s)}^{X,N,n}\big)\big( X_{s}^{i,N,n}-X_{\kappa_n(s)}^{i,N,n} \big) \notag
 \\
 =&  \partial_x \sigma^{(u,v)}_{\kappa_n(s)} \big(X_{\kappa_n(s)}^{i,N,n},\mu_{\kappa_n(s)}^{X,N,n}\big) \int_{\kappa_n(s)}^s b^n_{\kappa_n(r)}\big(X_{\kappa_n(r)}^{i,N, n}, \mu_{\kappa_n(r)}^{X,N,n}\big) dr  \notag
 \\
 & + \Lambda_{\kappa_n(s)}^{n, \sigma \sigma, (u, v)} \big(s,X_{\kappa_n(s)}^{i,N,n}, \mu_{\kappa_n(s)}^{X,N,n}\big) + \Lambda_{\kappa_n(s)}^{n, \sigma \sigma^0, (u, v)} \big(s,X_{\kappa_n(s)}^{i,N,n}, \mu_{\kappa_n(s)}^{X,N,n}\big)  \notag
 \\
 & +  \partial_x \sigma^{(u,v)}_{\kappa_n(s)} (X_{\kappa_n(s)}^{i,N,n},\mu_{\kappa_n(s)}^{X,N,n}) \int_{\kappa_n(s)}^s \Gamma_{\kappa_{n}(r)}^{n, \sigma} \big(r,X_{\kappa_n(r)}^{i,N,n}, \mu_{\kappa_n(r)}^{X,N,n}\big) dW_r^{i}   \notag
\\
& +  \partial_x \sigma^{(u,v)}_{\kappa_n(s)} (X_{\kappa_n(s)}^{i,N,n},\mu_{\kappa_n(s)}^{X,N,n}) \int_{\kappa_n(s)}^s \Gamma_{\kappa_{n}(r)}^{n, \sigma^0} \big(r,X_{\kappa_n(r)}^{i,N,n}, \mu_{\kappa_n(r)}^{X,N,n}\big)  dW^0_r \label{eq:sigma-sigma:1}
\end{align}
and also, 
\begin{align} 
 \frac{1}{N} \sum_{j=1}^N  & \partial_\mu \sigma^{(u,v)}_{\kappa_n(s)} \big(X_{\kappa_n(s)}^{i,N,n},\mu_{\kappa_n(s)}^{X,N,n}, X_{\kappa_n(s)}^{j,N,n} \big) \big(X_{s}^{j,N,n}-X_{\kappa_n(s)}^{j,N,n} \big) \notag
\\
 = & \frac{1}{N} \sum_{j=1}^N  \partial_\mu \sigma^{(u,v)}_{\kappa_n(s)} \big(X_{\kappa_n(s)}^{i,N,n},\mu_{\kappa_n(s)}^{X,N,n}, X_{\kappa_n(s)}^{j,N,n} \big) \int_{\kappa_n(s)}^s b^n_{\kappa_n(r)}\big(X_{\kappa_n(r)}^{j,N, n}, \mu_{\kappa_n(r)}^{X,N,n}\big) dr   \notag 
\\
& +  \bar{\Lambda}^{n,\sigma \sigma, (u, v)}_{\kappa_n(s)} \big(s,X_{\kappa_n(s)}^{i,N,n}, \mu_{\kappa_n(s)}^{X,N,n}\big) +  \bar{\Lambda}^{n,\sigma \sigma^0, (u, v)}_{\kappa_n(s)} \big(s,X_{\kappa_n(s)}^{i,N,n}, \mu_{\kappa_n(s)}^{X,N,n}\big) \notag 
\\
& +  \frac{1}{N} \sum_{j=1}^N  \partial_\mu \sigma^{(u,v)}_{\kappa_n(s)} \big(X_{\kappa_n(s)}^{i,N,n},\mu_{\kappa_n(s)}^{X,N,n}, X_{\kappa_n(s)}^{j,N,n} \big) \int_{\kappa_n(s)}^s \Gamma_{\kappa_n(r)}^{n, \sigma} \big(r, X_{\kappa_n(r)}^{j,N,n}, \mu_{\kappa_n(r)}^{X,N,n}\big) dW_r^{j}  \notag
\\
& + \frac{1}{N} \sum_{j=1}^N  \partial_\mu \sigma^{(u,v)}_{\kappa_n(s)} \big(X_{\kappa_n(s)}^{i,N,n},\mu_{\kappa_n(s)}^{X,N,n}, X_{\kappa_n(s)}^{j,N,n} \big)   \int_{\kappa_n(s)}^s \Gamma_{\kappa_n(r)}^{n, \sigma^0} \big(r, X_{\kappa_n(r)}^{j,N,n}, \mu_{\kappa_n(r)}^{X,N,n}\big)  dW_r^{0}  
 \label{eq:sigma-sigma:2}
\end{align}
almost surely for any $s\in[0,T]$, $i\in\{1,\ldots, N\}$ and $n,N\in\mathbb{N}$. Furthermore, 
\begin{align*} 
\sigma^{(u, v)}_{\kappa_n(s)}&\big(X_s^{i,N,n}, \mu_s^{X,N,n}\big)-\tilde{\sigma}^{n,(u,v)}_{\kappa_n(s)}\big(s,X_{\kappa_n(s)}^{i,N,n},\mu_{\kappa_n(s)}^{X,N,n}\big) = \sigma^{(u, v)}_{\kappa_n(s)}\big(X_s^{i,N,n},\mu_s^{X,N,n}\big)  
\\
 & -\sigma^{n,(u, v)}_{\kappa_n(s)}\big(X_{\kappa_n(s)}^{i,N,n},\mu_{\kappa_n(s)}^{X,N,n}\big) - \Lambda_{\kappa_{n}(s)}^{n, \sigma \sigma, (u,v)} \big(s,X_{\kappa_n(s)}^{i,N,n}, \mu_{\kappa_n(s)}^{X,N,n}\big) - \Lambda_{\kappa_{n}(s)}^{n, \sigma \sigma^0, (u,v)} \big(s,X_{\kappa_n(s)}^{i,N,n}, \mu_{\kappa_n(s)}^{X,N,n}\big)   
\\
&  - \bar{\Lambda}^{n,\sigma \sigma, (u,v)}_{\kappa_n(s)} \big( s,X_{\kappa_n(s)}^{i,N,n}, \mu_{\kappa_n(s)}^{X,N,n}\big)  - \bar{\Lambda}^{n,\sigma \sigma^0, (u,v)}_{\kappa_n(s)} \big( s,X_{\kappa_n(s)}^{i,N,n}, \mu_{\kappa_n(s)}^{X,N,n}\big)  
\\
& -   \partial_x \sigma^{(u,v)}_{\kappa_n(s)} (X_{\kappa_n(s)}^{i,N,n},\mu_{\kappa_n(s)}^{X,N,n}) \big( X_{s}^{i,N,n}-X_{\kappa_n(s)}^{i,N,n} \big) 
\\
&  - \frac{1}{N} \sum_{j=1}^N  \partial_\mu \sigma^{(u,v)}_{\kappa_n(s)} \big(X_{\kappa_n(s)}^{i,N,n},\mu_{\kappa_n(s)}^{X,N,n}, X_{\kappa_n(s)}^{j,N,n} \big)\big( X_{s}^{j,N,n}-X_{\kappa_n(s)}^{j,N,n} \big) 
\\
&  +  \partial_x \sigma^{(u,v)}_{\kappa_n(s)} \big(X_{\kappa_n(s)}^{i,N,n},\mu_{\kappa_n(s)}^{X,N,n}\big) \big( X_{s}^{i,N,n}-X_{\kappa_n(s)}^{i,N,n} \big) 
\\
&  + \frac{1}{N} \sum_{j=1}^N  \partial_\mu \sigma^{(u,v)}_{\kappa_n(s)} \big(X_{\kappa_n(s)}^{i,N,n},\mu_{\kappa_n(s)}^{X,N,n}, X_{\kappa_n(s)}^{j,N,n} \big) \big(X_{s}^{j,N,n}-X_{\kappa_n(s)}^{j,N,n} \big)   
\end{align*} 
which on using equations \eqref{eq:sigma-sigma:1} and \eqref{eq:sigma-sigma:2} yields, 
\begin{align*}
\sigma^{(u, v)}_{\kappa_n(s)} & \big(X_s^{i,N,n}, \mu_s^{X,N,n}\big)-\tilde{\sigma}^{n,(u,v)}_{\kappa_n(s)}\big(s,X_{\kappa_n(s)}^{i,N,n},\mu_{\kappa_n(s)}^{X,N,n}\big) = \sigma^{(u, v)}_{\kappa_n(s)}\big(X_s^{i,N,n},\mu_s^{X,N,n}\big)
\\
& -\sigma^{(u, v)}_{\kappa_n(s)}\big(X_{\kappa_n(s)}^{i,N,n},\mu_{\kappa_n(s)}^{X,N,n}\big) -   \partial_x \sigma^{(u,v)}_{\kappa_n(s)} (X_{\kappa_n(s)}^{i,N,n},\mu_{\kappa_n(s)}^{X,N,n}) \big( X_{s}^{i,N,n}-X_{\kappa_n(s)}^{i,N,n} \big) 
\\
& - \frac{1}{N} \sum_{j=1}^N  \partial_\mu \sigma^{(u,v)}_{\kappa_n(s)} \big(X_{\kappa_n(s)}^{i,N,n},\mu_{\kappa_n(s)}^{X,N,n}, X_{\kappa_n(s)}^{j,N,n} \big)\big( X_{s}^{j,N,n}-X_{\kappa_n(s)}^{j,N,n} \big) 
\\
&+ \sigma^{(u, v)}_{\kappa_n(s)}\big(X_{\kappa_n(s)}^{i,N,n},\mu_{\kappa_n(s)}^{X,N,n}\big)-\sigma^{n,(u, v)}_{\kappa_n(s)}\big(X_{\kappa_n(s)}^{i,N,n},\mu_{\kappa_n(s)}^{X,N,n}\big)
\\
& + \partial_x \sigma^{(u,v)}_{\kappa_n(s)} \big(X_{\kappa_n(s)}^{i,N,n},\mu_{\kappa_n(s)}^{X,N,n}\big) \int_{\kappa_n(s)}^s b^n_{\kappa_n(r)}\big(X_{\kappa_n(r)}^{i,N, n}, \mu_{\kappa_n(r)}^{X,N,n}\big) dr 
\\
 & +  \partial_x \sigma^{(u,v)}_{\kappa_n(s)} (X_{\kappa_n(s)}^{i,N,n},\mu_{\kappa_n(s)}^{X,N,n}) \int_{\kappa_n(s)}^s \Gamma_{\kappa_{n}(r)}^{n, \sigma} \big(r,X_{\kappa_n(r)}^{i,N,n}, \mu_{\kappa_n(r)}^{X,N,n}\big) dW_r^{i}   
\\
& +  \partial_x \sigma^{(u,v)}_{\kappa_n(s)} (X_{\kappa_n(s)}^{i,N,n},\mu_{\kappa_n(s)}^{X,N,n}) \int_{\kappa_n(s)}^s \Gamma_{\kappa_{n}(r)}^{n, \sigma^0} \big(r,X_{\kappa_n(r)}^{i,N,n}, \mu_{\kappa_n(r)}^{X,N,n}\big)  dW^0_r
\\
& +\frac{1}{N} \sum_{j=1}^N  \partial_\mu \sigma^{(u,v)}_{\kappa_n(s)} \big(X_{\kappa_n(s)}^{i,N,n},\mu_{\kappa_n(s)}^{X,N,n}, X_{\kappa_n(s)}^{j,N,n} \big) \int_{\kappa_n(s)}^s b^n_{\kappa_n(r)}\big(X_{\kappa_n(r)}^{j,N, n}, \mu_{\kappa_n(r)}^{X,N,n}\big) dr 
\\
& +  \frac{1}{N} \sum_{j=1}^N  \partial_\mu \sigma^{(u,v)}_{\kappa_n(s)} \big(X_{\kappa_n(s)}^{i,N,n},\mu_{\kappa_n(s)}^{X,N,n}, X_{\kappa_n(s)}^{j,N,n} \big) \int_{\kappa_n(s)}^s \Gamma_{\kappa_n(r)}^{n, \sigma} \big(r, X_{\kappa_n(r)}^{j,N,n}, \mu_{\kappa_n(r)}^{X,N,n}\big) dW_r^{j}  
\\
& + \frac{1}{N} \sum_{j=1}^N  \partial_\mu \sigma^{(u,v)}_{\kappa_n(s)} \big(X_{\kappa_n(s)}^{i,N,n},\mu_{\kappa_n(s)}^{X,N,n}, X_{\kappa_n(s)}^{j,N,n} \big)   \int_{\kappa_n(s)}^s \Gamma_{\kappa_n(r)}^{n, \sigma^0} \big(r, X_{\kappa_n(r)}^{j,N,n}, \mu_{\kappa_n(r)}^{X,N,n}\big)  dW_r^{0}  
\end{align*}
almost surely for any $s\in[0,T]$, $i\in\{1,\ldots, N\}$ and $n,N\in \mathbb{N}$. On using Lemma \ref{lem:f:rate:local} with  $f=\sigma_{\kappa_n(s)}$ and Remark \ref{rem:poly:lipschitz}, one obtains, 
\begin{align*}
E\big|\sigma^{(u, v)}_{\kappa_n(s)} & \big(X_s^{i,N,n}, \mu_s^{X,N,n}\big)-\tilde{\sigma}^{n,(u,v)}_{\kappa_n(s)}\big(s,X_{\kappa_n(s)}^{i,N,n},\mu_{\kappa_n(s)}^{X,N,n}\big) \big|^p 
\\
 \leq & K E\big(1+\big|X_s^{i,N,n}\big|+ \big|X_{\kappa_n(s)}^{i,N,n}\big|\big)^{\rho p/4} \big|X_s^{i,N,n}-X_{\kappa_n(s)}^{i,N,n}\big|^{2p} +K \frac{1}{N} \sum_{j=1}^N E\big|X_s^{j,N,n}-X_{\kappa_n(s)}^{j,N,n}\big|^{2p}
\\
& + K E\big| \sigma^{(u, v)}_{\kappa_n(s)}\big(X_{\kappa_n(s)}^{i,N,n},\mu_{\kappa_n(s)}^{X,N,n}\big)-\sigma^{n,(u, v)}_{\kappa_n(s)}\big(X_{\kappa_n(s)}^{i,N,n},\mu_{\kappa_n(s)}^{X,N,n}\big)\big|^p 
\\
& + K E \Big|\partial_x \sigma^{(u,v)}_{\kappa_n(s)} \big(X_{\kappa_n(s)}^{i,N,n},\mu_{\kappa_n(s)}^{X,N,n}\big) \int_{\kappa_n(s)}^s b^n_{\kappa_n(r)}\big(X_{\kappa_n(r)}^{i,N, n}, \mu_{\kappa_n(r)}^{X,N,n}\big) dr\Big|^p  
\\
 & + K E \Big| \partial_x \sigma^{(u,v)}_{\kappa_n(s)} (X_{\kappa_n(s)}^{i,N,n},\mu_{\kappa_n(s)}^{X,N,n}) \int_{\kappa_n(s)}^s \Gamma_{\kappa_{n}(r)}^{n, \sigma} \big(r,X_{\kappa_n(r)}^{i,N,n}, \mu_{\kappa_n(r)}^{X,N,n}\big) dW_r^{i}  \Big|^p  
\\
& +  K E \Big|\partial_x \sigma^{(u,v)}_{\kappa_n(s)} (X_{\kappa_n(s)}^{i,N,n},\mu_{\kappa_n(s)}^{X,N,n}) \int_{\kappa_n(s)}^s \Gamma_{\kappa_{n}(r)}^{n, \sigma^0} \big(r,X_{\kappa_n(r)}^{i,N,n}, \mu_{\kappa_n(r)}^{X,N,n}\big)  dW^0_r\Big|^p
\\
& + K E \Big|\frac{1}{N} \sum_{j=1}^N  \partial_\mu \sigma^{(u,v)}_{\kappa_n(s)} \big(X_{\kappa_n(s)}^{i,N,n},\mu_{\kappa_n(s)}^{X,N,n}, X_{\kappa_n(s)}^{j,N,n} \big) \int_{\kappa_n(s)}^s b^n_{\kappa_n(r)}\big(X_{\kappa_n(r)}^{j,N, n}, \mu_{\kappa_n(r)}^{X,N,n}\big) dr \Big|^p
\\
& +  K E \Big|\frac{1}{N} \sum_{j=1}^N  \partial_\mu \sigma^{(u,v)}_{\kappa_n(s)} \big(X_{\kappa_n(s)}^{i,N,n},\mu_{\kappa_n(s)}^{X,N,n}, X_{\kappa_n(s)}^{j,N,n} \big) \int_{\kappa_n(s)}^s \Gamma_{\kappa_n(r)}^{n, \sigma} \big(r, X_{\kappa_n(r)}^{j,N,n}, \mu_{\kappa_n(r)}^{X,N,n}\big) dW_r^{j} \Big|^p  
\\
& + K E \Big|\frac{1}{N} \sum_{j=1}^N  \partial_\mu \sigma^{(u,v)}_{\kappa_n(s)} \big(X_{\kappa_n(s)}^{i,N,n},\mu_{\kappa_n(s)}^{X,N,n}, X_{\kappa_n(s)}^{j,N,n} \big)   \int_{\kappa_n(s)}^s \Gamma_{\kappa_n(r)}^{n, \sigma^0} \big(r, X_{\kappa_n(r)}^{j,N,n}, \mu_{\kappa_n(r)}^{X,N,n}\big)  dW_r^{0}  \Big|^p
\end{align*}
for any $s\in[0,T]$, $i\in\{1,\ldots, N\}$ and $n, N\in\mathbb{N}$. Notice that due to equation \eqref{eq:Taming:milstein}, Remark \ref{rem:poly:growth}, equation \eqref{eq:w2:mb} and Lemma \ref{lem:mb:milstein}, 
\begin{align}
E\Big|\sigma_{\kappa_n(s)}^{(u,v)} \big(X_{\kappa_n(s)}^{i,N,n}, \mu_{\kappa_n(s)}^{X,N,n}\big)- \frac{\sigma_{\kappa_n(s)}^{(u,v)} \big(X_{\kappa_n(s)}^{i,N,n}, \mu_{\kappa_n(s)}^{X,N,n}\big)}{1+n^{-1}\big|X_{\kappa_n(s)}^{i,N,n}\big|^{\rho}} \Big|^p& \leq K n^{-p} E \big|\sigma_{\kappa_n(s)}^{(u,v)} \big(X_{\kappa_n(s)}^{i,N,n}, \mu_{\kappa_n(s)}^{X,N,n}\big)\big|^p \big|X_{\kappa_n(s)}^{i,N,n}\big|^{\rho p} \notag
\\
& \leq K n^{-p} \label{eq:sig-sign}
\end{align}
for any $s\in[0,T]$, $i\in\{1,\ldots, N\}$ and $n, N\in\mathbb{N}$.  Furthermore, one uses H\"older's inequality, equation \eqref{eq:sig-sign}, Lemmas \ref{lem:one-step:rate:milstein}, \ref{lem:mb:milstein} and Remarks \ref{rem:poly:growth}, \ref{rem:der:growth:poly} to obtain, 
\begin{align*}
E\big|\sigma^{(u, v)}_{\kappa_n(s)} & \big(X_s^{i,N,n}, \mu_s^{X,N,n}\big)-\tilde{\sigma}^{n,(u,v)}_{\kappa_n(s)}\big(s,X_{\kappa_n(s)}^{i,N,n},\mu_{\kappa_n(s)}^{X,N,n}\big) \big|^p 
\\
 \leq & K \big\{E\big(1+\big|X_s^{i,N,n}\big|+ \big|X_{\kappa_n(s)}^{i,N,n}\big|\big)^{\rho p/2} \big\}^{1/2} \big\{E\big|X_s^{i,N,n}-X_{\kappa_n(s)}^{i,N,n}\big|^{4p}\big\}^{1/2} +K n^{-p}
\\
& + K n^{-p} E \big|\partial_x \sigma^{(u,v)}_{\kappa_n(s)} \big(X_{\kappa_n(s)}^{i,N,n},\mu_{\kappa_n(s)}^{X,N,n}\big)\big|^p \big| b^n_{\kappa_n(s)}\big(X_{\kappa_n(s)}^{i,N, n}, \mu_{\kappa_n(s)}^{X,N,n}\big) \big|^p  
\\
 & + K n^{-p/2+1}E  \int_{\kappa_n(s)}^s \big| \partial_x \sigma^{(u,v)}_{\kappa_n(s)} (X_{\kappa_n(s)}^{i,N,n},\mu_{\kappa_n(s)}^{X,N,n}) \big|^p \big| \Gamma_{\kappa_{n}(r)}^{n, \sigma} \big(r,X_{\kappa_n(r)}^{i,N,n}, \mu_{\kappa_n(r)}^{X,N,n}\big)\big|^p dr  
\\
& +  K n^{-p/2+1} E  \int_{\kappa_n(s)}^s  \big|\partial_x \sigma^{(u,v)}_{\kappa_n(s)} (X_{\kappa_n(s)}^{i,N,n},\mu_{\kappa_n(s)}^{X,N,n}) \big|^p \big| \Gamma_{\kappa_{n}(r)}^{n, \sigma^0} \big(r,X_{\kappa_n(r)}^{i,N,n}, \mu_{\kappa_n(r)}^{X,N,n}\big)\big|^p  dr 
\\
& + K n^{-p} \frac{1}{N} \sum_{j=1}^N   E \big|\partial_\mu \sigma^{(u,v)}_{\kappa_n(s)} \big(X_{\kappa_n(s)}^{i,N,n},\mu_{\kappa_n(s)}^{X,N,n}, X_{\kappa_n(s)}^{j,N,n} \big)\big|^p \big|b^n_{\kappa_n(s)}\big(X_{\kappa_n(s)}^{j,N, n}, \mu_{\kappa_n(s)}^{X,N,n}\big)\big|^p 
\\
& +  K n^{-p/2+1}  \frac{1}{N} \sum_{j=1}^N E \int_{\kappa_n(s)}^s \big| \partial_\mu \sigma^{(u,v)}_{\kappa_n(s)} \big(X_{\kappa_n(s)}^{i,N,n},\mu_{\kappa_n(s)}^{X,N,n}, X_{\kappa_n(s)}^{j,N,n} \big) \big|^p \big|\Gamma_{\kappa_n(r)}^{n, \sigma} \big(r, X_{\kappa_n(r)}^{j,N,n}, \mu_{\kappa_n(r)}^{X,N,n}\big) \big|^p dr  
\\
& + Kn^{-p/2+1}  \frac{1}{N} \sum_{j=1}^N  E    \int_{\kappa_n(s)}^s \big| \partial_\mu \sigma^{(u,v)}_{\kappa_n(s)} \big(X_{\kappa_n(s)}^{i,N,n},\mu_{\kappa_n(s)}^{X,N,n}, X_{\kappa_n(s)}^{j,N,n} \big) \big|^p \big|\Gamma_{\kappa_n(r)}^{n, \sigma^0} \big(r, X_{\kappa_n(r)}^{j,N,n}, \mu_{\kappa_n(r)}^{X,N,n}\big)\big|^p   dr 
\\
 \leq & K n^{-p}+ K  n^{-p/2+1}  \int_{\kappa_n(s)}^s \big\{E \big| \partial_x \sigma^{(u,v)}_{\kappa_n(s)} (X_{\kappa_n(s)}^{i,N,n},\mu_{\kappa_n(s)}^{X,N,n}) \big|^{2p}\big\}^{1/2} \big\{E\big| \Gamma_{\kappa_{n}(r)}^{n, \sigma} \big(r,X_{\kappa_n(r)}^{i,N,n}, \mu_{\kappa_n(r)}^{X,N,n}\big)\big|^{2p}\big\}^{1/2} dr
 \\
 &+ K  n^{-p/2+1}  \int_{\kappa_n(s)}^s \big\{ E\big|\partial_x \sigma^{(u,v)}_{\kappa_n(s)} (X_{\kappa_n(s)}^{i,N,n},\mu_{\kappa_n(s)}^{X,N,n}) \big|^{2p} \big\}^{1/2} \big\{E\big| \Gamma_{\kappa_{n}(r)}^{n, \sigma^0} \big(r,X_{\kappa_n(r)}^{i,N,n}, \mu_{\kappa_n(r)}^{X,N,n}\big)\big|^{2p}\big\}^{1/2}  dr   
 \\
 & +  K n^{-p/2+1}  \frac{1}{N} \sum_{j=1}^N  \int_{\kappa_n(s)}^s  \big\{E \big| \partial_\mu \sigma^{(u,v)}_{\kappa_n(s)} \big(X_{\kappa_n(s)}^{i,N,n},\mu_{\kappa_n(s)}^{X,N,n}, X_{\kappa_n(s)}^{j,N,n} \big) \big|^{2p}\big\}^{1/2} 
 \\
 & \qquad \qquad \qquad \times \big\{ E\big|\Gamma_{\kappa_n(r)}^{n, \sigma} \big(r, X_{\kappa_n(r)}^{j,N,n}, \mu_{\kappa_n(r)}^{X,N,n}\big) \big|^{2p} \big\}^\frac{1}{2} dr  
\\
& + K n^{-p/2+1} \frac{1}{N} \sum_{j=1}^N      \int_{\kappa_n(s)}^s \big\{E\big| \partial_\mu \sigma^{(u,v)}_{\kappa_n(s)} \big(X_{\kappa_n(s)}^{i,N,n},\mu_{\kappa_n(s)}^{X,N,n}, X_{\kappa_n(s)}^{j,N,n} \big) \big|^{2p} \big\}^\frac{1}{2} 
\\
& \qquad \qquad \qquad \times \big\{E\big|\Gamma_{\kappa_n(r)}^{n, \sigma^0} \big(r, X_{\kappa_n(r)}^{j,N,n}, \mu_{\kappa_n(r)}^{X,N,n}\big)\big|^{2p} \big\}^\frac{1}{2}   dr 
\end{align*}
for any $s\in[0,T]$, $i\in\{1,\ldots, N\}$ and $n, N\in\mathbb{N}$.  On using  Lemma \ref{lem:gamma:rate}, Remarks \ref{rem:der:growth:poly} and equation \eqref{eq:w2:mb},  one obtains
\begin{align*}
E\big|\sigma_{\kappa_n(s)} & \big(X_s^{i,N,n}, \mu_s^{X,N,n}\big)-\tilde{\sigma}^{n}_{\kappa_n(s)}\big(s,X_{\kappa_n(s)}^{i,N,n},\mu_{\kappa_n(s)}^{X,N,n}\big) \big|^p \leq K n^{-p},
\end{align*}
for any $s\in[0,T]$, $i\in\{1,\ldots, N\}$ and $n, N\in\mathbb{N}$. Furthermore, the application of Assumption \ref{as:lipschitz} yields, 
\begin{align*}
 K E\big|\sigma_{s}  \big(X_s^{i,N,n}, \mu_s^{X,N,n}\big)-\sigma_{\kappa_n(s)}  \big(X_s^{i,N,n}, \mu_s^{X,N,n}\big) \big|^p \leq  K n^{-p},
\end{align*} 
for any $s\in[0,T]$, $i\in\{1,\ldots, N\}$ and $n, N\in\mathbb{N}$. Also, similar calculations give the second inequality.  
\end{proof}
\begin{lem} \label{lem:b-b}
Let Assumptions \ref{as:x0}, \ref{as:coercivity}, \ref{as:monotonicity:rate}, \ref{as:polynomial:Lipschitz}, \ref{as:lipschitz}, \ref{as:der:x:poly:lip} and \ref{as:der:mea:poly:lip} be satisfied. Then, 
\begin{align*}
E\big|b_s(X_s^{i,N,n},\mu_s^{X,N,n})-b_{\kappa_n(s)}^n(X_{\kappa_n(s)}^{i,N,n},\mu_{\kappa_n(s)}^{X,N,n})|^p \leq K n^{-p/2},
\end{align*}
for any $p\leq p_0/(2\rho+4)$, $s\in[0,T]$, $i\in\{1,\ldots,N\}$ and $n,N\in\mathbb{N}$ where  constant $K>0$ does not depend on $n$ and $N$.  
\end{lem}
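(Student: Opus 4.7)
The strategy is to mimic Lemma \ref{lem:euler:rate:coeff} of the Euler section, splitting the difference into three pieces via the triangle inequality:
\begin{align*}
b_s\bigl(X_s^{i,N,n},\mu_s^{X,N,n}\bigr) - b_{\kappa_n(s)}^n\bigl(X_{\kappa_n(s)}^{i,N,n},\mu_{\kappa_n(s)}^{X,N,n}\bigr)
&= \bigl[b_s(X_s^{i,N,n},\mu_s^{X,N,n}) - b_s(X_{\kappa_n(s)}^{i,N,n},\mu_{\kappa_n(s)}^{X,N,n})\bigr] \\
&\quad + \bigl[b_s(X_{\kappa_n(s)}^{i,N,n},\mu_{\kappa_n(s)}^{X,N,n}) - b_{\kappa_n(s)}(X_{\kappa_n(s)}^{i,N,n},\mu_{\kappa_n(s)}^{X,N,n})\bigr] \\
&\quad + \bigl[b_{\kappa_n(s)}(X_{\kappa_n(s)}^{i,N,n},\mu_{\kappa_n(s)}^{X,N,n}) - b_{\kappa_n(s)}^n(X_{\kappa_n(s)}^{i,N,n},\mu_{\kappa_n(s)}^{X,N,n})\bigr],
\end{align*}
and control each one separately. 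The middle difference, involving only a shift in the time argument, is bounded by $L|s-\kappa_n(s)|^p \leq K n^{-p}$ directly from Assumption \ref{as:lipschitz}. The third difference, measuring the effect of taming, satisfies
\[
\bigl|b_{\kappa_n(s)} - b_{\kappa_n(s)}^n\bigr|(x,\mu) \;\leq\; \frac{n^{-1}|x|^\rho}{1+n^{-1}|x|^\rho}\,|b_{\kappa_n(s)}(x,\mu)| \;\leq\; K\,n^{-1}|x|^\rho\bigl|b_{\kappa_n(s)}(x,\mu)\bigr|,
\]
so raising to the $p$-th power and using Remark \ref{rem:poly:growth}, Cauchy--Schwarz, the moment bound of Lemma \ref{lem:mb:milstein} and equation \eqref{eq:w2:mb} yields $K n^{-p}$, provided the exponent $\rho p + (\rho/2+1)p \leq p_0$ is admissible; one checks that $p\leq p_0/(2\rho+4)$ is exactly tailored for this.

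The leading term is the first one, which dictates the $n^{-p/2}$ rate. By Assumption \ref{as:polynomial:Lipschitz},
\[
\bigl|b_s(X_s^{i,N,n},\mu_s^{X,N,n}) - b_s(X_{\kappa_n(s)}^{i,N,n},\mu_{\kappa_n(s)}^{X,N,n})\bigr|^p \;\leq\; K\bigl(1+|X_s^{i,N,n}|+|X_{\kappa_n(s)}^{i,N,n}|\bigr)^{\rho p/2}\bigl|X_s^{i,N,n}-X_{\kappa_n(s)}^{i,N,n}\bigr|^p + K\,\mathcal{W}_2^p\bigl(\mu_s^{X,N,n},\mu_{\kappa_n(s)}^{X,N,n}\bigr).
\]
For the polynomial--Lipschitz piece I apply Cauchy--Schwarz to separate the polynomial weight from the displacement, bounding the first factor by $E(1+|X_s^{i,N,n}|+|X_{\kappa_n(s)}^{i,N,n}|)^{\rho p} \leq K$ via Lemma \ref{lem:mb:milstein} (this requires $\rho p \leq p_0$, guaranteed by $p\leq p_0/(2\rho+4)$), and the second factor by Lemma \ref{lem:one-step:rate:milstein}, which gives $E|X_s^{i,N,n}-X_{\kappa_n(s)}^{i,N,n}|^{2p}\leq K n^{-p}$ provided $2p\leq p_0/(\rho/2+1)$, again ensured by $p\leq p_0/(2\rho+4)$. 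The Wasserstein piece is handled via the standard empirical estimate analogous to \eqref{eq:w2:scheme}, $\mathcal{W}_2^2(\mu_s^{X,N,n},\mu_{\kappa_n(s)}^{X,N,n})\leq \frac{1}{N}\sum_{j=1}^N|X_s^{j,N,n}-X_{\kappa_n(s)}^{j,N,n}|^2$, after which Lemma \ref{lem:one-step:rate:milstein} again delivers $K n^{-p/2}$.

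The only delicate point is bookkeeping of the integrability exponents: one must verify that every Hölder/Cauchy--Schwarz split produces factors whose moments are controlled by $p_0$ (through Lemma \ref{lem:mb:milstein}) and by $p_0/(\rho/2+1)$ (through Lemma \ref{lem:one-step:rate:milstein}). The restriction $p\leq p_0/(2\rho+4)$ in the statement is precisely the worst of these constraints after Cauchy--Schwarz is applied to $(1+|x|+|y|)^{\rho p/2}|x-y|^p$. Collecting the three estimates $K n^{-p/2}$, $K n^{-p}$ and $K n^{-p}$ gives the desired overall $K n^{-p/2}$ bound.
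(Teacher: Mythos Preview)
Your proof is correct and follows essentially the same route as the paper: the same three-term decomposition (polynomial Lipschitz in state/measure, Lipschitz in time, taming error), the same use of Assumption~\ref{as:polynomial:Lipschitz}, Assumption~\ref{as:lipschitz}, Remark~\ref{rem:poly:growth}, H\"older/Cauchy--Schwarz, the empirical Wasserstein bound \eqref{eq:w2:scheme}, and Lemmas~\ref{lem:mb:milstein} and~\ref{lem:one-step:rate:milstein}. The only cosmetic difference is that the paper performs the time shift before the state shift (i.e.\ it writes $b_s-b_{\kappa_n(s)}$ at $(X_s^{i,N,n},\mu_s^{X,N,n})$ first, then the polynomial Lipschitz step at time $\kappa_n(s)$), whereas you do the state shift first; since all the assumptions are uniform in $t$, this ordering is immaterial.
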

\begin{proof}
On using Assumptions \ref{as:polynomial:Lipschitz}, \ref{as:lipschitz}, equation \eqref{eq:Taming:milstein}, H\"older's inequality, equation \eqref{eq:w2:scheme} and Remark \ref{rem:poly:growth}, one obtains
\begin{align*}
E\big|b_{s}\big( X_s^{i,N,n},& \mu_s^{X,N,n}\big)  -b^n_{\kappa_n(s)}\big(X_{\kappa_n(s)}^{i,N,n},\mu_{\kappa_n(s)}^{X,N,n}\big)\big|^p \leq K E\big|b_{s}\big( X_s^{i,N,n}, \mu_s^{X,N,n}\big)-b_{\kappa_n(s)}\big(X_s^{i,N,n},\mu_s^{X,N,n}\big)\big|^p
\\
& + KE \big|b_{\kappa_n(s)}\big(X_s^{i,N,n},\mu_s^{X,N,n}\big)-b_{\kappa_n(s)}\big(X_{\kappa_n(s)}^{i,N,n},\mu_{\kappa_n(s)}^{X,N,n}\big)\big|^p
\\
& + K E\big|b_{\kappa_n(s)}\big(X_{\kappa_n(s)}^{i,N,n},\mu_{\kappa_n(s)}^{X,N,n}\big)-b_{\kappa_n(s)}^n\big(X_{\kappa_n(s)}^{i,N,n},\mu_{\kappa_n(s)}^{X,N,n}\big)\big|^p
\\
\leq & K n^{-p}+ K E\big(1+\big| X_s^{i,N,n} \big|+\big|X_{\kappa_n(s)}^{i,N,n} \big|\big)^{\rho p/2} \big|X_s^{i,N,n}- X_{\kappa_n(s)}^{i,N,n}\big|^p + K E\mathcal{W}_2^p\big(\mu_s^{X,N,n}, \mu_{\kappa_n(s)}^{X,N,n}\big)
\\
&+K  E\Big|b_{\kappa_n(s)}\big(X_{\kappa_n(s)}^{i,N,n},\mu_{\kappa_n(s)}^{X,N,n}\big)-\frac{b_{\kappa_n(s)} \big(X_{\kappa_n(s)}^{i,N, n}, \mu_{\kappa_n(s)}^{X,N,n}\big)}{1+ n^{-1} \big|X_{\kappa_n(s)}^{i,N,n}\big|^{\rho}} \Big|^p
\\
\leq & K n^{-p}+ K \big\{E\big(1+\big| X_s^{i,N,n} \big|+\big|X_{\kappa_n(s)}^{i,N,n} \big|\big)^{\rho p} E\big|X_s^{i,N,n}- X_{\kappa_n(s)}^{i,N,n}\big|^{2p}\big\}^{1/2} 
\\
& + \frac{1}{N}\sum_{j=1}^n E\big|X_s^{i,N,n}-X_{\kappa_n(s)}^{i,N,n}\big|^p + n^{-p} E\big(1+\big|X_{\kappa_n(s)}^{i,N,n}\big|^{3\rho p/2}\big)
\end{align*} 
and then the application of Lemmas \ref{lem:mb:milstein} and \ref{lem:one-step:rate:milstein}  completes the proof. 
\end{proof}
\begin{lem} \label{lem:b-b:x}
Let Assumptions \ref{as:x0}, \ref{as:coercivity}, \ref{as:monotonicity:rate}, \ref{as:polynomial:Lipschitz}, \ref{as:lipschitz}, \ref{as:der:x:poly:lip} and \ref{as:der:mea:poly:lip} be satisfied. Then, 
\begin{align*} 
E \big|X_s^{i,N}  - X_{s}^{i,N,n}\big|^{p-2} \big(X_s^{i,N}  & - X_{s}^{i,N,n}\big)  \big(b_{s}\big(X_s^{i,N,n},\mu_s^{X,N,n}\big)-b_{\kappa_n(s)}^n\big(X_{\kappa_n(s)}^{i,N,n},\mu_{\kappa_n(s)}^{X,N,n}\big) \big)
\\
&   \leq K n^{-p}  + K \sup_{i\in\{1,\cdots,N\}}\sup_{r\in[0,s]} E\big|X_r^{i,N}-X_r^{i,N,n}\big|^p, 
\end{align*}
for any $p\leq p_0/(2\rho+4)$, $s\in[0,T]$, $i\in\{1,\ldots,N\}$  and $n,N\in\mathbb{N}$ where  constant $K>0$ does not depend on $n$ and $N$.  
\end{lem}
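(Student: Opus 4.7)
The plan is to decompose $b_s(X_s^{i,N,n},\mu_s^{X,N,n})-b_{\kappa_n(s)}^n(X_{\kappa_n(s)}^{i,N,n},\mu_{\kappa_n(s)}^{X,N,n})$ via a first-order Taylor expansion in $(x,\mu)$ about $(X_{\kappa_n(s)}^{i,N,n},\mu_{\kappa_n(s)}^{X,N,n})$, isolate the increment $X_s^{i,N,n}-X_{\kappa_n(s)}^{i,N,n}$, substitute the Milstein SDE \eqref{eq:milstein} for this increment, and then exploit the $\mathcal{F}_{\kappa_n(s)}$-measurability of the frozen coefficients together with the martingale property of the stochastic integrals over $[\kappa_n(s),s]$ to kill the leading $O(n^{-1/2})$ piece inside the expectation. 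This is precisely the mechanism that improves the naive Cauchy--Schwarz rate $n^{-p/2}$ (available directly from Lemma~\ref{lem:b-b}) to the Milstein rate $n^{-p}$.

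\textbf{Step 1 (Decomposition).} Write the integrand as the sum of a time-increment term $b_s-b_{\kappa_n(s)}$ at the frozen point (controlled by $n^{-1}$ via Assumption~\ref{as:lipschitz}), a taming error $b_{\kappa_n(s)}-b_{\kappa_n(s)}^n$ at the frozen point (of order $n^{-1}$ by \eqref{eq:Taming:milstein} and Lemma~\ref{lem:mb:milstein}), and a spatial increment $b_{\kappa_n(s)}(X_s^{i,N,n},\mu_s^{X,N,n})-b_{\kappa_n(s)}(X_{\kappa_n(s)}^{i,N,n},\mu_{\kappa_n(s)}^{X,N,n})$. For the last, apply Lemma~\ref{lem:f:rate:local} with $f=b_{\kappa_n(s)}$: the linearisation yields
$\partial_x b_{\kappa_n(s)}(X_{\kappa_n(s)}^{i,N,n},\mu_{\kappa_n(s)}^{X,N,n})(X_s^{i,N,n}-X_{\kappa_n(s)}^{i,N,n})$
plus the empirical $\partial_\mu$ sum over $j$, plus a quadratic remainder $R^i_s$ which by Lemma~\ref{lem:one-step:rate:milstein} satisfies $E|R^i_s|^p\le K n^{-p}$ (using also the moment bound of Lemma~\ref{lem:mb:milstein} and H\"older with exponents consistent with $p\le p_0/(2\rho+4)$).

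\textbf{Step 2 (Substitute Milstein increment).} Replace every $X_s^{j,N,n}-X_{\kappa_n(s)}^{j,N,n}$ appearing in the linearised terms by its Milstein representation \eqref{eq:milstein}. The Bochner integral $\int_{\kappa_n(s)}^s b^n_{\kappa_n(r)}(\cdot)\,dr$ is deterministically of size $n^{-1}$ (by Remark~\ref{rem:growth:taming:milstein}), so after Cauchy--Schwarz in $|X_s^{i,N}-X_s^{i,N,n}|^{p-1}$ and Young's inequality, it contributes $K n^{-p}+K\sup_{r\le s}E|X_r^{i,N}-X_r^{i,N,n}|^p$. The genuinely delicate contributions are the stochastic integrals $\int_{\kappa_n(s)}^s \tilde\sigma^n_{\kappa_n(r)}(r,\cdot)\,dW_r^{j}$ and $\int_{\kappa_n(s)}^s \tilde\sigma^{0,n}_{\kappa_n(r)}(r,\cdot)\,dW_r^{0}$.

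\textbf{Step 3 (Martingale cancellation).} For these stochastic-integral contributions write
\[
X_s^{i,N}-X_s^{i,N,n}=\bigl(X_{\kappa_n(s)}^{i,N}-X_{\kappa_n(s)}^{i,N,n}\bigr)+\bigl[(X_s^{i,N}-X_s^{i,N,n})-(X_{\kappa_n(s)}^{i,N}-X_{\kappa_n(s)}^{i,N,n})\bigr],
\]
and factor out the same decomposition from $|X_s^{i,N}-X_s^{i,N,n}|^{p-2}$ at the $\mathcal{F}_{\kappa_n(s)}$-measurable value (with a Taylor remainder via inequality \eqref{eq:mvt:mb}). The $\mathcal{F}_{\kappa_n(s)}$-measurable factor multiplied by a stochastic integral against $W^i$, $W^j$ or $W^0$ is a mean-zero martingale increment, so its expectation vanishes. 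The residual terms involve the correction $(X_s^{i,N}-X_{\kappa_n(s)}^{i,N})-(X_s^{i,N,n}-X_{\kappa_n(s)}^{i,N,n})$, which is itself an SDE/Milstein-scheme one-step increment of order $n^{-1/2}$ in $L^p$ by the standard one-step estimates (Lemma~\ref{lem:one-step:rate:milstein} together with the analogous classical estimate for $X^{i,N}$); paired with the stochastic integral, also of order $n^{-1/2}$ by Burkholder--Davis--Gundy and Corollary~\ref{cor:sigma:rate}, this combination yields $n^{-1}$ per factor and hence $n^{-p}$ after raising to the $p$-th power and applying Young's inequality.

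\textbf{Main obstacle.} The hardest step is Step~3: the factor $|X_s^{i,N}-X_s^{i,N,n}|^{p-2}(X_s^{i,N}-X_s^{i,N,n})$ is not $\mathcal{F}_{\kappa_n(s)}$-measurable, and the martingale cancellation only works after carefully expanding it around the time $\kappa_n(s)$; one must control the Taylor remainder of the map $y\mapsto|y|^{p-2}y$ in $L^p$ using H\"older together with the moment bounds of Lemma~\ref{lem:mb:milstein} and the uniform $L^p$ moment bounds on the true particle system \eqref{eq:interacting}, while keeping the polynomial weights under the threshold $p_0/(2\rho+4)$ imposed on $p$. Once this is done, all stochastic-integral contributions collapse to $Kn^{-p}+K\sup_{r\le s}E|X_r^{i,N}-X_r^{i,N,n}|^p$, as required.
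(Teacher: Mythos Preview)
Your overall architecture---Taylor expand via Lemma~\ref{lem:f:rate:local}, substitute the Milstein increment, freeze the factor $|X_s^{i,N}-X_s^{i,N,n}|^{p-2}(X_s^{i,N}-X_s^{i,N,n})$ at time $\kappa_n(s)$ to trigger a martingale cancellation, and control the Taylor remainder of $y\mapsto|y|^{p-2}y$---is exactly the one the paper follows. Steps~1 and~2 match the paper's decomposition $T_1+T_2+T_3$ and $T_2=T_{21}+T_{22}+T_{23}$ closely.

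However, Step~3 contains a genuine gap. You bound the correction
\[
D:=\bigl(X_s^{i,N}-X_{\kappa_n(s)}^{i,N}\bigr)-\bigl(X_s^{i,N,n}-X_{\kappa_n(s)}^{i,N,n}\bigr)
\]
crudely as $O(n^{-1/2})$ in $L^p$ via the triangle inequality and one-step estimates, then pair it with $\mathcal{M}(\kappa_n(s),s)=O(n^{-1/2})$. But after Young's inequality against the factor $|X_\cdot^{i,N}-X_\cdot^{i,N,n}|^{p-2}$ this yields only
\[
E\bigl[|X|^{p-2}\,|D|\,|\mathcal{M}|\bigr]\le K\,E|X|^p+K\,E\bigl[|D|^{p/2}|\mathcal{M}|^{p/2}\bigr]\le K\,E|X|^p+K\,n^{-p/2},
\]
which is the Euler rate, not $n^{-p}$. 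No choice of H\"older exponents repairs this if $D$ is treated as a black box.

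The paper closes this gap by writing $D$ explicitly as an integral over $[\kappa_n(s),s]$ of the coefficient differences (see the decomposition into $T_{22A}+T_{22B}+T_{22C}$) and then splitting each coefficient difference once more: a part controlled by $|X_r^{i,N}-X_r^{i,N,n}|$ via Assumption~\ref{as:polynomial:Lipschitz} and Remark~\ref{rem:poly:lipschitz}, which after H\"older over the short interval and pairing with $\mathcal{M}$ is absorbed into $\sup_{r\le s}E|X_r^{i,N}-X_r^{i,N,n}|^p$; and a scheme-local part $b_r(X_r^{i,N,n},\cdot)-b^n_{\kappa_n(r)}(\cdot)$ or $\sigma_r(X_r^{i,N,n},\cdot)-\tilde\sigma^n_{\kappa_n(r)}(\cdot)$. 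For the latter it is essential to invoke Lemma~\ref{lem:sigma-sigma:milstein}, which gives the Milstein-rate bound $E|\sigma_r-\tilde\sigma^n_{\kappa_n(r)}|^p\le Kn^{-p}$ (not merely $n^{-p/2}$); this extra half-order, combined with the $n^{-p/2}$ from BDG over the short interval and the $n^{-p/2}$ from $\mathcal{M}$, is precisely what produces $n^{-p}$. Your sketch never invokes Lemma~\ref{lem:sigma-sigma:milstein} at this stage, and without it the argument cannot reach the claimed rate.
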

\begin{proof} 
We first prove  
\begin{align} 
E \big|X_s^{i,N}  - X_{s}^{i,N,n}\big|^{p-2} \big(X_s^{i,N}  & - X_{s}^{i,N,n}\big)  \big(b_{\kappa_n(s)}\big(X_s^{i,N,n},\mu_s^{X,N,n}\big)-b_{\kappa_n(s)}\big(X_{\kappa_n(s)}^{i,N,n},\mu_{\kappa_n(s)}^{X,N,n}\big) \big) \notag
\\
&   \leq K n^{-p}  + K \sup_{i\in\{1,\cdots,N\}}\sup_{r\in[0,s]} E\big|X_r^{i,N}-X_r^{i,N,n}\big|^p \label{eq:rate:b:inter}
\end{align}
for any $s\in[0,T]$  and $n,N\in\mathbb{N}$. For this, notice that,
\begin{align} 
 E \big|&X_s^{i,N}  - X_{s}^{i,N,n}\big|^{p-2}  \big( X_s^{i,N}  - X_{s}^{i,N,n}\big) \big(b_{\kappa_n(s)}\big(  X_s^{i,N,n},\mu_{s}^{X,N,n}\big)  -b_{\kappa_n(s)}\big(X_{\kappa_n(s)}^{i,N,n},\mu_{\kappa_n(s)}^{X,N,n}\big) \big) \notag
\\
 = &  E \big|X_s^{i,N}  - X_{s}^{i,N,n}\big|^{p-2}  \sum_{k=1}^d  \big(X_s^{(k),i,N}  - X_{s}^{(k), i,N,n}\big)\Big\{  b^{(k)}_{\kappa_n(s)}\big(  X_s^{i,N,n},\mu_{s}^{X,N,n}\big)  -b^{(k)}_{\kappa_n(s)}\big(X_{\kappa_n(s)}^{i,N,n},\mu_{\kappa_n(s)}^{X,N,n}\big) \notag
\\
& \quad \qquad-  \partial_x b^{(k)}_{\kappa_n(s)}\big(X_{\kappa_n(s)}^{i,N,n},\mu_{\kappa_n(s)}^{X,N,n}\big) \big( X_{s}^{i,N,n}- X_{\kappa_n(s)}^{i,N,n} \big )  \notag
\\
& \quad \qquad-  \frac{1}{N} \sum_{j=1}^N  \partial_{\mu}b^{(k)}_{\kappa_n(s)}\big(X_{\kappa_n(s)}^{i,N,n}, \mu_{\kappa_n(s)}^{X,N,n}, X_{\kappa_n(s)}^{j,N,n}\big) \big(X_{s}^{j,N,n}- X_{\kappa_n(s)}^{j,N,n} \big) \Big\} \notag
\\
&  + E\big|X_s^{i,N}  - X_{s}^{i,N,n}\big|^{p-2}   \sum_{k=1}^d \big(X_s^{(k),i,N}  - X_{s}^{(k), i,N,n}\big)  \partial_x b^{(k)}_{\kappa_n(s)}\big(X_{\kappa_n(s)}^{i,N,n},\mu_{\kappa_n(s)}^{X,N,n}\big) \big(X_{s}^{i,N,n}- X_{\kappa_n(s)}^{i,N,n} \big)  \notag
\\
& +E\big|X_s^{i,N}  - X_{s}^{i,N,n}\big|^{p-2}   \sum_{k=1}^d \big(X_s^{(k),i,N}  - X_{s}^{(k), i,N,n}\big) \notag
\\
& \qquad \quad\times \frac{1}{N} \sum_{j=1}^N  \partial_{\mu}b^{(k)}_{\kappa_n(s)}\big(X_{\kappa_n(s)}^{i,N,n}, \mu_{\kappa_n(s)}^{X,N,n}, X_{\kappa_n(s)}^{j,N,n}\big) \big(X_{s}^{j,N,n}- X_{\kappa_n(s)}^{j,N,n} \big) \notag
\\
=:& T_1+T_2+T_3 \label{eq:T1+T2+T3}
\end{align}
for any $s\in[0,T]$, $i\in\{1,\ldots, N\}$ and $n, N\in \mathbb{N}$. By using Lemma \ref{lem:f:rate:local} and Young's inequality, 
\begin{align*}
T_1 := & E \big|X_s^{i,N}  - X_{s}^{i,N,n}\big|^{p-2}  \sum_{k=1}^d  \big(X_s^{(k),i,N}  - X_{s}^{(k), i,N,n}\big)\Big\{  b^{(k)}_{\kappa_n(s)}\big(  X_s^{i,N,n},\mu_{s}^{X,N,n}\big)  -b^{(k)}_{\kappa_n(s)}\big(X_{\kappa_n(s)}^{i,N,n},\mu_{\kappa_n(s)}^{X,N,n}\big) 
\\
& \qquad\qquad -  \partial_x b^{(k)}_{\kappa_n(s)}\big(X_{\kappa_n(s)}^{i,N,n},\mu_{\kappa_n(s)}^{X,N,n}\big) \big( X_{s}^{i,N,n}- X_{\kappa_n(s)}^{i,N,n} \big )  
\\
& \qquad \qquad -  \frac{1}{N} \sum_{j=1}^N  \partial_{\mu}b^{(k)}_{\kappa_n(s)}\big(X_{\kappa_n(s)}^{i,N,n}, \mu_{\kappa_n(s)}^{X,N,n}, X_{\kappa_n(s)}^{j,N,n}\big) \big(X_{s}^{j,N,n}- X_{\kappa_n(s)}^{j,N,n} \big) \Big\} 
\\
\leq &K E \big|X_s^{i,N}  - X_{s}^{i,N,n}\big|^{p-2}   \sum_{k=1}^d \big|X_s^{(k),i,N}  - X_{s}^{(k), i,N,n}\big| 
\\
& \times \Big\{\big(1+\big|X_s^{i,N, n}\big|+\big|X_{\kappa_n(s)}^{i,N,n}\big|\big)^{\rho/2-1} \big|X_s^{i,N, n}-X_{\kappa_n(s)}^{i,N,n}\big|^2  + \frac{1}{N}\sum_{j=1}^N  \big|X_s^{j,N, n}-X_{\kappa_n(s)}^{j,N,n}\big|^2 \Big\}
\\
\leq & K E \big|X_s^{i,N}  - X_{s}^{i,N,n}\big|^{p} + K E\Big\{\big(1+\big|X_s^{i,N, n}\big|+\big|X_{\kappa_n(s)}^{i,N,n}\big|\big)^{\rho/2-1} \big|X_s^{i,N, n}-X_{\kappa_n(s)}^{i,N,n}\big|^2 
\\
&+ \frac{1}{N}\sum_{j=1}^N  \big|X_s^{j,N, n}-X_{\kappa_n(s)}^{j,N,n}\big|^2 \Big\}^p
\end{align*} 
which on the application of H\"older's inequality, Lemmas \ref{lem:mb:milstein} and \ref{lem:one-step:rate:milstein} yields, 
\begin{align}
T_1 \leq K E \big|X_s^{i,N}  - X_{s}^{i,N,n}\big|^{p}+ K n^{-p} \label{eq:T1},
\end{align}
for any $s\in[0,T]$, $i\in\{1,\ldots, N\}$ and  $n, N\in\mathbb{N}$. 

Notice that $T_2$ can be written as, 
\begin{align}
T_2 := & E\big|X_s^{i,N}  - X_{s}^{i,N,n}\big|^{p-2}   \sum_{k=1}^d \big(X_s^{(k),i,N}  - X_{s}^{(k), i,N,n}\big)  \partial_x b^{(k)}_{\kappa_n(s)}\big(X_{\kappa_n(s)}^{i,N,n},\mu_{\kappa_n(s)}^{X,N,n}\big) \big(X_{s}^{i,N,n}- X_{\kappa_n(s)}^{i,N,n} \big) \notag
\\
= & E\big|X_s^{i,N}  - X_{s}^{i,N,n}\big|^{p-2}   \sum_{k=1}^d \big(X_s^{(k),i,N}  - X_{s}^{(k), i,N,n}\big)  \partial_x b^{(k)}_{\kappa_n(s)}\big(X_{\kappa_n(s)}^{i,N,n},\mu_{\kappa_n(s)}^{X,N,n}\big) \notag
\\
&  \qquad \times \Big\{\int_{\kappa_n(s)}^s  b^n_{\kappa_n(r)} \big(X_{\kappa_n(r)}^{i,N, n}, \mu_{\kappa_n(r)}^{X,N,n}\big) dr +   \int_{\kappa_n(s)}^s \tilde{\sigma}^n_{\kappa_n(r)} \big(r,X_{\kappa_n(r)}^{i,N,n}, \mu_{\kappa_n(r)}^{X,N,n}\big) dW_r^{i} \notag
\\
& \qquad \qquad +  \int_{\kappa_n(s)}^s \tilde{\sigma}^{0,n}_{\kappa_n(r)} \big(r,X_{\kappa_n(r)}^{i,N,n}, \mu_{\kappa_n(r)}^{X,N,n}\big) dW_r^{0}    \Big\}=: T_{21}+T_{22}+T_{23} \label{eq:T21+T22+T23}
\end{align}
for any $s\in[0,T]$, $i\in\{1,\ldots, N\}$ and $n, N\in\mathbb{N}$. For $T_{21}$, one uses the Cauchy-Schwarz inequality, Young's inequality, equation \eqref{eq:w2:mb} and Remarks \ref{rem:poly:growth}, \ref{rem:der:growth:poly} and Lemma \ref{lem:mb:milstein} to obtain, 
\begin{align}
T_{21} := & E\big|X_s^{i,N}  - X_{s}^{i,N,n}\big|^{p-2}   \sum_{k=1}^d \big(X_s^{(k),i,N}  - X_{s}^{(k), i,N,n}\big)  \notag
\\
& \times  \partial_x b^{(k)}_{\kappa_n(s)}\big(X_{\kappa_n(s)}^{i,N,n},\mu_{\kappa_n(s)}^{X,N,n}\big) \int_{\kappa_n(s)}^s  b^n_{\kappa_n(r)} \big(X_{\kappa_n(r)}^{i,N, n}, \mu_{\kappa_n(r)}^{X,N,n}\big) dr \notag
\\
\leq & K E\big|X_s^{i,N}  - X_{s}^{i,N,n}\big|^{p-1}     n^{-1} \Big\{ 1+ \big|X_{\kappa_n(s)}^{i,N, n}\big|^{\rho+1} +\mathcal{W}_2\big(\mu_{\kappa_n(s)}^{X,N,n}, \delta_0\big)\Big\}  \notag
\\
\leq & K \sup_{i\in\{1,\ldots, N\}}\sup_{0\leq r\leq s}E\big|X_s^{i,N}  - X_{s}^{i,N,n}\big|^{p} + K n^{-p} \label{eq:T21}
\end{align}
for any $s\in[0,T]$ and $n, N\in\mathbb{N}$.  

For $T_{22}$, let us define, for $k=1,\ldots,d$, 
\begin{align}
\mathcal{M}^{(k)}(\kappa_n(s), s):=\partial_x b^{(k)}_{\kappa_n(s)}\big(X_{\kappa_n(s)}^{i,N,n},\mu_{\kappa_n(s)}^{X,N,n}\big)  \int_{\kappa_n(s)}^s \tilde{\sigma}^n_{\kappa_n(r)} \big(r,X_{\kappa_n(r)}^{i,N,n}, \mu_{\kappa_n(r)}^{X,N,n}\big) dW_r^{i} \label{eq:mar1},
\end{align}
and notice that, due to Burkholder--Davis--Gundy inequality, H\"older's inequality and Remark \ref{rem:der:growth:poly}, 
\begin{align}
E\big|\mathcal{M}^{(k)}(\kappa_n(s), s)\big|^{q}& =E\Big|\partial_x b^{(k)}_{\kappa_n(s)}\big(X_{\kappa_n(s)}^{i,N,n},\mu_{\kappa_n(s)}^{X,N,n}\big)  \int_{\kappa_n(s)}^s \tilde{\sigma}^n_{\kappa_n(r)} \big(r,X_{\kappa_n(r)}^{i,N,n}, \mu_{\kappa_n(r)}^{X,N,n}\big) dW_r^{i}\Big|^{q} \notag
\\
& \leq K n^{-q/2+1}E \int_{\kappa_n(s)}^s \big|\partial_x b^{(k)}_{\kappa_n(s)}\big(X_{\kappa_n(s)}^{i,N,n},\mu_{\kappa_n(s)}^{X,N,n}\big) \big|^q \big|\tilde{\sigma}^n_{\kappa_n(r)} \big(r,X_{\kappa_n(r)}^{i,N,n}, \mu_{\kappa_n(r)}^{X,N,n}\big) \big|^q dr \notag
\\
& \leq K n^{-q/2+1}E \int_{\kappa_n(s)}^s \big( 1+ \big|X_{\kappa_n(s)}^{i,N,n}\big|\big)^{(q\rho)/2} \big|\tilde{\sigma}^n_{\kappa_n(r)} \big(r,X_{\kappa_n(r)}^{i,N,n}, \mu_{\kappa_n(r)}^{X,N,n}\big) \big|^q dr \notag
\\
& \leq K n^{-q/2+1} \int_{\kappa_n(s)}^s \big\{E\big( 1+ \big|X_{\kappa_n(s)}^{i,N,n}\big|\big)^{q\rho p_0}  \big\{E\big|\tilde{\sigma}^n_{\kappa_n(r)} \big(r,X_{\kappa_n(r)}^{i,N,n}, \mu_{\kappa_n(r)}^{X,N,n}\big) \big|^{2q}\big\}^{1/2} dr  \notag
\end{align}
which on using Lemma \ref{lem:mb:milstein} and Corollary \ref{cor:sigma:rate} yields, 
\begin{align}
E\big|\mathcal{M}(\kappa_n(s), s)\big|^{q} \leq  K n^{-q/2} \label{eq:mar1:rate}
\end{align}
for any $q\leq p_0/(\rho+2)$ and $s\in[0,T]$.  Using the notation in equation \eqref{eq:mar1} along with Lemma~ \ref{lem:mb:milstein} and the Cauchy-Schwarz inequality, 
\begin{align*}
T_{22}:=& E\big|X_s^{i,N}  - X_{s}^{i,N,n}\big|^{p-2}   \sum_{k=1}^d \big(X_s^{(k),i,N}  - X_{s}^{(k), i,N,n}\big)  
\\
& \qquad \times \partial_x b^{(k)}_{\kappa_n(s)}\big(X_{\kappa_n(s)}^{i,N,n},\mu_{\kappa_n(s)}^{X,N,n}\big)  \int_{\kappa_n(s)}^s \tilde{\sigma}^n_{\kappa_n(r)} \big(r,X_{\kappa_n(r)}^{i,N,n}, \mu_{\kappa_n(r)}^{X,N,n}\big) dW_r^{i} 
\\
= & E\big|X_{\kappa_n(s)}^{i,N}  - X_{{\kappa_n(s)}}^{i,N,n}\big|^{p-2}   \sum_{k=1}^d \big(X_{\kappa_n(s)}^{(k),i,N}  - X_{{\kappa_n(s)}}^{(k), i,N,n}\big) \mathcal{M}^{(k)}(\kappa_n(s), s)
\\
& + E\Big\{\big|X_s^{i,N}  - X_{s}^{i,N,n}\big|^{p-2}   \sum_{k=1}^d \big(X_s^{(k),i,N}  - X_{s}^{(k), i,N,n}\big)   
\\
& \qquad - \big|X_{\kappa_n(s)}^{i,N}  - X_{{\kappa_n(s)}}^{i,N,n}\big|^{p-2}   \sum_{k=1}^d \big(X_{\kappa_n(s)}^{(k),i,N}  - X_{{\kappa_n(s)}}^{(k), i,N,n}\big)  \Big\}\mathcal{M}^{(k)}(\kappa_n(s), s)
\\
=& E\sum_{k=1}^d  \Big\{\big|X_s^{i,N}  - X_{s}^{i,N,n}\big|^{p-2}   \big(X_s^{(k),i,N}  - X_{s}^{(k), i,N,n}\big)   
\\
& \qquad - \big|X_{\kappa_n(s)}^{i,N}  - X_{{\kappa_n(s)}}^{i,N,n}\big|^{p-2}   \big(X_{\kappa_n(s)}^{(k),i,N}  - X_{{\kappa_n(s)}}^{(k), i,N,n}\big)  \Big\}\mathcal{M}^{(k)}(\kappa_n(s), s)
\\
\leq & E  \Big|\big|X_s^{i,N}  - X_{s}^{i,N,n}\big|^{p-2}   \big(X_s^{i,N}  - X_{s}^{i,N,n}\big)   
\\
& \qquad - \big|X_{\kappa_n(s)}^{i,N}  - X_{{\kappa_n(s)}}^{i,N,n}\big|^{p-2}   \big(X_{\kappa_n(s)}^{i,N}  - X_{{\kappa_n(s)}}^{i,N,n}\big)  \Big| \big|\mathcal{M}(\kappa_n(s), s)\big| 
\end{align*}
for any $s\in[0,T]$, $i\in\{1,\ldots, N\}$ and $n,N\in\mathbb{N}$. Also, for an $\mathbb{R}^d$-valued  function $f(z)=|z|^{p-2} z$ for any $z\in\mathbb{R}^d$ and a $\theta \in (0,1)$, 
\begin{align*}
\Big||x|^{p-2} x- |y|^{p-2} y \Big| =\big|f(x)-f(y) \big| \leq K (p-1)|\theta x +(1-\theta)y|^{p-2}  |x-y| \leq K \big\{|x|^{p-2}  +|y|^{p-2}  \big\} |x-y|
\end{align*}
for any $x, y \in \mathbb{R}^d$ which further implies on taking $x=X_s^{i,N}  - X_{s}^{i,N,n}$ and $y=X_{\kappa_n(s)}^{i,N}  - X_{{\kappa_n(s)}}^{i,N,n}$, 
\begin{align}
\Big| \big|X_s^{i,N}  & - X_{s}^{i,N,n}\big|^{p-2} \big(X_s^{i,N}  - X_{s}^{i,N,n}\big)-\big|X_{\kappa_n(s)}^{i,N}  - X_{{\kappa_n(s)}}^{i,N,n}\big|^{p-2} \big(X_{\kappa_n(s)}^{i,N}  - X_{{\kappa_n(s)}}^{i,N,n}\big) \Big| \notag
\\
& \leq  K \big\{\big|X_s^{i,N}  - X_{s}^{i,N,n}\big|^{p-2}  +\big|X_{\kappa_n(s)}^{i,N}  - X_{{\kappa_n(s)}}^{i,N,n}\big|^{p-2}  \big\} \big|X_s^{i,N}  - X_{s}^{i,N,n}-\big(X_{\kappa_n(s)}^{i,N}  - X_{{\kappa_n(s)}}^{i,N,n}\big)\big| \label{eq:mvt}
\end{align}
for any $s\in[0,T]$, $i\in\{1,\ldots, N\}$ and $n,N\in\mathbb{N}$. Thus, 
\begin{align}
T_{22}\leq & K  E \big\{\big|X_s^{i,N}  - X_{s}^{i,N,n}\big|^{p-2}  +\big|X_{\kappa_n(s)}^{i,N}  - X_{{\kappa_n(s)}}^{i,N,n}\big|^{p-2}  \big\} \big|X_s^{i,N} - X_{\kappa_n(s)}^{i,N} -\big( X_{s}^{i,N,n}   - X_{{\kappa_n(s)}}^{i,N,n}\big)\big| \notag
\\
&  \times \big| \mathcal{M}(\kappa_n(s), s) \big| \notag
\\
\leq & K  E \big\{\big|X_s^{i,N}  - X_{s}^{i,N,n}\big|^{p-2}  +\big|X_{\kappa_n(s)}^{i,N}  - X_{{\kappa_n(s)}}^{i,N,n}\big|^{p-2}  \big\}  \notag
\\
& \times \Big\{\,\Big|\int_{\kappa_n(s)}^s \big\{b_r\big(X_r^{i,N}, \mu_r^{X,N}\big)-b^n_{\kappa_n(r)} \big(X_{\kappa_n(r)}^{i,N, n}, \mu_{\kappa_n(r)}^{X,N,n}\big)  \big\} dr \Big| \notag
\\
&\qquad  + \Big|\int_{\kappa_n(s)}^s \big\{ \sigma_r\big(X_r^{i,N}, \mu_r^{X,N}\big)- \tilde{\sigma}^n_{\kappa_n(r)} \big(r,X_{\kappa_n(r)}^{i,N,n}, \mu_{\kappa_n(r)}^{X,N,n}\big) \big\} dW_r^{i}  \Big| \notag
\\
&\qquad  + \Big|\int_{\kappa_n(s)}^s \big\{ \sigma^0_r\big(X_r^{i,N}, \mu_r^{X,N}\big) - \tilde{\sigma}^{0,n}_{\kappa_n(r)} \big(r,X_{\kappa_n(r)}^{i,N,n}, \mu_{\kappa_n(r)}^{X,N,n}\big) \big\} dW_r^{0} \Big| \, \Big\}  \big|  \mathcal{M}(\kappa_n(s), s) \big|  \notag
\\
=: & \, T_{22A}+T_{22B}+T_{22C}  \label{eq:T22A+T22B+T22C}
\end{align}
 for any $s\in[0,T]$, $i\in\{1,\ldots, N\}$ and $n,N\in\mathbb{N}$. Also, by Assumption \ref{as:polynomial:Lipschitz}, 
 \begin{align*}
 T_{22A} : = & K  E \big\{\big|X_s^{i,N}  - X_{s}^{i,N,n}\big|^{p-2}  +\big|X_{\kappa_n(s)}^{i,N}  - X_{{\kappa_n(s)}}^{i,N,n}\big|^{p-2}  \big\} 
 \\
 & \times \Big|\int_{\kappa_n(s)}^s \big\{b_r\big(X_r^{i,N}, \mu_r^{X,N}\big)-b^n_{\kappa_n(r)} \big(X_{\kappa_n(r)}^{i,N, n}, \mu_{\kappa_n(r)}^{X,N,n}\big)  \big\} dr \Big| \big|  \mathcal{M}(\kappa_n(s), s) \big|
 \\
 \leq  & K  E \big\{\big|X_s^{i,N}  - X_{s}^{i,N,n}\big|^{p-2}  +\big|X_{\kappa_n(s)}^{i,N}  - X_{{\kappa_n(s)}}^{i,N,n}\big|^{p-2}  \big\} 
 \\
 & \times \int_{\kappa_n(s)}^s \Big\{ \big|b_r\big(X_r^{i,N}, \mu_r^{X,N}\big) - b_r\big(X_r^{i,N, n}, \mu_r^{X,N, n}\big)\big| 
 \\
 &\qquad +  \big| b_r\big(X_r^{i,N, n}, \mu_r^{X,N, n}\big) -b^n_{\kappa_n(r)} \big(X_{\kappa_n(r)}^{i,N, n}, \mu_{\kappa_n(r)}^{X,N,n}\big)  \big| \Big\} dr  \big|  \mathcal{M}(\kappa_n(s), s) \big|
 \\
 \leq  & K  E \big\{\big|X_s^{i,N}  - X_{s}^{i,N,n}\big|^{p-2}  +\big|X_{\kappa_n(s)}^{i,N}  - X_{{\kappa_n(s)}}^{i,N,n}\big|^{p-2}  \big\} 
 \\
 & \times \int_{\kappa_n(s)}^s \Big\{ \big(1+ \big|X_r^{i,N}\big|+\big|X_r^{i,N,n}\big|\big)^{\rho/2}\big|X_r^{i,N}-X_r^{i,N,n}\big| +\mathcal{W}_2 \big( \mu_r^{X,N},  \mu_r^{X,N, n} \big)
 \\
 &\qquad +  \big| b_r\big(X_r^{i,N, n}, \mu_r^{X,N, n}\big) -b^n_{\kappa_n(r)} \big(X_{\kappa_n(r)}^{i,N, n}, \mu_{\kappa_n(r)}^{X,N,n}\big)  \big| \Big\} dr   \big| \mathcal{M}(\kappa_n(s), s) \big|
 \end{align*}
 and the application of Young's inequality  and H\"older's inequality yields, 
 \begin{align*}
 T_{22A} \leq & K  E \big\{\big|X_s^{i,N}  - X_{s}^{i,N,n}\big|^{p-2}  +\big|X_{\kappa_n(s)}^{i,N}  - X_{{\kappa_n(s)}}^{i,N,n}\big|^{p-2}  \big\}^{p/(p-2)}  
 \\
 & + K n^{-p/2+1}E\int_{\kappa_n(s)}^s \Big\{ \big(1+ \big|X_r^{i,N}\big|+\big|X_r^{i,N,n}\big|\big)^{\rho/2}\big|X_r^{i,N}-X_r^{i,N,n}\big| 
\\
&+\mathcal{W}_2 \big( \mu_r^{X,N},  \mu_r^{X,N, n} \big)+  \big| b_r\big(X_r^{i,N, n}, \mu_r^{X,N, n}\big) -b^n_{\kappa_n(r)} \big(X_{\kappa_n(r)}^{i,N, n}, \mu_{\kappa_n(r)}^{X,N,n}\big)  \big| \Big\}^{p/2} dr \big|  \mathcal{M}(\kappa_n(s), s) \big|^{p/2}
\\
\leq & K \sup_{0\leq r\leq s}E \big|X_r^{i,N}  - X_{r}^{i,N,n}\big|^{p}  
\\
& + K n^{-p/2+1}E\int_{\kappa_n(s)}^s  \big(1+ \big|X_r^{i,N}\big|+\big|X_r^{i,N,n}\big|\big)^{(p\rho)/4}\big|X_r^{i,N}-X_r^{i,N,n}\big|^{p/2} dr \big| \mathcal{M}(\kappa_n(s), s)\big|^{p/2}
\\
&+ K n^{-p/2+1}E\int_{\kappa_n(s)}^s  \mathcal{W}_2^{p/2} \big( \mu_r^{X,N},  \mu_r^{X,N, n} \big) dr \big|  \mathcal{M}(\kappa_n(s), s)\big|^{p/2}
\\
&+ K n^{-p/2+1}E\int_{\kappa_n(s)}^s   \big| b_r\big(X_r^{i,N, n}, \mu_r^{X,N, n}\big) -b^n_{\kappa_n(r)} \big(X_{\kappa_n(r)}^{i,N, n}, \mu_{\kappa_n(r)}^{X,N,n}\big)  \big|^{p/2} dr  \big|  \mathcal{M}(\kappa_n(s), s) \big|^{p/2}
\\
\leq & K \sup_{0\leq r\leq s}E \big|X_r^{i,N}  - X_{r}^{i,N,n}\big|^{p}  
\\
& + K n^{-p/2+1}\int_{\kappa_n(s)}^s  \big\{E\big|X_r^{i,N}-X_r^{i,N,n}\big|^{p}\big\}^{1/2} \Big\{E\big(1+ \big|X_r^{i,N}\big|+\big|X_r^{i,N,n}\big|\big)^{(p\rho)/2} \big| \mathcal{M}(\kappa_n(s), s) \big|^{p}\Big\}^{1/2} dr
\\
&+ K n^{-p/2+1}\int_{\kappa_n(s)}^s \big\{E \mathcal{W}_2^{p} \big( \mu_r^{X,N},  \mu_r^{X,N, n} \big)  \big\}^{1/2} \Big\{E\big| \mathcal{M}(\kappa_n(s), s) \big|^{p}\Big\}^{1/2} dr
\\
&+ K n^{-p/2+1}\int_{\kappa_n(s)}^s  \big\{ E\big| b_r\big(X_r^{i,N, n}, \mu_r^{X,N, n}\big) -b^n_{\kappa_n(r)} \big(X_{\kappa_n(r)}^{i,N, n}, \mu_{\kappa_n(r)}^{X,N,n}\big)  \big|^{p} \big\}^{1/2} \Big\{E\big| \mathcal{M}(\kappa_n(s), s)\big|^{p} \big\}^{1/2} dr
\end{align*}
for any $s\in[0,T]$, $i\in\{1,\ldots, N\}$ and $n,N\in\mathbb{N}$.   On using estimates from  equation \eqref{eq:mar1:rate}, Lemmas \ref{lem:mb:milstein} and \ref{lem:b-b}, 
\begin{align*}
 T_{22A} \leq & K \sup_{0\leq r\leq s}E \big|X_r^{i,N}  - X_{r}^{i,N,n}\big|^{p} 
  \\
& + K n^{-p/2+1}\int_{\kappa_n(s)}^s  \big\{E\big|X_r^{i,N}-X_r^{i,N,n}\big|^{p}\big\}^{1/2} \Big\{E\big(1+ \big|X_r^{i,N}\big|+\big|X_r^{i,N,n}\big|\big)^{p\rho} E \big| \mathcal{M}(\kappa_n(s), s) \big|^{2p}\Big\}^{1/4} dr
\\
&+ K n^{-3p/4+1}\int_{\kappa_n(s)}^s \big\{E \mathcal{W}_2^{p} \big( \mu_r^{X,N},  \mu_r^{X,N, n} \big)  \big\}^{1/2}  dr + K n^{-p}
\end{align*}
and thus the application of estimates in equation \eqref{eq:w2:particle} and Young's inequality yields, 
\begin{align} \label{eq:T22A}
T_{22A} \leq & K \sup_{i\in \{1,\ldots N\}}\sup_{0\leq r\leq s}E \big|X_r^{i,N}  - X_{r}^{i,N,n}\big|^{p}  + K n^{-p}, 
\end{align}
for any $s\in[0,T]$, and $n,N\in\mathbb{N}$.

For estimating $T_{22B}$, one notices that, 
\begin{align*}
T_{22B} :=&K  E \big\{\big|X_s^{i,N}  - X_{s}^{i,N,n}\big|^{p-2}  +\big|X_{\kappa_n(s)}^{i,N}  - X_{{\kappa_n(s)}}^{i,N,n}\big|^{p-2}  \big\} 
\\
& \qquad \times \Big|\int_{\kappa_n(s)}^s \big\{ \sigma_r\big(X_r^{i,N}, \mu_r^{X,N}\big)- \tilde{\sigma}^n_{\kappa_n(r)} \big(r,X_{\kappa_n(r)}^{i,N,n}, \mu_{\kappa_n(r)}^{X,N,n}\big) \big\} dW_r^{i}  \Big| \big|  \mathcal{M}(\kappa_n(s), s) \big|
\\
\leq & K  E \big\{\big|X_s^{i,N}  - X_{s}^{i,N,n}\big|^{p-2}  +\big|X_{\kappa_n(s)}^{i,N}  - X_{{\kappa_n(s)}}^{i,N,n}\big|^{p-2}  \big\} 
\\
& \qquad \times \Big\{ \Big|\int_{\kappa_n(s)}^s \big\{ \sigma_r\big(X_r^{i,N}, \mu_r^{X,N}\big)- \sigma_{r} \big(X_{r}^{i,N,n}, \mu_{r}^{X,N,n}\big) 
\\
& \qquad \qquad + \sigma_r\big(X_r^{i,N,n}, \mu_r^{X,N,n}\big)- \tilde{\sigma}^n_{\kappa_n(r)} \big(r,X_{\kappa_n(r)}^{i,N,n}, \mu_{\kappa_n(r)}^{X,N,n}\big) \big\} dW_r^{i}  \Big| \Big\}\big|  \mathcal{M}(\kappa_n(s), s) \big|
\\
\leq &  K  E \big\{\big|X_s^{i,N}  - X_{s}^{i,N,n}\big|^{p-2}  +\big|X_{\kappa_n(s)}^{i,N}  - X_{{\kappa_n(s)}}^{i,N,n}\big|^{p-2}  \big\} 
\\
& \qquad \times  \Big|\int_{\kappa_n(s)}^s \big\{ \sigma_r\big(X_r^{i,N}, \mu_r^{X,N}\big)- \sigma_{r} \big(X_{r}^{i,N,n}, \mu_{r}^{X,N,n}\big)\big\} dW_r^{i}  \Big| \big|  \mathcal{M}(\kappa_n(s), s) \big|
\\
& +  K  E \big\{\big|X_s^{i,N}  - X_{s}^{i,N,n}\big|^{p-2}  +\big|X_{\kappa_n(s)}^{i,N}  - X_{{\kappa_n(s)}}^{i,N,n}\big|^{p-2}  \big\} 
\\
& \qquad \times  \Big|\int_{\kappa_n(s)}^s \big\{ \sigma_r\big(X_r^{i,N, n}, \mu_r^{X,N, n}\big)- \tilde{\sigma}^n_{\kappa_n(r)} \big(r,X_{\kappa_n(r)}^{i,N,n}, \mu_{\kappa_n(r)}^{X,N,n}\big) \big\} dW_r^{i}  \Big| \big|  \mathcal{M}(\kappa_n(s), s) \big|
\end{align*}
which on using H\"older's inequality and Young's inequality yields, 
\begin{align*}
T_{22B} \leq & K  E \big\{\big|X_s^{i,N}  - X_{s}^{i,N,n}\big|^{p-2}  +\big|X_{\kappa_n(s)}^{i,N}  - X_{{\kappa_n(s)}}^{i,N,n}\big|^{p-2}  \big\}^{p/(p-2)} 
\\
& + K  E\Big|\int_{\kappa_n(s)}^s \big\{ \sigma_r\big(X_r^{i,N}, \mu_r^{X,N}\big)- \sigma_{r} \big(X_{r}^{i,N,n}, \mu_{r}^{X,N,n}\big)\big\} dW_r^{i}  \Big|^{p/2} \big|  \mathcal{M}(\kappa_n(s), s) \big|^{p/2}
\\
& + K E \Big|\int_{\kappa_n(s)}^s \big\{ \sigma_r\big(X_r^{i,N, n}, \mu_r^{X,N, n}\big)- \tilde{\sigma}^n_{\kappa_n(r)} \big(r,X_{\kappa_n(r)}^{i,N,n}, \mu_{\kappa_n(r)}^{X,N,n}\big) \big\} dW_r^{i}  \Big|^{p/2} \big|  \mathcal{M}(\kappa_n(s), s) \big|^{p/2}
\\
 \leq & K  \sup_{0\leq r \leq s}E \big|X_r^{i,N}  - X_{r}^{i,N,n}\big|^{p} 
 \\
 & + K  \Big\{E\Big|\int_{\kappa_n(s)}^s \big\{ \sigma_r\big(X_r^{i,N}, \mu_r^{X,N}\big)- \sigma_{r} \big(X_{r}^{i,N,n}, \mu_{r}^{X,N,n}\big)\big\} dW_r^{i}  \Big|^{3p/4}\Big\}^{2/3}\big\{ E\big|  \mathcal{M}(\kappa_n(s), s) \big|^{3p/2}\big\}^{1/3}
 \\
 & + K \Big\{E \Big|\int_{\kappa_n(s)}^s \big\{ \sigma_r\big(X_r^{i,N, n}, \mu_r^{X,N, n}\big)- \tilde{\sigma}^n_{\kappa_n(r)} \big(r,X_{\kappa_n(r)}^{i,N,n}, \mu_{\kappa_n(r)}^{X,N,n}\big) \big\} dW_r^{i}  \Big|^{p} \Big\}^{1/2}\big\{E\big|  \mathcal{M}(\kappa_n(s), s) \big|^{p}\big\}^{1/2}
\end{align*}
and then one applies the estimates in equation \eqref{eq:mar1:rate}, Remark \ref{rem:poly:lipschitz} and Lemma \ref{lem:sigma-sigma:milstein} to obtain, 
\begin{align*}
T_{22B} \leq & K  \sup_{0\leq r \leq s}E \big|X_r^{i,N}  - X_{r}^{i,N,n}\big|^{p} 
  \\
 & + K  n^{-p/4}\Big\{ n^{-3p/8+1}E\int_{\kappa_n(s)}^s \big| \sigma_r\big(X_r^{i,N}, \mu_r^{X,N}\big)- \sigma_{r} \big(X_{r}^{i,N,n}, \mu_{r}^{X,N,n}\big)\big|^{3p/4} dr  \Big\}^{2/3}
 \\
 & + Kn^{-p/4} \Big\{ n^{-p/2+1}E \int_{\kappa_n(s)}^s \big| \sigma_r\big(X_r^{i,N, n}, \mu_r^{X,N,n }\big)- \tilde{\sigma}^n_{\kappa_n(r)} \big(r,X_{\kappa_n(r)}^{i,N,n}, \mu_{\kappa_n(r)}^{X,N,n}\big) \big|^p dr \Big\}^{1/2}
 \\
\leq & K  \sup_{0\leq r \leq s}E \big|X_r^{i,N}  - X_{r}^{i,N,n}\big|^{p} +K n^{-p}
\\
& + K  n^{-p/4}\Big\{ n^{-3p/8+1}\int_{\kappa_n(s)}^s \big\{ E\big( 1+ \big|X_r^{i,N}\big| + \big|X_{r}^{i,N,n}\big|\big)^{3\rho p/16}\big|X_r^{i,N}-X_{r}^{i,N,n}\big|^{3p/4}
\\
&\qquad \qquad \qquad +E\mathcal{W}_2^{3p/4} \big( \mu_r^{X,N}, \mu_{r}^{X,N,n}\big) \big\} dr  \Big\}^{2/3}
 \\
\leq & K  \sup_{0\leq r \leq s}E \big|X_r^{i,N}  - X_{r}^{i,N,n}\big|^{p} +K n^{-p}
\\
& + K  n^{-p/4}\Big\{ n^{-3p/8+1}\int_{\kappa_n(s)}^s\big\{ \big( E\big( 1+ \big|X_r^{i,N}\big| + \big|X_{r}^{i,N,n}\big|\big)^{3\rho p/4} \big)^{1/4}\big( E\big|X_r^{i,N}-X_{r}^{i,N,n}\big|^{p}\big)^{3/4}
\\
&\qquad \qquad \qquad +\big(E\mathcal{W}_2^{p} \big( \mu_r^{X,N}, \mu_{r}^{X,N,n}\big)\big)^{3/4} \big\} dr  \Big\}^{2/3}
\end{align*}
for any $s\in[0,T]$, $i\in\{1,\ldots, N\}$ and $n,N\in\mathbb{N}$. Further, one uses Lemma \ref{lem:mb:milstein}, equation \eqref{eq:w2:particle} and Young's inequality to obtain, 
\begin{align}
T_{22B} \leq &  K  \sup_{0\leq r \leq s}E \big|X_r^{i,N}  - X_{r}^{i,N,n}\big|^{p} +K n^{-p} \notag
\\
& + K  n^{-p/2} \Big\{\Big( \sup_{0\leq r\leq s}E\big|X_r^{i,N}-X_{r}^{i,N,n}\big|^{p}\Big)^{3/4} + \Big( \sup_{0\leq r\leq s}\frac{1}{N}\sum_{j=1}^N  E\big|X_r^{j,N}-X_{r}^{j,N,n}\big|^{p} \Big)^{3/4}\Big\}^{2/3} \notag
\\
 \leq & K  \sup_{i\in\{1,\ldots,N\}}\sup_{0\leq r \leq s}E \big|X_r^{i,N}  - X_{r}^{i,N,n}\big|^{p} +K n^{-p} \label{eq:T22B}
\end{align}
for any $s\in[0,T]$ and $n,N\in\mathbb{N}$.  

By adapting arguments similar to the one used in the estimation of $T_{22B}$, one obtains,
\begin{align}
T_{22C} : = & K  E \big\{\big|X_s^{i,N}  - X_{s}^{i,N,n}\big|^{p-2}  +\big|X_{\kappa_n(s)}^{i,N}  - X_{{\kappa_n(s)}}^{i,N,n}\big|^{p-2}  \big\} \notag
\\
& \times \Big|\int_{\kappa_n(s)}^s \big\{ \sigma^0_r\big(X_r^{i,N}, \mu_r^{X,N}\big) - \tilde{\sigma}^{0,n}_{\kappa_n(r)} \big(r,X_{\kappa_n(r)}^{i,N,n}, \mu_{\kappa_n(r)}^{X,N,n}\big) \big\} dW_r^{0} \Big| \,  \big|  \mathcal{M}(\kappa_n(s), s) \big| \notag
\\
 \leq & K  \sup_{i\in\{1,\ldots,N\}}\sup_{0\leq r \leq s}E \big|X_r^{i,N}  - X_{r}^{i,N,n}\big|^{p} +K n^{-p} \label{eq:T22C}
\end{align}
for any $s\in[0,T]$ and $n,N\in\mathbb{N}$.  

Hence, on combining estimates obtained in equations  \eqref{eq:T22A}, \eqref{eq:T22B} and \eqref{eq:T22C} in equation \eqref{eq:T22A+T22B+T22C}, one obtains
\begin{align}
T_{22} \leq K  \sup_{i\in\{1,\ldots,N\}}\sup_{0\leq r \leq s}E \big|X_r^{i,N}  - X_{r}^{i,N,n}\big|^{p} +K n^{-p} \label{eq:T22}
\end{align}
for any $s\in[0,T]$ and $n,N\in\mathbb{N}$. 

By using methods similar to the one used in estimating $T_{22}$, one also obtains, 
\begin{align}
T_{23}:= & E\big|X_s^{i,N}  - X_{s}^{i,N,n}\big|^{p-2}   \sum_{k=1}^d \big(X_s^{(k),i,N}  - X_{s}^{(k), i,N,n}\big)  \notag
\\
& \qquad \times \partial_x b^{(k)}_{\kappa_n(s)}\big(X_{\kappa_n(s)}^{i,N,n},\mu_{\kappa_n(s)}^{X,N,n}\big)  \int_{\kappa_n(s)}^s \tilde{\sigma}^{0,n}_{\kappa_n(r)} \big(r,X_{\kappa_n(r)}^{i,N,n}, \mu_{\kappa_n(r)}^{X,N,n}\big) dW_r^{0} \notag
\\
 \leq & K  \sup_{i\in\{1,\ldots,N\}}\sup_{0\leq r \leq s}E \big|X_r^{i,N}  - X_{r}^{i,N,n}\big|^{p} +K n^{-p} \label{eq:T23}
\end{align}
for any $s\in[0,T]$ and $n,N\in\mathbb{N}$.  

Thus, merging  the estimates in equations \eqref{eq:T21}, \eqref{eq:T22} and \eqref{eq:T23} yields, 
\begin{align}
T_{2} \leq K  \sup_{i\in\{1,\ldots,N\}}\sup_{0\leq r \leq s}E \big|X_r^{i,N}  - X_{r}^{i,N,n}\big|^{p} +K n^{-p} \label{eq:T2}
\end{align}
for any $s\in[0,T]$ and $n,N\in\mathbb{N}$.

For estimating $T_3$, use equation \eqref{eq:milstein} to get, 
 \begin{align*}
T_3 := & E\big|X_s^{i,N}  - X_{s}^{i,N,n}\big|^{p-2}   \sum_{k=1}^d \big(X_s^{(k),i,N}  - X_{s}^{(k), i,N,n}\big)
\\
& \qquad \times \frac{1}{N} \sum_{j=1}^N  \partial_{\mu}b^{(k)}_{\kappa_n(s)}\big(X_{\kappa_n(s)}^{i,N,n}, \mu_{\kappa_n(s)}^{X,N,n}, X_{\kappa_n(s)}^{j,N,n}\big) \big(X_{s}^{j,N,n}- X_{\kappa_n(s)}^{j,N,n} \big) 
\\
=& E\big|X_s^{i,N}  - X_{s}^{i,N,n}\big|^{p-2}   \sum_{k=1}^d \big(X_s^{(k),i,N}  - X_{s}^{(k), i,N,n}\big)\frac{1}{N} \sum_{j=1}^N  \partial_{\mu}b^{(k)}_{\kappa_n(s)}\big(X_{\kappa_n(s)}^{i,N,n}, \mu_{\kappa_n(s)}^{X,N,n}, X_{\kappa_n(s)}^{j,N,n}\big)
\\
& \qquad \Big\{\int_{\kappa_n(s)}^s  b^n_{\kappa_n(r)} \big(X_{\kappa_n(r)}^{j,N, n}, \mu_{\kappa_n(r)}^{X,N,n}\big) dr +   \int_{\kappa_n(s)}^s \tilde{\sigma}^n_{\kappa_n(r)} \big(r,X_{\kappa_n(r)}^{j,N,n}, \mu_{\kappa_n(r)}^{X,N,n}\big) dW_r^{j} \notag
\\
& \qquad \qquad +  \int_{\kappa_n(s)}^s \tilde{\sigma}^{0,n}_{\kappa_n(r)} \big(r,X_{\kappa_n(r)}^{j,N,n}, \mu_{\kappa_n(r)}^{X,N,n}\big) dW_r^{0}    \Big\}=: T_{31}+T_{32}+T_{33}
\end{align*}
for any $s\in[0,T]$, $i\in\{1,\ldots, N\}$ and $n,N\in\mathbb{N}$.

For estimating $T_{31}$, one uses the Cauchy-Schwarz inequality, Remarks \ref{rem:poly:growth}, \ref{rem:der:growth:poly} and H\"older's inequality to obtain, 
\begin{align*}
T_{31}:= & E\big|X_s^{i,N}  - X_{s}^{i,N,n}\big|^{p-2}   \sum_{k=1}^d \big(X_s^{(k),i,N}  - X_{s}^{(k), i,N,n}\big) 
\\
& \qquad \times \frac{1}{N} \sum_{j=1}^N  \partial_{\mu}b^{(k)}_{\kappa_n(s)}\big(X_{\kappa_n(s)}^{i,N,n}, \mu_{\kappa_n(s)}^{X,N,n}, X_{\kappa_n(s)}^{j,N,n}\big)  \int_{\kappa_n(s)}^s  b^n_{\kappa_n(r)} \big(X_{\kappa_n(r)}^{j,N, n}, \mu_{\kappa_n(r)}^{X,N,n}\big) dr 
\\
\leq & E\big|X_s^{i,N}  - X_{s}^{i,N,n}\big|^{p-2}   \sum_{k=1}^d \big|X_s^{(k),i,N}  - X_{s}^{(k), i,N,n}\big|   
\\
& \qquad \times \frac{1}{N} \sum_{j=1}^N  \big| \partial_{\mu}b^{(k)}_{\kappa_n(s)}\big(X_{\kappa_n(s)}^{i,N,n}, \mu_{\kappa_n(s)}^{X,N,n}, X_{\kappa_n(s)}^{j,N,n}\big)\big| \int_{\kappa_n(s)}^s  \big|b^n_{\kappa_n(r)} \big(X_{\kappa_n(r)}^{j,N, n}, \mu_{\kappa_n(r)}^{X,N,n}\big)\big|  dr 
\\
\leq & K n^{-1} E\big|X_s^{i,N}  - X_{s}^{i,N,n}\big|^{p-1}    \Big\{ \big(1+\big|X_{\kappa_n(s)}^{j,N, n}\big|\big)^{\rho/2+1}   + \mathcal{W}_2\big(\mu_{\kappa_n(r)}^{X,N,n}, \delta_0 \big)  \Big\}
\end{align*}
which on using  Young's inequality, equation \eqref{eq:w2:mb} and Lemma \ref{lem:mb:milstein} yields, 
\begin{align}
T_{31} \leq  K \sup_{i\in\{1,\ldots, N\}}\sup_{0\leq r \leq s} E\big|X_s^{i,N}  - X_{s}^{i,N,n}\big|^{p} + K n^{-p}  \label{eq:T31}
\end{align}
for any $s\in[0,T]$ and $n,N\in\mathbb{N}$.

For estimating $T_{32}$, let us define, for $k=1,\ldots, d$, 
\begin{align}
\mathcal{M}^{N,(k)}(\kappa_n(s), s):= \frac{1}{N} \sum_{j=1}^N  \partial_{\mu}b^{(k)}_{\kappa_n(s)}\big(X_{\kappa_n(s)}^{i,N,n}, \mu_{\kappa_n(s)}^{X,N,n}, X_{\kappa_n(s)}^{j,N,n}\big) \int_{\kappa_n(s)}^s \tilde{\sigma}^n_{\kappa_n(r)} \big(r,X_{\kappa_n(r)}^{j,N,n}, \mu_{\kappa_n(r)}^{X,N,n}\big) dW_r^{j} \label{eq:mar3}
\end{align}
and observe that due to Burkholder--Davis--Gundy inequality, H\"older's inequality and Remark \ref{rem:der:growth:poly},  
\begin{align}
E\big|\mathcal{M}^{N, (k)} &  (\kappa_n(s), s)\big|^q \leq K  E \frac{1}{N} \sum_{j=1}^N \Big| \int_{\kappa_n(s)}^s  \partial_{\mu}b^{(k)}_{\kappa_n(s)}\big(X_{\kappa_n(s)}^{i,N,n}, \mu_{\kappa_n(s)}^{X,N,n}, X_{\kappa_n(s)}^{j,N,n}\big)   \tilde{\sigma}^n_{\kappa_n(r)} \big(r,X_{\kappa_n(r)}^{j,N,n}, \mu_{\kappa_n(r)}^{X,N,n}\big) dW_r^{j} \Big|^q \notag
\\
&\leq K  n^{-q/2+1} E \frac{1}{N} \sum_{j=1}^N  \int_{\kappa_n(s)}^s  \big|\partial_{\mu}b^{(k)}_{\kappa_n(s)}\big(X_{\kappa_n(s)}^{i,N,n}, \mu_{\kappa_n(s)}^{X,N,n}, X_{\kappa_n(s)}^{j,N,n}\big)\big|^q  \big| \tilde{\sigma}^n_{\kappa_n(r)} \big(r,X_{\kappa_n(r)}^{j,N,n}, \mu_{\kappa_n(r)}^{X,N,n}\big)\big|^q dr   \notag
\\
&\leq K  n^{-q/2+1}  \frac{1}{N} \sum_{j=1}^N  \int_{\kappa_n(s)}^s E \big| \tilde{\sigma}^n_{\kappa_n(r)} \big(r,X_{\kappa_n(r)}^{j,N,n}, \mu_{\kappa_n(r)}^{X,N,n}\big)\big|^{q} dr   \notag
\end{align}
which on using Corollary \ref{cor:sigma:rate} yields, 
\begin{align}
E\big|\mathcal{M}^{N}   (\kappa_n(s), s)\big|^q  \leq K n^{-q/2} \label{eq:mar2}
\end{align}
for any $q\leq p_0/(\rho/2+1)$. Thus, using the notation defined in equation \eqref{eq:mar3}, 
\begin{align*}
T_{32}  : = & E\big|X_s^{i,N}  - X_{s}^{i,N,n}\big|^{p-2}   \sum_{k=1}^d \big(X_s^{(k),i,N}  - X_{s}^{(k), i,N,n}\big)
\\
& \qquad \times \frac{1}{N} \sum_{j=1}^N  \partial_{\mu}b^{(k)}_{\kappa_n(s)}\big(X_{\kappa_n(s)}^{i,N,n}, \mu_{\kappa_n(s)}^{X,N,n}, X_{\kappa_n(s)}^{j,N,n}\big) \int_{\kappa_n(s)}^s \tilde{\sigma}^n_{\kappa_n(r)} \big(r,X_{\kappa_n(r)}^{j,N,n}, \mu_{\kappa_n(r)}^{X,N,n}\big) dW_r^{j}
\\
= & E\big\{\big|X_s^{i,N}  - X_{s}^{i,N,n}\big|^{p-2}   \sum_{k=1}^d \big(X_s^{i,N}  - X_{s}^{i,N,n}\big)
\\
& \qquad -\big|X_{\kappa_n(s)}^{i,N}  - X_{\kappa_n(s)}^{i,N,n}\big|^{p-2}   \sum_{k=1}^d \big(X_{\kappa_n(s)}^{i,N}  - X_{\kappa_n(s)}^{i,N,n}\big)\big\} \mathcal{M}^{N}(\kappa_n(s), s)
\\
& + E\big|X_{\kappa_n(s)}^{i,N}  - X_{\kappa_n(s)}^{i,N,n}\big|^{p-2}   \big(X_{\kappa_n(s)}^{i,N}  - X_{\kappa_n(s)}^{i,N,n}\big)  \mathcal{M}^{N}(\kappa_n(s), s)
\end{align*}
and then notice that the third term in the above equation vanishes because in view of Lemma \ref{lem:mb:milstein}, $ \mathcal{M}^{N, (k)}(\kappa_n(s), s)$ is a martingale. Thus,  equation \eqref{eq:mvt} yields, 
\begin{align*}
T_{32}\leq & K  E \big\{\big|X_s^{i,N}  - X_{s}^{i,N,n}\big|^{p-2}  +\big|X_{\kappa_n(s)}^{i,N}  - X_{{\kappa_n(s)}}^{i,N,n}\big|^{p-2}  \big\} \big|X_s^{i,N} - X_{\kappa_n(s)}^{i,N} -\big( X_{s}^{i,N,n}   - X_{{\kappa_n(s)}}^{i,N,n}\big)\big|
\\
&  \times \big| \mathcal{M}^N(\kappa_n(s), s) \big|
\end{align*}
for any $s\in[0,T]$, $i\in\{1,\ldots, N\}$ and $n,N\in\mathbb{N}$. Now, replacing $\mathcal{M}(\kappa_n(s),s)$ by $\mathcal{M}^N(\kappa_n(s),s)$ in equation \eqref{eq:T22A+T22B+T22C} and in what follows along with the estimates in equation \eqref{eq:mar2}, one can obtain the following estimates, 
\begin{align}
T_{32} \leq K  \sup_{i\in\{1,\ldots,N\}}\sup_{0\leq r \leq s}E \big|X_r^{i,N}  - X_{r}^{i,N,n}\big|^{p} +K n^{-p} \label{eq:T32}
\end{align}
for any $s\in[0,T]$ and $n,N\in\mathbb{N}$. 

By using methods similar to the one used in estimating $T_{32}$, one also obtains, 
\begin{align}
T_{33}  : = & E\big|X_s^{i,N}  - X_{s}^{i,N,n}\big|^{p-2}   \sum_{k=1}^d \big(X_s^{(k),i,N}  - X_{s}^{(k), i,N,n}\big) \notag
\\
& \qquad \times \frac{1}{N} \sum_{j=1}^N  \partial_{\mu}b^{(k)}_{\kappa_n(s)}\big(X_{\kappa_n(s)}^{i,N,n}, \mu_{\kappa_n(s)}^{X,N,n}, X_{\kappa_n(s)}^{j,N,n}\big) \int_{\kappa_n(s)}^s \tilde{\sigma}^{0,n}_{\kappa_n(r)} \big(r,X_{\kappa_n(r)}^{j,N,n}, \mu_{\kappa_n(r)}^{X,N,n}\big) dW_r^{0} \notag
\\
 \leq K & \sup_{i\in\{1,\ldots,N\}}\sup_{0\leq r \leq s}E \big|X_r^{i,N}  - X_{r}^{i,N,n}\big|^{p} +K n^{-p} \label{eq:T33}
\end{align}
for any $s\in[0,T]$ and $n,N\in\mathbb{N}$.  

Hence, on combining the estimates from equations \eqref{eq:T31}, \eqref{eq:T32} and \eqref{eq:T33}, one obtains
\begin{align}
T_{3} \leq K  \sup_{i\in\{1,\ldots,N\}}\sup_{0\leq r \leq s}E \big|X_r^{i,N}  - X_{r}^{i,N,n}\big|^{p} +K n^{-p} \label{eq:T3}
\end{align}
for any $s\in[0,T]$ and $n,N\in\mathbb{N}$. The proof of equation \eqref{eq:rate:b:inter} is completed by substituting estimates from equations \eqref{eq:T1}, \eqref{eq:T2} and \eqref{eq:T3} in equation \eqref{eq:T1+T2+T3}. 

In order to complete the proof, we consider 
\begin{align*} 
E \big|X_s^{i,N} & - X_{s}^{i,N,n}\big|^{p-2} \big(X_s^{i,N}  - X_{s}^{i,N,n}\big)  \big(b_{s}\big(X_s^{i,N,n},\mu_s^{X,N,n}\big)- b_{\kappa_n(s)}^n \big(X_{\kappa_n(s)}^{i,N,n},\mu_{\kappa_n(s)}^{X,N,n}\big)\big)
\\
=  & E \big|X_s^{i,N}  - X_{s}^{i,N,n}\big|^{p-2} \big(X_s^{i,N}   - X_{s}^{i,N,n}\big)  \big(b_{s}\big(X_s^{i,N,n},\mu_s^{X,N,n}\big)- b_{\kappa_n(s)}\big(X_{s}^{i,N,n},\mu_{s}^{X,N,n}\big)
\\
& +  b_{\kappa_n(s)}\big(X_{s}^{i,N,n},\mu_{s}^{X,N,n}\big)-b_{\kappa_n(s)}\big(X_{\kappa_n(s)}^{i,N,n},\mu_{\kappa_n(s)}^{X,N,n}\big) 
\\
& +b_{\kappa_n(s)}\big(X_{\kappa_n(s)}^{i,N,n},\mu_{\kappa_n(s)}^{X,N,n}\big) - b_{\kappa_n(s)}^n\big(X_{\kappa_n(s)}^{i,N,n},\mu_{\kappa_n(s)}^{X,N,n}\big) \big)
\end{align*}
which on using Young's inequality,   equation \eqref{eq:Taming:milstein}, Assumption \ref{as:lipschitz} and equation \eqref{eq:rate:b:inter}   yields, 
\begin{align*}
E \big|X_s^{i,N} & - X_{s}^{i,N,n}\big|^{p-2} \big(X_s^{i,N}  - X_{s}^{i,N,n}\big)  \big(b_{s}\big(X_s^{i,N,n},\mu_s^{X,N,n}\big)- b_{\kappa_n(s)}^n \big(X_{\kappa_n(s)}^{i,N,n},\mu_{\kappa_n(s)}^{X,N,n}\big)\big)
\\
 \leq & K E \big|X_s^{i,N}  - X_{s}^{i,N,n}\big|^{p} + E\big| b_{s}\big(X_s^{i,N,n},\mu_s^{X,N,n}\big)- b_{\kappa_n(s)}\big(X_{s}^{i,N,n},\mu_{s}^{X,N,n}\big) \big|^p
\\
& + E \big|X_s^{i,N}  - X_{s}^{i,N,n}\big|^{p-2} \big(X_s^{i,N}   - X_{s}^{i,N,n}\big) \big(b_{\kappa_n(s)}\big(X_{s}^{i,N,n},\mu_{s}^{X,N,n}\big)-b_{\kappa_n(s)}\big(X_{\kappa_n(s)}^{i,N,n},\mu_{\kappa_n(s)}^{X,N,n}\big) \big)
\\
& + E\big| b_{\kappa_n(s)}\big(X_{\kappa_n(s)}^{i,N,n},\mu_{\kappa_n(s)}^{X,N,n}\big) - b_{\kappa_n(s)}^n\big(X_{\kappa_n(s)}^{i,N,n},\mu_{\kappa_n(s)}^{X,N,n}\big) \big|^p
\\
\leq & K n^{-p} +  E\Big| b_{\kappa_n(s)}\big(X_{\kappa_n(s)}^{i,N,n},\mu_{\kappa_n(s)}^{X,N,n}\big) - \frac{b_{\kappa_n(s)}\big(X_{\kappa_n(s)}^{i,N,n},\mu_{\kappa_n(s)}^{X,N,n}\big)}{1+n^{-1} \big| X_{\kappa_n(s)}^{i,N,n} \big|^\rho} \Big|^p
\end{align*}
for any $s\in[0,T]$, $i\in\{1,\ldots, N\}$ and $n,N\in\mathbb{N}$. The proof is completed by using Remark \ref{rem:poly:growth}, equation \eqref{eq:w2:mb} and Lemma \ref{lem:mb:milstein}. 
\end{proof}
The following is the main result of this section. 
\begin{thm}[\textbf{Rate of Convergence}] \label{thm:rate:milstein}
Let Assumptions \ref{as:x0}, \ref{as:coercivity}, \ref{as:monotonicity:rate}, \ref{as:polynomial:Lipschitz}, \ref{as:lipschitz}, \ref{as:der:x:poly:lip} and \ref{as:der:mea:poly:lip} be satisfied.  Then, the explicit Milstein-type scheme \eqref{eq:milstein} converges to the true solution of the interacting particle system \eqref{eq:interacting} associated with McKean--Vlasov SDE \eqref{eq:MVSDE} in strong sense with the $L^p$ rate of convergence equal to $1$ i.e.,  
\begin{align*}
\sup_{i\in\{1,\ldots, N\}} E\sup_{t\in[0,T]} |X_t^{i,N}-X_t^{i,N,n}|^p \leq K n^{-p},
\end{align*} 
for any $p< \min\{p_1, p_0/(2\rho+4)\}$, where the constant $K>0$ does not depend on $n, N\in\mathbb{N}$. 
\end{thm}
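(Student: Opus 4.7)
The plan is to mirror the proof architecture of Theorem \ref{thm:rate:euler}: apply It\^o's formula to $|X_t^{i,N}-X_t^{i,N,n}|^p$, split each coefficient difference into a ``same-point'' part (controlled by the monotonicity Assumption \ref{as:monotonicity:rate}) and a ``one-step'' part, and then close via Gronwall. The difference from the Euler case is that the one-step errors must be handled with the refined Milstein-specific lemmas so as to upgrade rate $1/2$ to rate $1$.

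Concretely, first I would subtract \eqref{eq:interacting} from \eqref{eq:milstein} and apply It\^o's formula to $|X_t^{i,N}-X_t^{i,N,n}|^p$. Adding and subtracting $b_s(X_s^{i,N,n},\mu_s^{X,N,n})$, $\sigma_s(X_s^{i,N,n},\mu_s^{X,N,n})$ and $\sigma^0_s(X_s^{i,N,n},\mu_s^{X,N,n})$ splits each integrand into a ``same-point'' difference plus a ``one-step'' remainder. Invoking Assumption \ref{as:monotonicity:rate} (with its constant $p_1$ matched to the Itô coefficient $p-1$, which is why $p<p_1$ is required), taking expectation and using Young's inequality on the cross terms produces
\[
E|X_t^{i,N}-X_t^{i,N,n}|^p \le K\int_0^t E|X_s^{i,N}-X_s^{i,N,n}|^p\, ds + K\int_0^t E\mathcal{W}_2^p(\mu_s^{X,N},\mu_s^{X,N,n})\, ds + J_b+J_\sigma+J_{\sigma^0},
\]
where $J_\sigma$ and $J_{\sigma^0}$ are of the form $E\int_0^t |X_s^{i,N}-X_s^{i,N,n}|^{p-2}|\sigma_s(X_s^{i,N,n},\mu_s^{X,N,n})-\tilde{\sigma}^n_{\kappa_n(s)}(s,X_{\kappa_n(s)}^{i,N,n},\mu_{\kappa_n(s)}^{X,N,n})|^2\,ds$ and analogously for $\sigma^0$, and $J_b$ is the drift counterpart with inner-product structure.

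For $J_\sigma$ and $J_{\sigma^0}$, Young's inequality separates the two factors; the first becomes an $E|X_s^{i,N}-X_s^{i,N,n}|^p$ term to be absorbed into the Gronwall sum, while the second is precisely $E|\sigma_s-\tilde{\sigma}^n|^p\le Kn^{-p}$ by Lemma \ref{lem:sigma-sigma:milstein}. This is where the Milstein correction $\Gamma^{n,\sigma}$ pays off: it was designed so that a two-variable Taylor expansion of $\sigma$ along the scheme trajectory, combined with the Lions'-derivative expansion in Lemma \ref{lem:f:rate:local}, leaves only $O(n^{-1})$ residuals.

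The drift term $J_b$ is the main obstacle, since a naive $L^p$ estimate of $b_s(X_s^{i,N,n},\mu_s^{X,N,n})-b^n_{\kappa_n(s)}(X_{\kappa_n(s)}^{i,N,n},\mu_{\kappa_n(s)}^{X,N,n})$ gives only $O(n^{-p/2})$ via Lemma \ref{lem:b-b} and would yield rate $1/2$. The rescue is Lemma \ref{lem:b-b:x}, which exploits the inner-product pairing with $X_s^{i,N}-X_s^{i,N,n}$: one expands $b_{\kappa_n(s)}(X_s^{i,N,n},\mu_s^{X,N,n})-b_{\kappa_n(s)}(X_{\kappa_n(s)}^{i,N,n},\mu_{\kappa_n(s)}^{X,N,n})$ to first order via Lemma \ref{lem:f:rate:local}, substitutes the scheme increments $X_s^{i,N,n}-X_{\kappa_n(s)}^{i,N,n}$ from \eqref{eq:milstein}, and uses that the stochastic-integral pieces are martingales whose expectation against $|X_{\kappa_n(s)}^{i,N}-X_{\kappa_n(s)}^{i,N,n}|^{p-2}(X_{\kappa_n(s)}^{i,N}-X_{\kappa_n(s)}^{i,N,n})$ vanishes; the remainder from replacing $s$-values by $\kappa_n(s)$-values is controlled via the elementary inequality $||x|^{p-2}x-|y|^{p-2}y|\le K(|x|^{p-2}+|y|^{p-2})|x-y|$, Lemmas \ref{lem:one-step:rate:milstein}, \ref{lem:sigma-sigma:milstein}, \ref{lem:b-b} and moment bounds from Lemma \ref{lem:mb:milstein}. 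The net result is $J_b\le Kn^{-p}+K\int_0^t\sup_{i}\sup_{r\le s}E|X_r^{i,N}-X_r^{i,N,n}|^p\,ds$.

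Finally, bounding $\mathcal{W}_2^p(\mu_s^{X,N},\mu_s^{X,N,n})\le N^{-1}\sum_j|X_s^{j,N}-X_s^{j,N,n}|^p$ and taking $\sup_{i\in\{1,\ldots,N\}}\sup_{r\le t}$ on both sides yields a Gronwall-type inequality with driver $Kn^{-p}$; Gronwall's lemma gives $\sup_i\sup_{t\le T}E|X_t^{i,N}-X_t^{i,N,n}|^p\le Kn^{-p}$. Lemma \ref{lem:gk} (applied with the stopping-time bound at any $\tau\le T$) then promotes this to $E\sup_{t\le T}$. The range $p<\min\{p_1,p_0/(2\rho+4)\}$ is dictated by the monotonicity constant from Assumption \ref{as:monotonicity:rate} and by the highest moment used inside Lemmas \ref{lem:sigma-sigma:milstein}, \ref{lem:b-b:x}.
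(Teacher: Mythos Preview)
Your proposal is correct and follows essentially the same route as the paper: It\^o's formula on $|X_t^{i,N}-X_t^{i,N,n}|^p$, split via Assumption \ref{as:monotonicity:rate}, bound the diffusion one-step terms by Lemma \ref{lem:sigma-sigma:milstein}, the drift inner-product term by Lemma \ref{lem:b-b:x}, absorb the Wasserstein term via \eqref{eq:w2:particle}, apply Gronwall, and finish with Lemma \ref{lem:gk}. Your explanation of why Lemma \ref{lem:b-b:x} is needed (the martingale cancellation against the $\kappa_n(s)$-frozen factor, together with the mean-value bound on $|x|^{p-2}x$) matches the paper's mechanism exactly.
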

\begin{proof}
The proof follows by applying arguments used in the proof of Theorem \ref{thm:rate:euler}. Indeed, one replaces equation \eqref{eq:true-euler} by the following equation, 
\begin{align*}
X_t^{i,N} & - X_t^{i,N, n} =  \int_0^t \big( b_s\big(X_s^{i,N}, \mu_s^{X,N}\big) - b_{\kappa_n(s)}^n \big(X_{\kappa_n(s)}^{i,N, n},  \mu_{\kappa_n(s)}^{X,N,n}\big) \big) ds  
\\
&+ \int_0^t \big(\sigma_s\big(X_s^{i,N}, \mu_s^{X,N}\big) -\tilde{\sigma}_{\kappa_n(s)}^n\big(s, X_{\kappa_n(s)}^{i,N,n}, \mu_{\kappa_n(s)}^{X,N,n}\big) \big) dW_s^{i} 
\\
&+ \int_0^t \big(\sigma^0_s\big(X_s^{i,N}, \mu_s^{X,N}\big) - \tilde{\sigma}^{0,n}_{\kappa_n(s)}\big(s, X_{\kappa_n(s)}^{i,N,n}, \mu_{\kappa_n(s)}^{X,N,n}\big)  \big)dW_s^{0} 
\end{align*}
and thus equation \eqref{eq:ito:est:euler} is replaced by, 
\begin{align*}
 E\big|&X_t^{i,N}  - X_t^{i,N, n} \big|^p \leq K E\int_0^t \big|X_s^{i,N}  - X_s^{i,N, n} \big|^{p}  ds 
\\
& + K E\int_0^t \big|X_s^{i,N}  - X_s^{i,N, n} \big|^{p-2}  \big(X_s^{i,N}  - X_s^{i,N, n} \big) \big(b_s \big(X_s^{i,N, n},  \mu_s^{X,N, n}\big) - b_{\kappa_n(s)}^n \big(X_{\kappa_n(s)}^{i,N, n},  \mu_{\kappa_n(s)}^{X,N,n}\big) \big) ds  
\\
&+K E \int_0^t   \big|\sigma_s\big(X_s^{i,N, n}, \mu_s^{X,N, n}\big) -\tilde{\sigma}_{\kappa_n(s)}^n\big(X_{\kappa_n(s)}^{i,N,n}, \mu_{\kappa_n(s)}^{X,N,n}\big) \big|^p ds 
\\
&+ K E\int_0^t    \big| \sigma^0_s\big(X_s^{i,N, n}, \mu_s^{X,N, n}\big)  -\tilde{\sigma}^{0,n}_{\kappa_n(s)}\big(X_{\kappa_n(s)}^{i,N,n}, \mu_{\kappa_n(s)}^{X,N,n}\big)  \big|^p ds 
\end{align*}
for any $t\in[0,T]$, $i\in\{1,\ldots,N\}$ and $n, N\in\mathbb{N}$. The application of Lemmas \ref{lem:sigma-sigma:milstein} and  \ref{lem:b-b:x} yields, 
\begin{align*}
\sup_{i\in\{1,\ldots,N\}}\sup_{0\leq r \leq t}  E\big|X_r^{i,N}  - X_r^{i,N, n} \big|^p \leq K \int_0^t \sup_{i\in\{1,\ldots,N\}}\sup_{0\leq r \leq s}E \big|X_r^{i,N}  - X_r^{i,N, n} \big|^{p}  ds  + K n^{-p},
\end{align*}
for any $t\in[0,T]$, $i\in\{1,\ldots,N\}$ and $n, N\in\mathbb{N}$. The use of Gronwall's inequality yields, 
\begin{align*}
\sup_{i\in\{1,\ldots,N\}}\sup_{0\leq t \leq T}   E\big|X_t^{i,N}  - X_t^{i,N, n} \big|^p \leq K n^{-p},
\end{align*}
for any $p< \min\{p_1, p_0/(2\rho+4)\}$, $t\in[0,T]$, $i\in\{1,\ldots,N\}$ and $n, N\in\mathbb{N}$. Then, the result follows by  Lemma \ref{lem:gk}.
\end{proof}

\section{Numerical Results}\label{Sec:4}

The aim of this section is to demonstrate the practical performance of the schemes proposed in this article. 
To approximate the law $\mathcal{L}_{X_{t_n}}$ (or the conditional law $\mathcal{L}^{1}_{X_{t_n}}$) at each time-step $t_n$ contained in a uniform time-grid on $[0,T]$, we use a standard particle method with $N$ particles for each realisation of $W^0$. For our numerical experiments, we used $N=10^3$ to approximate the conditional law (e.g., a conditional expectation) and perform $100$ independent outer simulations over $\Omega^{0}$, in order to  estimate the $L^p$-error on the product space. This resembles in our last example the estimation of nested expectations, as e.g.\ in \cite{bujok2015} or the survey paper \cite{giles} and references given therein, and suggests further research concerning efficient multi-level Monte Carlo methods for estimation with conditional laws.

Since the exact solution of the examples considered below is not known, we determined the strong convergence rates, in terms of time-steps, by comparing two numerical solutions (at time $T=1$) obtained from simulations based on a fine and coarse time-grid, respectively. 
To obtain a coupling between these two levels, the same Brownian motions are used for both. 
In order to test the strong convergence in $h$, we thus compute the $L^p$-error, for $p=2,4,6$, denoted by $\text{RMSE}$, 
\begin{equation*}
\text{RMSE}:= \left( \frac{1}{N} \sum_{i=1}^{N} \left|X^{i,N,l}_T - X^{i,N,l-1}_T \right|^p \right)^{1/p},
\end{equation*}
where $X^{i,N,l}_T$ denotes the numerical solution of $X$ at time $T$ computed with $N$ particles and $2^l$ time-steps, where $l \geq 1$.

This section numerically illustrates the convergence of the tamed Euler and Milstein schemes for interacting particle systems, with and without common noise.

To demonstrate numerically the performance of our proposed tamed Euler scheme, we present the following McKean--Vlasov equation, which is a mean-field version of the well-known $3/2$-model that is often used for pricing VIX options and modelling certain stochastic volatility processes: \\ \\
\textbf{Example 1. (Mean-field $3/2$ Stochastic Volatility Model).}   
Consider the $2$-dimensional McKean--Vlasov SDE
\begin{align*}
X_t = X_0 + \int_{0}^{t} \left( \lambda X_s(\mu - |X_s|) + E X_s \right) ds + \int_{0}^{t} \xi |X_s|^{3/2} dW_s,
\end{align*}
where we choose $X_0= [1,1]^T$, $\lambda=2.5$, $\mu=1$ and 
\[
   \xi=
  \left[ {\begin{array}{cc}
   2/\sqrt{10} & 1/\sqrt{10} \\
   1/\sqrt{10} & 2/\sqrt{10} \\
  \end{array} } \right].
\]
The above model without the mean-field has been studied numerically in \cite{sabanis2016}.
Fig.\ \ref{fig:SuperlinearDiffusion1a} depicts the strong convergence rate for this example and we observe a rate of order $1/2$. 

To illustrate the convergence behaviour of our proposed tamed Milstein scheme, we consider the following $1$-dimensional example which has been studied numerically in \cite{ beyn2017, kumar2019}.  \\ \\ 
\textbf{Example 2. (Mean-field Stochastic Double Well Dynamics).} Consider the $1$-dimensional McKean--Vlasov SDE
\begin{align*}
X_t = X_0 + \int_{0}^{t} \left( X_s(1-X_s^2) + E X_s \right) ds + \int_{0}^{t} \sigma (1 -X_s^2) dW_s,
\end{align*}
with $X_0=1$ and $\sigma = 0.3$. We recall that the $L$-derivative terms appearing in the Milstein scheme (\ref{eq:milstein}) are not implemented, as these terms are expected to be close to zero for a large number of particles. However, for a small number of particles (e.g., $5$ or $10$), they have to be considered to obtain a convergence rate of order $1$; see \cite{bao2020} for a more rigorous discussion on the numerical role of the $L$-derivative terms (in the case of zero common noise).
\begin{figure}[!h]
\begin{subfigure}[b]{0.48\textwidth}
\includegraphics[width=\textwidth]{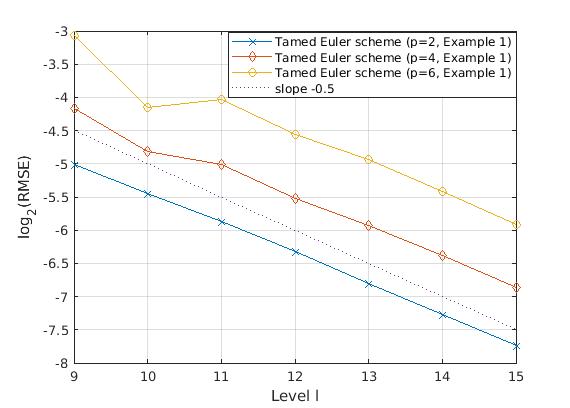}
\caption{}
\label{fig:SuperlinearDiffusion1a}
\end{subfigure}
\begin{subfigure}[b]{0.48\textwidth}
\includegraphics[width=\textwidth]{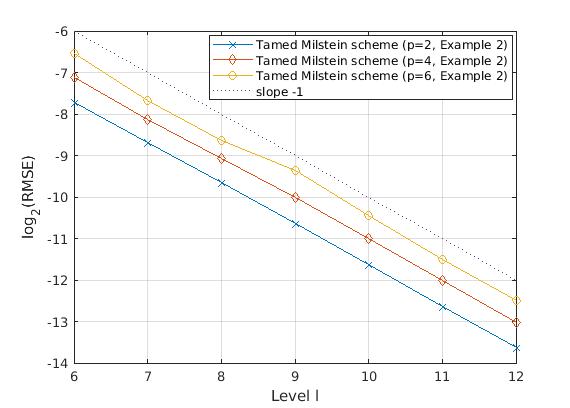}
\caption{}
\label{fig:SuperlinearDiffusion2}
\end{subfigure}
\caption{Left: Strong convergence of the tamed Euler scheme for Example 1.
Right: Strong convergence of the tamed Milstein scheme for Example 2.}
\end{figure}
Fig.\ \ref{fig:SuperlinearDiffusion2} and Fig.\ \ref{fig:SuperlinearDiffusion3} reveal the expected strong convergence rate of order 1. \\ \\
\textbf{Example 3. (Mean-field Stochastic Double Well Dynamics with Common Noise).} Consider the $1$-dimensional McKean--Vlasov SDE
\begin{align*}
X_t = X_0 + \int_{0}^{t} \left( X_s(1-X_s^2) + E^1 X_s \right) ds + \int_{0}^{t} \sigma (1 -X_s^2) dW_s + \int_{0}^{t} \sigma (1 -X_s^2) dW_s^{0},
\end{align*}
with $X_0=1$ and $\sigma = 0.1$. This example is a slight modification of Example 2 and involves additionally a common noise term. We remark that in this case iterated stochastic integrals (the Brownian motion $W$ integrated against $W^{0}$ and the other way around) appear, but due to the antisymmetry property of the L\'{e}vy area these terms will cancel.
\begin{figure}[!h]
\centering
\includegraphics[width=0.48\textwidth]{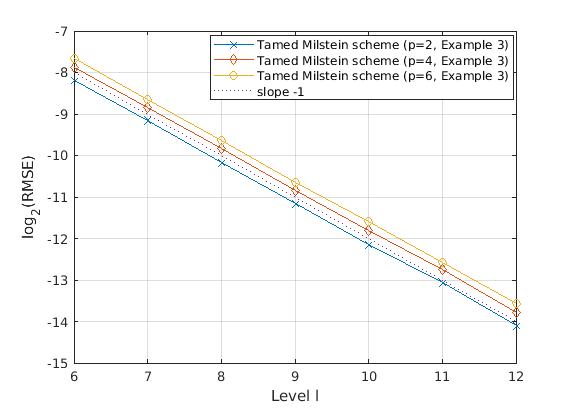}
\caption{Strong convergence of the tamed Milstein scheme for Example 3.}
\label{fig:SuperlinearDiffusion3}
\end{figure}

\section*{acknowledgement}
We would like to thank David \v{S}i\v{s}ka and Lukasz Szpruch, both from University of Edinburgh, for allowing us to use the above mentioned Lemma~\ref{lem:DL} from their working paper which played a crucial role in the proof of Theorem \ref{thm:eu}. 


\subsection*{Authors' addresses}\hfill\\

\noindent Chaman Kumar, Department of Mathematics, Indian Institute of Technology Roorkee,  Roorkee, 247 667, India.  \\{\tt 
chaman.kumar@ma.iitr.ac.in}\\

\noindent Neelima, Department of Mathematics, Ramjas College, University of Delhi, Delhi, 110 007, India. \\{\tt 
neelima\_maths@ramjas.du.ac.in} \\

\noindent Christoph Reisinger, Mathematical Institute, University of Oxford.
Andrew Wiles Building, Woodstock Road, Oxford, OX2 6GG, UK. \\{\tt christoph.reisinger@maths.ox.ac.uk}\\

\noindent Wolfgang Stockinger, Mathematical Institute, University of Oxford.
Andrew Wiles Building, Woodstock Road, Oxford, OX2 6GG, UK. \\{\tt wolfgang.stockinger@maths.ox.ac.uk}\\

\end{document}